\DeclareMathOperator\ess{ess}
\DeclareMathOperator\supp{supp}
\DeclareMathOperator\conv{conv} %
\begin{document}



\section{Introduction and main result}
In a recent work \cite{PTpattern}, we were interested in continuous paths of length $1$ in Brownian motion $(B_t;t \geq 0)$. We proved that Brownian meander $m$ and the three-dimensional Bessel process $R$ can be embedded into Brownian motion by a random translation of origin in spacetime, while it is not the case for either normalized Brownian excursion $e$ or reflected Brownian bridge $|b^0|$. The following question was left:
\begin{question}
\label{q1}
Can we find a random time $T \geq 0$ such that $(B_{T+u}-B_T; 0 \leq u \leq 1)$ has the same distribution as standard Brownian bridge $(b_{u}^0;0 \leq u \leq 1)$?
\end{question}

As a natural candidate, the bridge-like process as below was considered:
\begin{equation}
\label{bridgelike}
(B_{F+u}-B_F; 0 \leq u \leq 1), 
\end{equation}
where
\begin{equation}
\label{bridgeT}
F:=\inf\{t \geq 0; B_{t+1}-B_t=0\}.
\end{equation}

This bridge-like process bears some resemblance to Brownian bridge. At least, it starts and ends at $0$,
and is some part of a Brownian path in between. This leads us to the following question: 
\begin{question}~
\label{opbr}
\begin{enumerate}
\item
\label{op411}
Is the bridge-like process defined as in \eqref{bridgelike} standard Brownian bridge?
\item
\label{op412}
If not, is the distribution of standard Brownian bridge absolutely continuous with respect to that of the bridge-like process?
\end{enumerate}
\end{question}

Let $\mathcal{C}_0[0,1]$ be the set of continuous paths $(w_t; 0 \leq t \leq 1)$ starting at $w_0=0$, and $\mathcal{B}$ be the Borel $\sigma-$field of $\mathcal{C}_{0}[0,1]$. To provide a context for the above questions, we observe that
\begin{equation}
\label{MWT}
F:=\inf\{t \geq 0; X_t \in \mathcal{BR}^0\},
\end{equation}
where 
\begin{equation}
\label{mmm}
X_t:=(B_{t+u}-B_t; 0 \leq u \leq 1) \quad \mbox{for}~ t \geq 0,
\end{equation}
is the moving-window process associated to Brownian motion $(B_t; t \geq 0)$, and 
$$
\mathcal{BR}^0:=\{w \in \mathcal{C}_0[0,1]; w(1)=0\}
$$ is the set of bridges with endpoint $0$. Note that the moving-window process $X$ is a stationary Markov process, with transition kernel $P_t: (\mathcal{C}_0[0,1],\mathcal{B}) \rightarrow (\mathcal{C}_0[0,1],\mathcal{B})$ for $t \geq 0$ given by
$$
P_t(w,d\widetilde{w}) =     \left\{ \begin{array}{ccl}          \mathbb{P}^{{\bf W}}(d\widetilde{w})& \mbox{if}         & t \geq 1, \\ 1(\widetilde{w}= (w_{t+u}-w_t;u \leq 1-t) \otimes \widetilde{w}')\mathbb{P}^{\bf W_t}(d\widetilde{w}')\  & \mbox{if} & t <1, \\                \end{array}\right.
$$
where $\mathbb{P}^{{\bf W}}$ (resp. $\mathbb{P}^{\bf W_t}$) is Wiener measure on $\mathcal{C}_0[0,1]$ (resp. $\mathcal{C}_0[0,t]$), and $\otimes$ is the usual path concatenation.
Note that $\mathbb{P}^{\bf W}$ is invariant with respect to $(P_t; t \geq 0)$. 
Moreover, $X_{t+l}$ and $X_t$ are independent for all $t \geq 0$ and $l \geq 1$.

For a suitably nice continuous-time Markov process $(Z_t; t \geq 0)$, there have been extensive studies on the post-$T$ process $(Z_{T+t}; t \geq 0)$ with some random time $T$ which is
\begin{itemize}
\item
a stopping time, see e.g. Hunt \cite{Hunt} for Brownian motion, Blumenthal \cite{Blumenthal}, and Dynkin and Jushkevich \cite{DJ} for general Markov processes;
\item
an honest time, that is the time of last exit from a predictable set, see e.g. Meyer et al. \cite{MSW}, Pittenger and Shih \cite{PSbis,PS}, Getoor and Sharpe \cite{GetSh,GetShtri,GetShbis}, Maisonneuve \cite{Maison} and Getoor \cite{Getoorsplit};
\item
the time at which $X$ reaches its ultimate minimum, see e.g. Williams \cite{Williams2} and Jacobsen \cite{Jacob} for diffusions, Pitman \cite{Pitmantech} for conditioned Brownian motion and Millar \cite{Millarbis,Millar} for general Markov processes.
\end{itemize}

The problem is related to decomposition/splitting theorems of Markov processes. We refer readers to the survey of Millar \cite{Millarsurvey}, which contains a unified approach to most if not all of the above cases. See also Pitman \cite{PitmanLevy} for a presentation in terms of point processes and further references. Moreover, if $Z$ is a semi-martingale and $T$ is an honest time, the semi-martingale decomposition of the post-$T$ process was investigated in the context of progressive enlargement of filtrations, by Barlow \cite{Barlow}, Yor \cite{Yor}, Jeulin and Yor \cite{JeulinYor} and in the monograph of Jeulin \cite{Jeulin}. The monograph of Mansuy and Yor \cite{MY} offers a survey of this theory.

The study of the bridge-like process is challenging, because the random time $F$ as in \eqref{bridgeT} does not fit into any of the above classes. We even do not know whether this bridge-like process is Markov, or whether it enjoys the semi-martingale property. Note that if the answer to \eqref{op412} of Question \ref{opbr} is positive, then we can apply {\em Rost's filling scheme} \cite{CO,Rost1} as in Pitman and Tang \cite[Section $3.5$]{PTpattern} to sample Brownian bridge from a sequence of i.i.d. bridge-like processes in Brownian motion by iteration of the construction \eqref{bridgelike}. While we are unable to answer either of the above questions about the bridge-like process, we are able to settle Question \ref{q1}.
\begin{theorem}
\label{mainbis}
There exists a random time $T \geq 0$ such that $(B_{T+u}-B_T; 0 \leq u \leq 1)$ has the same distribution as $(b^0_u; 0 \leq u \leq 1)$.
\end{theorem}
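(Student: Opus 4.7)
The plan is to identify Brownian bridge as a Palm distribution on the zeros of the Slepian process $S_t = B_{t+1}-B_t$, and then to realize this distribution as the law of $X_T$ for an explicit random time $T$ via rejection sampling on independent blocks of Brownian motion.

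First I would establish the key Palm identity. Let $L=(L_t)_{t\ge 0}$ denote the local time of $S$ at $0$, defined through the occupation density formula
\[
\int_0^a h(X_t)\,dL_t \;=\; \lim_{\varepsilon\downarrow 0}\frac{1}{2\varepsilon}\int_0^a h(X_t)\,\mathbf{1}_{|S_t|\le\varepsilon}\,dt.
\]
Since $X_0=(B_u;0\le u\le 1)$ is standard Brownian motion and $S_0=B_1\sim\mathcal{N}(0,1)$, with the conditional law of $X_0$ given $S_0=0$ being the standard Brownian bridge, stationarity of the pair $(X_t,S_t)$ together with the Gaussian density of $S_0$ at the origin yields
\[
E\Bigl[\int_0^a h(X_t)\,dL_t\Bigr] \;=\; \frac{a}{\sqrt{2\pi}}\,E[h(b^0)]
\]
for every bounded measurable $h$ on $\mathcal{C}_0[0,1]$. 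This identifies the standard Brownian bridge law as the Palm distribution of the moving window $X$ on the Slepian zero set.

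Next I would turn this into an honest random time. Using the observation recorded in the excerpt that $X_{s+l}\perp X_s$ for $l\ge 1$, partition $[0,\infty)$ into the unit-separated blocks $I_k=[2k,2k+1]$; the triples $(X\vert_{I_k},S\vert_{I_k},L\vert_{I_k})$ are then i.i.d.\ across $k$. Let $\ell_k=L_{2k+1}-L_{2k}$ and, on an enlarged probability space and conditionally on the $k$th block, draw a point $T_k\in I_k$ with density $(1/\ell_k)\,dL_t$ on the event $\{\ell_k>0\}$ (using an auxiliary independent uniform variable). The block version of the Palm identity reads $E[\ell_k\,h(X_{T_k})]=(2\pi)^{-1/2}\,E[h(b^0)]$. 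A von Neumann rejection that selects a random block $K$ with probability proportional to $\ell_K$ and sets $T=T_K$ then produces a random time with $X_T$ exactly equal in law to $b^0$.

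The main obstacle is that the size-biasing weight $\ell_k$ is unbounded, so the rejection cannot be implemented with a deterministic majorant. To handle this I would either (i) truncate $\ell_k\mapsto \ell_k\wedge M$, run rejection with acceptance probability $(\ell_k\wedge M)/M$, couple the schemes across $M\to\infty$, and use Borel--Cantelli and uniform integrability of $\ell_1$ to extract an a.s.\ finite random time with the exact bridge distribution in the limit; or (ii) carry out the rejection in the random-walk approximation indicated in the keywords, where the analogous event of a block summing to $0$ has strictly positive probability so that the classical rejection scheme produces an honest stopping time, and then transfer the conclusion to Brownian motion via a scaling limit. Proving that the terminated construction yields exactly, rather than only weakly, the Brownian bridge law is the critical step.
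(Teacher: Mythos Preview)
Your Palm identity is exactly the key insight the paper uses: the local time $L$ of the Slepian process at $0$ gives a stationary random measure whose Palm distribution, projected to time $0$, is the Brownian bridge law (this is the paper's Proposition~\ref{Revuz}). So the first half of your plan matches the paper precisely.

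The gap is in the second half. You correctly identify that von Neumann rejection with weight $\ell_k$ fails because $\ell_k$ is unbounded, but neither of your proposed fixes closes the gap. In option~(i), truncating to $\ell_k\wedge M$ produces, for each $M$, a random time $T^{(M)}$ whose law on $\mathcal{C}_0[0,1]$ converges to the bridge law in total variation as $M\to\infty$; but there is no mechanism by which a limit of the random times $T^{(M)}$ themselves yields a single a.s.\ finite $T$ with \emph{exactly} the bridge distribution --- Borel--Cantelli and uniform integrability give you control on the approximating laws, not an exact sampler. In option~(ii), passing through a random-walk approximation and a scaling limit again only yields weak convergence of the embedded increments, not an exact embedding in the limiting Brownian motion. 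So as written the proof is incomplete at precisely the step you flag as critical.

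The paper sidesteps the unbounded-weight issue in two different ways. The non-constructive route invokes Thorisson's abstract shift-coupling criterion: the Palm probability measure $\mathbb{P}_\Gamma$ and the law of $\widehat{X}$ agree on the shift-invariant $\sigma$-field (which is trivial by a zero--one law), so some random shift $\widehat{T}\in\mathbb{R}$ realizes $\mathbb{P}_\Gamma$; a separate argument then upgrades this two-sided time to a nonnegative one, and it is only \emph{there} that a rejection/filling scheme appears --- with a density bounded by $2$, so classical rejection applies. The constructive route (due to Thorisson) avoids rejection entirely: set $T:=\inf\{t>0:\Gamma[0,t]=t\}$ and use the Last--M\"orters--Thorisson theory of balancing allocation rules to show directly that $\widehat{X}_T$ is a Brownian bridge. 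Either of these would replace the unresolved step in your argument; your i.i.d.-block framework is a natural setting, but it does not by itself produce a size-biased pick when the bias is unbounded.
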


In terms of the moving-window process, it is equivalent to find a random time $T \geq 0$ such that $X_T$ has the same distribution as Brownian bridge $b^0$. As mentioned in Pitman and Tang \cite{PTpattern}, this is a variant of the {\em Skorokhod embedding problem} for the $\mathcal{C}_0[0,1]$-valued process $X$ and a random time $T$. Our proof relies on Last and Thorisson's 
construction 
\cite{LT2014}
of the {\em Palm measure} of local times of the moving-window process. The idea of embedding {\em Palm/Revuz measures} arose earlier in the work of Bertoin and Le Jan \cite{BertoinLeJan}, and the connection between Palm measures and Markovian bridges was made by Fitzsimmons et al. \cite{FPY}. The existence of local times stems from the Brownian structure of the zero set of the {\em Slepian process} $S_t:=B_{t+1} - B_t$ for $t \geq 0$, which is introduced in Section \ref{slepiansec}. See e.g. Theorem \ref{main} and Lemma \ref{abszero}.

In the proof of Theorem \ref{mainbis}, we make use of an abstract existence result of Thorisson \cite{Th2}, see e.g. Theorem \ref{thts}. As a consequence, our method gives little clue on the construction of the random time $T$. However, while the paper was under review, we learned from Hermann Thorisson \cite{HTP} an explicit embedding of Brownian bridge into Brownian motion by a spacetime shift. His argument is mainly from Last et al. \cite{LMT}, that is to construct allocation rules balancing stationary diffuse random measures on the real line. In particular, they were able to characterize {\em unbiased shifts} of Brownian motion, those are random times $T \in \mathbb{R}$ such that $(B_{T+u}-B_T; u \geq 0)$ is a two-sided Brownian motion, independent of $B_T$. Though it appears to be easier, Thorisson's constructive proof relies on a deep and powerful theory. As the two proofs of Theorem \ref{mainbis} are of independent interest, we present both ours in Subsections \ref{32} and \ref{33}, and Thorisson's in Subsection \ref{34}.

We conclude this introductory part by reviewing related literature. Question \ref{q1} is closely related to the notion of {\em shift-coupling}, initiated by Aldous and Thorisson \cite{AT}, and Thorisson \cite{Th1}. General results of shift-coupling were further developed by Thorisson \cite{Th1995,Th2, Th1999}, see also the book of Thorisson \cite{Thbook}. In the special cases of a family of i.i.d. Bernoulli random variables indexed by $\mathbb{Z}^d$ or a spatial Poisson process on $\mathbb{R}^d$, Liggett \cite{Liggett}, and Holroyd and Liggett \cite{HL} provided an explicit construction of the random shift and computed the tail of its probability distribution. Two continuous processes $(Z_u; u \geq 0)$ and $(Z'_u; u \geq 0)$ are said to be shift-coupled if there are random times $T,T' \geq 0$ such that $(Z_{T+u}; u \geq 0)$ has the same distribution as $(Z'_{T'+u}; u \geq 0)$. From Theorem \ref{mainbis}, we know that $Z:=X$, the moving-window process can be shift-coupled with some $\mathcal{C}_0[0,1]$-valued process $Z'$ starting at $Z'_0:=b^0$ for random times $T \geq 0$ and $T'=0$. 

More recently, Hammond et al. \cite{HPS} constructed local times on the exceptional times of two dimensional dynamical percolation. Further, they showed that at a typical time with respect to local times, the percolation configuration has the same distribution as {\em Kesten's Incipient Infinite Cluster} \cite{Kestenperco}. They also made use of Palm theory and the  idea was similar in spirit to ours, though the framework is completely different. In a study of {\em forward Brownian motion}, Burdzy and Scheutzow \cite{Burdzy} asked whether a concatenation of independent pieces of Brownian paths truncated at stopping times forms Brownian motion. They showed that if these Brownian pieces are i.i.d. and the expected stopping times are finite, then Brownian motion is achieved by patchwork. The general case where the Brownian pieces are not identically distributed, is left open.\\\\
\textbf{Organization of the paper:} The rest of this paper is organized as follows.
\begin{itemize}
\item 
In Section \ref{rwsec}, we present some analysis of random walks related to Question \ref{opbr}. 
\item
In Section \ref{slepiansec}, after recalling some results for the Slepian process due to Slepian \cite{Slepian} and Shepp \cite{Shepp}, we provide a path decomposition for the Slepian process on $[0,1]$, Theorem \ref{main}.
\item
In Section \ref{2}, we explore the local structure of the Slepian zero set $\{t \in [0,1]; S_t=0\}$, or $\{t \in [0,1]; X_t \in \mathcal{BR}^0\}$. In particular, Theorem \ref{main} is proved in Section \ref{22}.
\item
In Section \ref{3}, after presenting essential background on Palm theory of stationary random measures, we give two proofs of Theorem \ref{mainbis}. The constructive proof in Subsection \ref{34} is due to Hermann Thorisson.
\end{itemize}
\section{Random walk approximation}
\label{rwsec}
In this section, we consider the discrete analog of the bridge-like process. Namely, for an even positive integer $n$, we run a simple symmetric random walk $(RW_k)_{k \in \mathbb{N}}$ until the first level bridge of length $n$ appears. That is, we consider the process
\begin{equation}
\label{disbl}
(RW_{F_n+k}-RW_{F_n})_{0 \leq k \leq n}, \quad \mbox{where}~F_n:=\inf\{k \geq 0; RW_{k+n}=RW_k\}.
\end{equation}
The following invariance principle is proved in Pitman and Tang \cite[Proposition $2.4$]{PTpattern}.
\begin{proposition}
\label{PT2014}
\cite{PTpattern}
The distribution of the process
\label{scaling}
$$\left(\frac{RW_{F_n+nu}-RW_{F_n}}{\sqrt{n}}; 0 \leq u \leq 1 \right)
$$
where the walk is defined by linear interpolation between integer times, converges weakly to the distribution of the bridge-like process as in \eqref{bridgelike}.
\end{proposition}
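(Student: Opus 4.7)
\textbf{Proof plan for Proposition \ref{PT2014}.}

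My plan is to derive this invariance principle from Donsker's theorem combined with almost-sure convergence of the hitting times $F_n/n \to F$, where $F$ is the first zero of the Slepian process as in \eqref{bridgeT}. First I would invoke the Skorokhod representation theorem to couple, on a single probability space, a standard Brownian motion $(B_t, t \geq 0)$ with the linearly interpolated rescaled walks $\widetilde B^{(n)}_t$ so that $\widetilde B^{(n)} \to B$ uniformly on every compact time interval almost surely. The associated discrete Slepian processes $S^{(n)}_t := \widetilde B^{(n)}_{t+1} - \widetilde B^{(n)}_t$ then converge uniformly on compacts to $S_t = B_{t+1} - B_t$ almost surely.

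The crux is to establish $F_n/n \to F$ almost surely under this coupling. The bound $\liminf F_n/n \geq F$ is immediate: by continuity, $S$ is bounded away from zero on $[0, F - \veps]$, hence so is $S^{(n)}$ for all sufficiently large $n$. For the reverse bound $\limsup F_n/n \leq F$, one must show that $S$ takes values of both signs in every right-neighborhood of $F$. The key observation is that $F+1$ is a stopping time of the natural filtration $(\mathcal{F}_t)$ of $B$, since $\{F+1 \leq t\} = \{F \leq t-1\} \in \mathcal{F}_t$. By the strong Markov property, $W_h := B_{F+1+h} - B_{F+1}$ is a standard Brownian motion in $h$ independent of $\mathcal{F}_{F+1}$, while $\phi(h) := B_{F+h} - B_F$ is $\mathcal{F}_{F+1}$-measurable and continuous with $\phi(0) = 0$. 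Since for $0 \leq h < 1$,
\[
S_{F+h} \;=\; W_h - \phi(h),
\]
a Blumenthal $0$--$1$ argument applied to $W$ shows that $W$ crosses the random continuous curve $\phi$ in every right-neighborhood of the origin almost surely, yielding the required sign-crossing property and hence $\limsup F_n/n \leq F$.

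Combining $F_n/n \to F$ with the uniform convergence of $\widetilde B^{(n)}$ on a compact neighborhood of $F+1$ and the uniform continuity of $B$ on that interval yields
\[
\sup_{0 \leq u \leq 1}\Bigl|\bigl(\widetilde B^{(n)}_{F_n/n + u} - \widetilde B^{(n)}_{F_n/n}\bigr) - \bigl(B_{F+u} - B_F\bigr)\Bigr| \;\longrightarrow\; 0 \quad \mbox{almost surely},
\]
so almost-sure (hence weak) convergence in $\mathcal{C}_0[0,1]$ follows. The main obstacle is the sign-crossing property at $F$: hitting times of closed sets are in general not continuous under uniform convergence, and one must exclude the pathological scenario in which $S$ merely touches zero at $F$ from one side. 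The post-$(F+1)$ Brownian increments are what inject the necessary oscillation into $S$ just beyond $F$, turning what would otherwise be a delicate continuity question into a routine consequence of the strong Markov property.
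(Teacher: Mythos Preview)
Your overall strategy---couple the rescaled walks to Brownian motion and show $F_n/n \to F$ almost surely---is exactly the route indicated in the paper (and carried out in the cited reference \cite{PTpattern}): one proves that the functional $w \mapsto \inf\{t \geq 0: w_{t+1} = w_t\}$ is $\mathbb{P}^{\bf W}$-a.s.\ continuous, whence the invariance principle follows from Donsker. Your observation that $F+1$ is a stopping time of the Brownian filtration, so that $W_h := B_{F+1+h}-B_{F+1}$ is a fresh Brownian motion independent of $\mathcal{F}_{F+1}$, is correct and is a natural way to decompose $S_{F+h}$.

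There is, however, a gap at the crossing step. Blumenthal's $0$--$1$ law gives only that
\[
\mathbb{P}\bigl(W_h < \phi(h)~\text{i.o.\ as}~h \downarrow 0 \,\big|\, \phi\bigr) \in \{0,1\},
\]
not which value occurs; for a general continuous $\phi$ with $\phi(0)=0$ the answer can perfectly well be $0$. For instance, if $\phi(h) = 2\sqrt{2h\log\log(1/h)}$ near $0$, the law of the iterated logarithm forces $W_h < \phi(h)$ for all sufficiently small $h$, so $W-\phi$ stays negative and never crosses. Since $F$ is not a forward stopping time, you have no a priori control on the local growth of $\phi(h) = B_{F+h} - B_F$: nothing in your argument rules out $\phi$ lying in the upper or lower class for $W$, and in that case $S_{F+h} = W_h - \phi(h)$ would keep a fixed sign on a right-neighbourhood of $F$, potentially the \emph{same} sign as on $[0,F)$.

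The paper's way around this is Shepp's absolute continuity \eqref{abssw} (cf.\ Lemma~\ref{abszero}), which is precisely what the text says ``enables us to prove Proposition~\ref{scaling}''. On each unit interval the law of $S$ is equivalent to that of $\widetilde{B} = \sqrt{2}(\xi + B)$ with $\xi \sim \mathcal{N}(0,1)$ independent of $B$. For $\widetilde{B}$ every zero is a crossing zero: its first zero is an honest stopping time of $B$, after which the strong Markov property yields immediate sign changes, and the same holds at all subsequent zeros. Hence $\widetilde{B}$, and by absolute continuity $S$, almost surely has no one-sided zeros on any unit interval; in particular $F$ is a crossing zero. If you replace your Blumenthal appeal by this absolute-continuity transfer, the rest of your argument goes through unchanged.
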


Further, we may consider Knight's \cite{Knightapprox1,Knightapprox2} embedding of random walks in Brownian motion. Endow the space $\mathcal{C}[0,\infty)$ with the topology of uniform convergence on compact sets. Fix $n \in \mathbb{N}$. Let $\tau^{(n)}_0:=0$ and $\tau_{k+1}^{(n)}:=\inf\{t>\tau_k^{(n)}; |B_t-B_{\tau_k^{(n)}}|=2^{-n} \}$ for $k \in \mathbb{N}$. Note that $\left(RW^{(n)}_k:=2^nB_{\tau^{(n)}_k}\right)_{k \in \mathbb{N}}$ is a simple random walk. In addition, the sequence of linearly interpolated random walks
$$\left(\frac{RW^{(n)}_{2^{2n}t}}{2^n} ; t \geq 0 \right)~\mbox{converges almost surely in}~\mathcal{C}[0,\infty)~\mbox{to}~(B_t; t \geq 0).$$
It is not hard to see that $F(w):=\inf\{t \geq 0; w_{t+1}=w_t\}$ is not continuous at all paths $w \in \mathcal{C}_0[0,\infty)$. Nevertheless, Pitman and Tang \cite[Proposition $2.4$]{PTpattern} proved that $F$ is $\mathbb{P}^{\bf W}$-a.s. continuous, where $\mathbb{P}^{\bf W}$ is  Wiener measure on $\mathcal{C}[0,\infty)$. Thus, the convergence of Proposition \ref{PT2014} is almost sure in the context of Knight's construction of simple random walks.

Now we focus on the discrete bridge defined as in \eqref{disbl}. Note that the support of the first level bridge is all bridge paths since the first $n$ steps starting from $0$ can be any path. For $n=2$, the bridge $(RW_{F_2+k}-RW_{F_2})_{0 \leq k \leq 2}$ obviously has uniform distribution on the two possible paths, one positive and one negative.  However, the first level bridge of length $n$ is not uniform for $n > 2$. Using the Markov chain matrix method, we can compute the exact distribution of this first level bridge for $n=4$ and $6$. By up-down symmetry, we only need to be concerned with those paths whose first step is $+1$.
\begin{center}
\includegraphics[width=0.5 \textwidth]{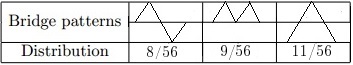}\\
TABLE $1$. The distribution of the first level bridge as in \eqref{disbl} for $n=4$.
\end{center}

\begin{center}
\includegraphics[width=1 \textwidth]{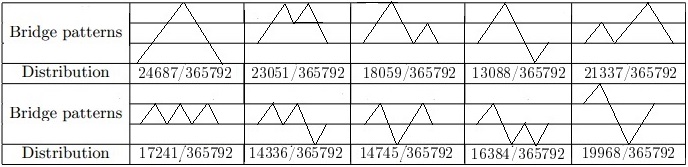}\\
TABLE $2$. The distribution of the first level bridge as in \eqref{disbl} for $n=6$.
\end{center}

The numerical results in Table $1$ and $2$ give us that the first level bridge fails to be uniform, at least, for $n=4$ and $6$. By elementary algebraic computation, it is not hard to check that this is true for all $n >2$. Now it is natural to ask whether the first level bridge could be asymptotically uniform. To this end, we compute the ratio of extremal probabilities of the first level bridge for some small $n$'s.
\begin{center}
\includegraphics[width=0.5\textwidth]{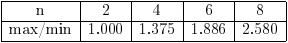}\\
TABLE $3$. The ratio max/min probability of the first level bridge of length $n$.
\end{center}

In Table $3$, the ratios max/min of hitting probabilities suggest that the first level bridge might not be asymptotically uniform. Thus, the answer to \eqref{op411} of Question \ref{opbr} may be negative, i.e. the bridge-like process defined as in \eqref{bridgelike} is not standard Brownian bridge. 

This is further confirmed by the following simulations, which show that as $n$ grows, the empirical distribution of the maximum of the first level bridge does not appear to converge to the {\em Kolmogorov-Smirnov distribution} \cite{Kolm,Smirnov}, that is the distribution of the supremum of Brownian bridge, see e.g. Billingsley \cite[Section $13$]{Bill}.
\begin{figure}[h]
\begin{center}
\includegraphics[width=0.6 \textwidth]{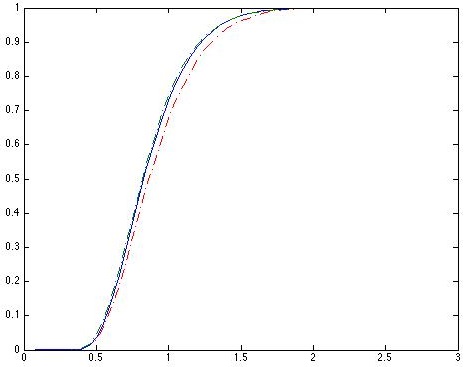}\\
\end{center}
\caption{Solid curve: the Kolmogorov-Smirnov CDF; Dashed curve over the solid curve: the empirical CDF of the maximum of scaled uniform bridge of length $n=10^4$; dashed curve below the solid curve: the empirical CDF of the maximum of the first level bridge of length $n=10^4$.}
\end{figure}
\begin{center}
    \begin{tabular}{| c | c | c | c | c | c | c |}
    \hline
    $n$ & $100$ & $500$ & $1000$ & $2000$ & $5000$ & $10000$ \\ \hline
    CDF($1.3$) & $0.9361$ & $0.9193$ & $0.9129$ & $0.9117$ & $0.9088$ & $0.9080$ \\ \hline
    Difference & $-0.0042$ & $0.0126$ & $0.0190$ & $0.0202$ & $0.0231$ & $0.0239$ \\ \hline
    \end{tabular}\\
    \end{center}
{TABLE $4$. $2^{nd}$ row: the CDFs at $1.3$ of the scaled maximum of the first level bridge of length $n$. $3^{rd}$ row: the differences between the Kolmogorov-Smirnov CDF evaluated at $1.3$ ($\approx 0.9319$) and those of the $2^{nd}$ row.}
\section{The Slepian process: Old and New}
\label{slepiansec}
 Let us turn back to the random time $F$ defined as in \eqref{bridgeT}. We rewrite it as
\begin{equation}
\label{Ts}
F:=\inf\{t \geq 0; S_t=0\},
\end{equation}
where $S_t:=B_{t+1}-B_{t}$ for $t \geq 0$ is a stationary Gaussian process with mean $0$ and covariance $\mathbb{E}[S_{t_1}S_{t_2}]=\max(1-|t_1-t_2|,0)$. Note that $(S_t; t \geq 0)$ is not Markov, since the only nontrivial stationary, Gaussian and Markov process is the Ornstein-Uhlenbeck process, see e.g. Doob \cite[Theorem $1.1$]{Doob}. The process $(S_t; t \geq 0)$ was first studied by Slepian \cite{Slepian}. Later, Shepp \cite{Shepp} gave an explicit formula for
$$I(t|x):=\mathbb{P}(F>t|S_0=x),$$
as a $t-$fold integral when $t$ is an integer and as a $2[t]+2-$fold integral when $t$ is not an integer. Shepp's results are as follows. Let
$$\phi(x):=\frac{1}{\sqrt{2 \pi}} \exp \left( -\frac{x^2}{2} \right) \quad \mbox{and} \quad \phi_{\theta}(x):=\frac{1}{\sqrt{\theta}} \phi \left(\frac{x}{\sqrt{\theta}}\right).$$
When $t=n$ is an integer,
\begin{equation}
\label{integer}
I(t|x) \phi(x)= \int_{\mathcal{D}'} \det\Bigg[\phi(y_i-y_{j+1})\Bigg]_{0 \leq i,j \leq n} dy_2 \cdots dy_{n+1},
\end{equation}
where $y_0=0$, $y_1=|x|$ and $\mathcal{D}':=\{|x|<y_2< \cdots <y_{n+1}\}$. When $t=n+\theta$ where $0 < \theta <1$,
\begin{multline}
\label{noninteger}
I(t|x) \phi(x)=\int_{\mathcal{D}''} \det\Bigg[\phi_{\theta}(x_i-y_i)\Bigg]_{0 \leq i,j \leq n+1} \\
\times \det\Bigg[\phi_{\theta}(y_i-x_{j+1})\Bigg]_{0 \leq i,j \leq n} dx_2 \cdots dx_{n+1}dy_0 \cdots dy_{n+1},
\end{multline}
where $x_0=0$, $x_1=|x|$ and $\mathcal{D}'':=\{|x|<x_2< \cdots <x_{n+1}~\mbox{and}~y_0< \cdots <y_{n+1}\}$.
The distribution of the first passage time $F$ is characterized by 
\begin{equation}
\label{dfpt}
\mathbb{P}(F>t)=\int_{\mathbb{R}} I(t|x)\phi(x)dx,
\end{equation}
where $ I(t|x) \phi(x)$ is given as \eqref{integer} when $t$ is integer and given as \eqref{noninteger} when it is not. 
In particular, 
\begin{align}
\mathbb{P}(F>1) &= \int_{\mathbb{R}} [\Phi(0) \phi(x)- \phi(0)\Phi(x)]dx \notag\\
                               &=\frac{1}{2} -\frac{1}{\pi}, \label{Tbig1}
\end{align}
where $\Phi(x):=\int_{-\infty}^{x}\phi(z)dz$ is the cumulative distribution function of the standard normal distribution. 

In this paper, we study the local structure of the Slepian zero set, i.e. $\{t \in [0,1]; S_t=0\}$, by showing that it is mutually absolutely continuous relative to that of Brownian motion with normally distributed starting point. The main result, which provides a path decomposition of the Slepian process on $[0,1]$, is stated as below.
\begin{theorem}
\label{main}
Let $F:=\inf\{t \geq 0; S_t=0\}$ and $G:=\sup\{t \leq 1; S_t=0\}$. Given the quadruple $(S_0,S_1,F,G)$ with $0<F<G<1$, the Slepian process $(S_t; 0 \leq t \leq 1)$ is decomposed into three conditionally independent pieces:
\begin{itemize}
\item
$(S_t/\sqrt{2};0 \leq t \leq F)$ is Brownian first passage bridge from $(0,S_0/\sqrt{2})$ to $(F,0)$;
\item
$(S_t/\sqrt{2};F \leq t \leq G)$ is Brownian bridge of length $G-F$;
\item
$(|S_t|/\sqrt{2}; G \leq t \leq 1)$ is a three-dimensional Bessel bridge from $(G,0)$ to $(1,|S_1|/\sqrt{2})$.
\end{itemize} 
In addition, the distribution of $(S_0,S_1,F,G)$ with $0<F<G<1$ is given by
\begin{multline}
\label{bigdens}
\mathbb{P}(S_0 \in dx, S_1 \in dy, F \in da, G \in db) =\\ \frac{|xy|}{8 \pi^2 \sqrt{ (b-a)a^3(1-b)^3}}\exp\left(-\frac{x^2}{4a}-\frac{y^2}{4(1-b)}-\frac{(x+y)^2}{4}\right).
\end{multline}
\end{theorem}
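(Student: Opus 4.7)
The plan rests on the key observation that, conditional on the endpoints $(S_0, S_1) = (x, y)$, the rescaled Slepian process $(S_t/\sqrt{2};\, 0 \le t \le 1)$ is a Brownian bridge from $(0, x/\sqrt{2})$ to $(1, y/\sqrt{2})$. Given this reduction, the three-piece decomposition follows from the classical path decomposition of a Brownian bridge at its first and last zero, and the density \eqref{bigdens} is obtained by assembling known one-dimensional laws.

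To establish the reduction, I would condition the ambient Brownian motion $(B_t;\, 0 \le t \le 2)$ on $(B_1, B_2) = (a, a+b)$, which splits $B$ into two conditionally independent Brownian bridges on $[0,1]$ and $[1,2]$. Writing each as its linear mean plus an independent standard Brownian bridge $U^1, U^2$, one obtains
\[
S_t = a(1-t) + bt + (U^2_t - U^1_t) \qquad \text{on } \{S_0 = a,\, S_1 = b\}.
\]
Since $(U^2 - U^1)/\sqrt{2}$ is itself a standard Brownian bridge, the stated bridge representation of $S_t/\sqrt{2}$ follows.

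Next, I would apply the Williams-type path decomposition of a Brownian bridge $Z$ from $(0, \alpha)$ to $(1, \beta)$ at its first zero $F$ and last zero $G$ on $\{0 < F < G < 1\}$. The Markov property of $Z$ at $F$ and $G$ splits $Z$ into three conditionally independent pieces: the pre-$F$ part is a Brownian first-passage bridge from $(0, \alpha)$ to $(F, 0)$; the middle piece is a standard Brownian bridge from $(F, 0)$ to $(G, 0)$; and, using the classical identity that a Brownian bridge starting at $0$ conditioned to avoid $0$ in the open interval has absolute value a three-dimensional Bessel bridge, the post-$G$ piece has absolute value a BES(3) bridge from $(G, 0)$ to $(1, |\beta|)$. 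Setting $\alpha = x/\sqrt{2}$, $\beta = y/\sqrt{2}$ translates this back into the claimed decomposition of $S/\sqrt{2}$.

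For the density \eqref{bigdens}, I would compute the joint density of $(F, G, Z_1)$ for a Brownian motion started at $\alpha$ by stringing together the first-passage density of $F$ at $a$, the arcsine density for the last zero of the restarted Brownian motion on $[0, 1-a]$, and the Brownian meander endpoint (Rayleigh-type) density for $|Z_1|$. Dividing by the Brownian bridge normalization $\phi_1(\beta - \alpha)$ and multiplying by $\phi(x)\phi(y)$ (the joint law of $(S_0, S_1)$, which are independent standard normals since $\mathbb{E}[S_0 S_1] = 0$) yields the joint density of $(S_0, S_1, F, G)$ after substituting $\alpha = x/\sqrt{2}$, $\beta = y/\sqrt{2}$. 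The only nontrivial step is algebraic: the quadratic exponent collapses via $-(x^2 + y^2)/2 + (y-x)^2/4 = -(x+y)^2/4$ to the form stated in \eqref{bigdens}, and I expect this final bookkeeping to be the main technical obstacle.
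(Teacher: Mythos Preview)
Your proposal is correct and essentially matches the paper's primary proof: the paper establishes (via its Proposition~\ref{consts}, which is your conditioning argument on $(B_1,B_2)$ in slightly different packaging) that given $(S_0,S_1)$ the process $S/\sqrt{2}$ on $[0,1]$ is a Brownian bridge between $S_0/\sqrt{2}$ and $S_1/\sqrt{2}$, then reads off the three-piece decomposition and assembles the density~\eqref{bigdens} from the same standard ingredients you list. The paper also offers a second proof via Shepp's absolute-continuity relation~\eqref{abssw} and a change-of-measure lemma, but your route coincides with the first and more direct one.
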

\begin{figure}[h]
\begin{center}
\includegraphics[width=0.6 \textwidth]{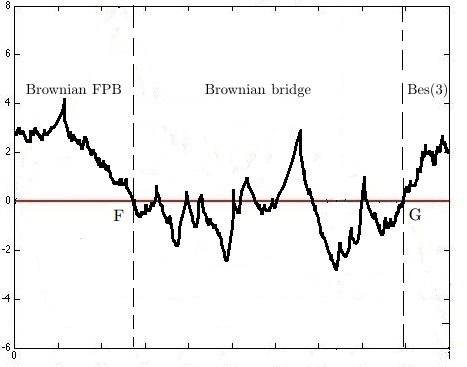}
\end{center}
\caption{Path decomposition of $(S_t/\sqrt{2};0 \leq t \leq 1)$ with $0<F<G<1$.}
\end{figure}

On the event $\{0<F<G<1\}$, the Slepian process is achieved by first creating the quadruple $(S_0,S_1,F,G)$ and then filling in with usual Brownian components. Similarly, on the event $\{F>1\}$, $(S_t/\sqrt{2}; 0 \leq t \leq 1)$ is Brownian bridge from $(0,S_0/\sqrt{2})$ to $(1,S_1/\sqrt{2})$ conditioned not to hit $0$. 

The proof of Theorem \ref{main} is deferred to Section \ref{22}. One method relies on Shepp \cite{SheppGaussian}'s result of the absolute continuity between Gaussian measures, where the Slepian process was proved to be mutually absolutely continuous with respect to some modified Brownian motion on $[0,1]$. As pointed out by Shepp \cite{Shepp}, the absolute continuity fails beyond the unit interval. This is why we  restrict the study of the Slepian zero set to intervals of length $1$. Nevertheless, we have the following conjecture:
\begin{conjecture}
For $t \geq 0$, the Slepian zero set on $[0,t]$, i.e. $\{u \in [0,t]; S_u=0\}$, is mutually absolutely continuous with respect to that of $\{u \in [0,t]; \xi+B_u=0\}$, the zero set of Brownian motion started at $\xi$ with standard normal distribution, $\xi \sim \mathcal{N}(0,1)$,
and $\xi$ independent of $B$.
\end{conjecture}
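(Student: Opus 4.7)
The plan is to bootstrap from the base case $t \le 1$, which follows from the path-level absolute continuity used in the proof of Theorem \ref{main}, to general $t > 0$. For $t \le 1$, Shepp's theorem gives mutual absolute continuity of the laws of $(S_u)_{u \le 1}$ and $(\eta + \sqrt{2}\widetilde{B}_u)_{u \le 1}$ as measures on $\mathcal{C}[0,1]$ (and hence on $\mathcal{C}[0, t]$ by restriction); the zero sets then inherit mutual absolute continuity by pushforward under the measurable map $w \mapsto \{u : w(u) = 0\}$, the constant scaling $S/\sqrt{2}$ vs.\ $B$ being irrelevant for the zero set and a normal linear transformation of $\eta$ being harmless.

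For $t > 1$, where Shepp has shown path-level absolute continuity \emph{fails}, I would proceed inductively on $\lceil t \rceil$ using the Markov property of the moving-window process $X$. The crucial identity, for each integer $k \ge 0$ and $r \in [0, 1]$, is
\begin{equation*}
S_{k+r} \;=\; \widetilde{B}^{(k)}_r + X_k(1) - X_k(r),
\end{equation*}
where $\widetilde{B}^{(k)}_r := B_{k+1+r} - B_{k+1}$ is a Brownian motion independent of the $\sigma$-algebra generated by $(B_u;\, 0 \le u \le k+1)$ and hence of $X_k$. Thus, conditional on $X_k$, the Slepian process on $[k, k+1]$ is a Brownian motion shifted by the deterministic function $X_k(1) - X_k(\cdot)$, and its zero set there is the hitting set $\{r \in [0,1] : \widetilde{B}^{(k)}_r = X_k(r) - X_k(1)\}$; for the reference process $\xi + B$, the analogous conditional zero set on $[k, k+1]$ given $B_k$ is the hitting set of a fixed level $-(\xi + B_k)$ by a fresh Brownian motion. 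These two objects are the right ones to compare intervalwise.

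The main obstacle is the \emph{quenched} comparison of the two conditional zero sets: for $\mathbb{P}^{\bf W}$-almost every path $w$, mutual absolute continuity of the zero sets of $\widetilde{B}_r - (w(r) - w(1))$ and $\widetilde{B}_r - c$ on $[0,1]$. A direct Cameron--Martin--Girsanov change of measure fails because a typical Brownian path $w$ is only $1/2$-H\"older and not absolutely continuous, so the naive density is ill-defined. However, the \emph{annealed} (averaged over $w$) law of $\widetilde{B} - (w - w(1))$ has covariance $\max(1 - |r-s|, 0)$---it is itself a copy of the Slepian process on $[0,1]$---so the base case gives the annealed equivalence for free, and the task reduces to upgrading from annealed to quenched via a disintegration of the joint law.

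A likely cleaner complementary route is through local times. The Slepian process admits a local time at $0$ (Lemma \ref{abszero} and the Brownian structure of its zero set), and mutual absolute continuity of this local time with the local time at $0$ of $\xi + B$, as random measures on $[0, t]$, would immediately imply mutual absolute continuity of the closed supports---exactly the zero sets in question. The core technical challenge is to control the Radon--Nikodym density between the two occupation/local-time measures both from above and below, using Shepp's finite-dimensional formulas \eqref{integer}--\eqref{noninteger} together with the three-piece path decomposition of Theorem \ref{main} applied inductively to unit intervals $[k,k+1]$, and then to propagate this control across intervals via the Markov structure of $X$. Extending the local-time construction and this Radon--Nikodym control from $[0, 1]$ to $[0, t]$ with $t > 1$---where \emph{process-level} absolute continuity is known to break---is the principal difficulty of the conjecture.
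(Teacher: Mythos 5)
This statement is an \emph{open conjecture} in the paper: the authors prove only the $t \le 1$ case (Lemma~\ref{abszero}) via Shepp's Radon--Nikodym formula \eqref{abssw}, observe explicitly that Shepp's process-level equivalence fails past the unit interval, and offer no proof for $t > 1$. Your proposal is not a proof either --- and you acknowledge this --- but your structural analysis is sound. The conditional identity $S_{k+r} = \widetilde{B}^{(k)}_r + X_k(1) - X_k(r)$ with $\widetilde{B}^{(k)}_r := B_{k+1+r}-B_{k+1}$ is correct (check: $B_{k+1+r}-B_{k+1}+B_{k+1}-B_{k+r} = B_{k+1+r}-B_{k+r} = S_{k+r}$), the independence of $\widetilde{B}^{(k)}$ from $X_k$ holds since $\widetilde{B}^{(k)}$ uses only increments past time $k+1$, and your annealed covariance computation $\min(r,s) + \bigl(\min(r,s) - r - s + 1\bigr) = 1 - |r-s|$ correctly recovers the Slepian law on a unit block.

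The two gaps you identify are genuine and not easily closed. First, the quenched comparison: Cameron--Martin--Girsanov is inapplicable because $X_k(1)-X_k(\cdot)$ is a.s.\ a rough Brownian path, not a Cameron--Martin element, and equal annealed laws do not in general imply equivalence of the quenched disintegrations --- one can have two laws on a product space with identical marginals whose fibers are mutually singular for a.e.\ conditioning value --- so the ``upgrade from annealed to quenched via disintegration'' that you gesture at is exactly the missing ingredient, not a reduction. Second, the local-time route would require two-sided control on the Radon--Nikodym density between the occupation measures on $[0,t]$ for $t>1$, which neither Shepp's finite-dimensional formulas \eqref{integer}--\eqref{noninteger} nor Theorem~\ref{main} (which is confined to $[0,1]$) delivers across block boundaries, precisely because the process-level equivalence that underwrites the density on a single unit interval has no analogue once windows overlap two blocks. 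You have accurately mapped the problem's structure and its difficulty, but the conjecture remains open, exactly as the paper leaves it.
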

\section{The Slepian zero set and path decomposition}
\label{2}
In this section, we study the Slepian zero set on $[0,1]$, that is $\{u \in [0,1]; S_u=0\}$. The problem here involves level crossings of a stationary Gaussian process. We refer readers to the surveys of Blake and Lindsey \cite{BL}, Abrahams \cite{Abr}, Kratz \cite{Kratz}, as well as the books of Cram\'{e}r and Leadbetter \cite[Chapter $10$]{CL}, Aza\"{i}s and Wschebor \cite[Chapter $3$]{AW} for further development. 

Berman \cite{Berman} studied general criteria for stationary Gaussian processes to have local times. In particular, he proved that if $(Z_t; t \geq 0)$ is a stationary Gaussian process with covariance $R^Z(t)$ and $1-R^Z(t) \underset{t \rightarrow 0}{\sim} |t|^{\alpha}$ for some $0<\alpha<2$, then $Z$ has local times $(L^x_t; x \in \mathbb{R},t \geq 0 )$ such that for any Borel measurable set $C \subset \mathbb{R}$ and $t \geq 0$, 
$$\int_0^t 1(Z_s \in C) ds =\int_C  L_t^x dx.$$
As discussed in the introduction, the Slepian process has covariance $R^S(t):=\max(1-|t|,0)$, which obviously fits into the above category. See also the survey of Geman and Horowitz \cite{GH} for further development on Gaussian occupation measures. 

Below is the plan for this section:

 In Subsection \ref{21}, we deal with the local absolute continuity between the distribution of the Slepian process and that of Brownian motion with random starting point. This follows some general discussion on the absolute continuity between Gaussian measures by Shepp \cite{SheppGaussian}. 

In Subsection \ref{22}, we give two proofs for the path decomposition of the Slepian process $(S_t; 0 \leq t \leq 1)$, Theorem \ref{main}. There we provide an alternative construction of $(S_t; t \geq 0)$, Proposition \ref{consts}.

In Subsection \ref{23}, we study a {\em Palm-It\^{o} measure} associated to the gaps between Slepian zeros, with comparison to the well-known {\em It\^{o}'s excursion law} \cite{Itoex}.
 \subsection{Local absolute continuity between Slepian zeros and Brownian zeros}
 \label{21}
 As proved by Shepp \cite{SheppGaussian}, for each fixed $t \leq 1$, the distribution of the Slepian process $(S_u; 0 \leq u \leq t)$ is mutually absolutely continuous with respect to that of
\begin{equation}
\label{Btil}
(\widetilde{B}_u:=\sqrt{2}(\xi+B_u); 0 \leq u \leq t),
\end{equation}
where $\xi \sim \mathcal{N}(0,1)$ is independent of $(B_u; u \geq 0)$. The Radon-Nikodym derivative is given by
\begin{equation}
\label{abssw}
\frac{d\mathbb{P}^{\bf S}}{d \mathbb{P}^{\bf \widetilde{W}}}\left(w\right):=\frac{2}{\sqrt{2-t}}\exp\left(\frac{w_0^2}{4}-\frac{(w_0+w_t)^2}{4(2-t)}\right),
\end{equation}
where $\mathbb{P}^{\bf S}$ (resp. $\mathbb{P}^{\bf \widetilde{W}}$) is the distribution of the Slepian process $S$ (resp. the modified Brownian motion $\widetilde{B}$ defined as in \eqref{Btil}) on $\mathcal{C}[0,1]$. As a first application, we compute the density of the first passage time $F$, defined as in \eqref{Ts}, on the unit interval.
\begin{proposition}
\label{prop21}
For $w \in \mathcal{C}[0,1]$, let $F:=\inf\{t \geq 0; w_t=0\}$. Then
\begin{equation}
\label{explicitT}
\mathbb{P}^{\bf S}(F \in da) =\frac{1}{\pi} \sqrt{\frac{2-a}{a}}da \quad \mbox{for}~0 \leq a \leq 1,
\end{equation}
\end{proposition}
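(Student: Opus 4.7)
The plan is to push the calculation through the absolute continuity (3.5) to the reference law $\mathbb{P}^{\bf \widetilde{W}}$, under which the first passage time $F$ is completely explicit: it is the first hit of $0$ by the Brownian motion of variance $2u$ at time $u$ started at the Gaussian position $\sqrt{2}\,\xi \sim \mathcal{N}(0,2)$. Under $\mathbb{P}^{\bf \widetilde{W}}$ the joint law of $(w_0,F)$ is therefore a product of the Gaussian marginal with density $\tfrac{1}{2\sqrt{\pi}}e^{-x^2/4}$ and the classical first passage density $\tfrac{|x|}{2\sqrt{\pi a^3}}e^{-x^2/(4a)}$ conditional on $w_0=x$.

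For the Radon--Nikodym weight, I would first check by a short Gaussian conditional expectation that the family in (3.5), indexed by $t\in[0,1]$, is a $\mathbb{P}^{\bf \widetilde{W}}$-martingale (the needed identity $\mathbb{E}[\exp(-(\alpha+Z)^2/(4(2-t)))]=\sqrt{(2-t)/(2-s)}\,\exp(-\alpha^2/(4(2-s)))$ for $Z\sim\mathcal{N}(0,2(t-s))$ reduces to a one-dimensional completion of the square). Optionally stopping this martingale at the bounded stopping time $F\wedge 1$ and using $w_F=0$ on $\{F\le 1\}$ yields
\[
\mathbb{P}^{\bf S}(F\in da)\;=\;\frac{2}{\sqrt{2-a}}\int_{\mathbb{R}}\exp\!\Bigl(\tfrac{x^{2}(1-a)}{4(2-a)}\Bigr)\cdot\frac{|x|}{4\pi a^{3/2}}\,e^{-x^{2}/4-x^{2}/(4a)}\,dx\cdot da.
\]
The three quadratic terms in the exponent telescope cleanly to $-x^2/(2a(2-a))$, after which the elementary identity $\int_{\mathbb{R}}|x|\,e^{-x^{2}/(2\sigma^{2})}dx=2\sigma^{2}$ with $\sigma^{2}=a(2-a)$ gives a factor $2a(2-a)$, and collecting prefactors produces $\frac{1}{\pi}\sqrt{(2-a)/a}\,da$.

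The only technical point is the Gaussian bookkeeping, which is routine. As a sanity check I would integrate the resulting density over $[0,1]$ via $a=1-\cos\phi$, getting $\mathbb{P}^{\bf S}(F\le 1)=\tfrac12+\tfrac{1}{\pi}$, consistent with Shepp's formula $\mathbb{P}(F>1)=\tfrac12-\tfrac{1}{\pi}$ recalled in (3.4). If one prefers to sidestep checking the martingale property, an essentially identical argument fixes $t=1$ in (3.5) (convenient because $\sqrt{2-t}=1$), uses the strong Markov property of $\widetilde{B}$ at $F$ to integrate out the independent Gaussian $w_{1}-w_{F}=w_{1}$ of variance $2(1-a)$, and arrives at the same single integral in $x=w_{0}$ with the same telescoping exponent.
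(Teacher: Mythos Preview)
Your proposal is correct and follows essentially the same route as the paper: push the computation to $\mathbb{P}^{\bf \widetilde{W}}$ via the Radon--Nikodym derivative \eqref{abssw}, use $w_F=0$ to collapse the weight to a function of $w_0$ alone, and integrate against the product of the $\mathcal{N}(0,2)$ starting density and the first passage density \eqref{TS2}. The only difference is that the paper applies \eqref{abssw} directly with $t=a$ (treating $\{F\in da\}$ as an $\mathcal{F}_a$-event with $w_a=0$), while you make the justification explicit by checking that the densities in \eqref{abssw} form a $\mathbb{P}^{\bf \widetilde{W}}$-martingale and invoking optional stopping at $F\wedge 1$; this is the rigorous underpinning of the paper's shortcut, and your alternative via $t=1$ plus the strong Markov property is equally valid.
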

\begin{proof} Fix $a \leq 1$. By the change of measure formula \eqref{abssw},
\begin{align}
\mathbb{P}^{\bf S}(F \in da) &=\mathbb{E}^{\bf \widetilde{W}}\left[1(F \in da) \cdot \frac{2}{\sqrt{2-a}}\exp\left(\frac{w_0^2}{4}-\frac{(w_0+w_a)^2}{4(2-a)}\right)\right] \notag\\
                    &=\frac{2}{\sqrt{2-a}} \mathbb{E}^{\bf \widetilde{W}}\left[1(F \in da) \exp\left(\frac{w_0^2}{4}-\frac{w_0^2}{4(2-a)}\right)\right] \notag\\
                    &\label{TS1}=\frac{2}{\sqrt{2-a}} \int_{\mathbb{R}} \frac{1}{\sqrt{4 \pi}} \exp\left(-\frac{x^2}{4}\right) \cdot \mathbb{P}^{\bf \widetilde{W}_x}(F \in da) \exp\left(\frac{x^2}{4}-\frac{x^2}{4(2-a)}\right)dx,
\end{align}
where $\mathbb{P}^{\bf \widetilde{W}_x}$ is the distribution of $\widetilde{B}$ conditioned on $\widetilde{B}_0=x$. It is well-known that
\begin{equation}
\label{TS2}
\mathbb{P}^{\bf \widetilde{W}_x}(F \in da)=\frac{|x|}{\sqrt{4 \pi a^3}}\exp\left(-\frac{x^2}{4a}\right)da.
\end{equation}
Injecting \eqref{TS2} into \eqref{TS1}, we obtain
\begin{align*}
\mathbb{P}^{\bf S}(F \in da) &=\frac{1}{2 \pi \sqrt{(2-a)a^3}} \int_{\mathbb{R}}|x| \exp\left(-\frac{x^2}{2a(2-a)}\right)dx da \\
                    &= \frac{1}{\pi} \sqrt{\frac{2-a}{a}}da.
\end{align*}
\end{proof}
\begin{remark}
As a check, from \eqref{explicitT}, $$\mathbb{P}^{\bf S}(F \leq 1)=\frac{1}{2}+\frac{1}{\pi} \approx 0.82,$$
which agrees with the formula \eqref{Tbig1} derived from the determinantal expressions \eqref{integer}, \eqref{noninteger} and \eqref{dfpt}. Since the absolute continuity relation does not hold when $t>1$, we are not able to derive a simple formula for the density of $F$ on $(1,\infty)$.
\end{remark}

Next we deal with the local absolute continuity between the distribution of Slepian zeros and that of Brownian motion with normally distributed starting point. The result enables us to prove Proposition \ref{scaling}, that is the weak convergence of the discrete first level bridges to the bridge-like process as in \eqref{bridgelike}.
\begin{lemma}
\label{abszero}
For each fixed $t \geq 0$, the distribution of $(S_u; t \leq u \leq t+1)$ is mutually absolutely continuous with respect to that of $(\widetilde{B}_u; t \leq u \le t+1)$ defined as in \eqref{Btil}. The Radon-Nikodym derivative is given by
\begin{equation}
\label{absstilw}
\frac{d \mathbb{P}^{\bf S}}{d \mathbb{P}^{\bf \widetilde{W^t}}}(w)=2 \sqrt{\frac{1+t}{2-t}}\exp\left(\frac{w_0^2}{4(1+t)}-\frac{(w_0+w_1)^2}{4(2-t)}\right),
\end{equation}
where $\mathbb{P}^{\widetilde{\bf W^t}}$ is the distribution of $\widetilde{B}$ on $[t,t+1]$. In particular, the distribution of the Slepian zero set restricted to $[t,t+1]$, i.e. $\{u \in [t,t+1]; S_u=0\}$ is mutually absolutely continuous with respect to that of $\{u \in [t,t+1]; \xi+B_u=0\}$, the zero set of Brownian motion starting at $\xi \sim \mathcal{N}(0,1)$.
\end{lemma}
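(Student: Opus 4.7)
\textbf{The plan} is to reduce the length-one shifted identity \eqref{absstilw} to the already-given \eqref{abssw} on $[0,1]$ via two elementary steps: stationarity of $S$, and a change of the initial distribution of $\widetilde{B}$. First I would use translation invariance of the covariance $\max(1-|u-v|,0)$ to identify, under the shift $v\mapsto t+v$, the law of $(S_{t+v};0\le v\le 1)$ on $\mathcal{C}[0,1]$ with $\mathbb{P}^{\bf S}$, reducing the problem on $[t,t+1]$ to one on $[0,1]$.

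Next I would specialise \eqref{abssw} at parameter $1$, producing an explicit Radon--Nikodym density of $\mathbb{P}^{\bf S}$ against $\mathbb{P}^{\bf \widetilde{W}}$ on $\mathcal{C}[0,1]$. To change reference to $\mathbb{P}^{\bf \widetilde{W^t}}$, observe that under either $\mathbb{P}^{\bf \widetilde{W}}$ or $\mathbb{P}^{\bf \widetilde{W^t}}$ the increment process $v\mapsto w_v-w_0$ equals $\sqrt{2}(B_{s+v}-B_s)$ for $s=0$ or $s=t$, hence is $\sqrt{2}$ times a standard Brownian motion on $[0,1]$ independent of $w_0$. The two laws therefore disagree only in the Gaussian marginal of $w_0$, which is $\mathcal{N}(0,2)$ under $\mathbb{P}^{\bf \widetilde{W}}$ and $\mathcal{N}(0,2(1+t))$ under $\mathbb{P}^{\bf \widetilde{W^t}}$ since $\widetilde{B}_t=\sqrt{2}(\xi+B_t)$. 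Thus
$$\frac{d\mathbb{P}^{\bf \widetilde{W}}}{d\mathbb{P}^{\bf \widetilde{W^t}}}(w)=\frac{\phi_{2}(w_0)}{\phi_{2(1+t)}(w_0)},$$
and applying the chain rule together with a routine quadratic simplification in $(w_0,w_1)$ yields the closed form \eqref{absstilw}.

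The in-particular clause on zero sets is immediate from the first part: mutual absolute continuity of path laws is preserved under push-forward by any measurable map, so applying this to the functional $w\mapsto\{u\in[t,t+1]:w_u=0\}$ and using that $\widetilde{B}_u=0$ exactly when $\xi+B_u=0$ delivers the result. \textbf{The only obstacle} is bookkeeping: one must combine $\tfrac{w_0^2}{4}-\tfrac{(w_0+w_1)^2}{4}$ from Shepp's density with the log-ratio of initial Gaussian densities, and then repackage the resulting quadratic form in $(w_0,w_1)$ to match the coefficients $\tfrac{1}{4(1+t)}$ and $\tfrac{1}{4(2-t)}$ stated in \eqref{absstilw}.
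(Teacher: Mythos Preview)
Your approach is essentially identical to the paper's own proof: both invoke stationarity of $S$ to reduce to $[0,1]$, apply \eqref{abssw}, and then correct the reference measure via the Radon--Nikodym derivative $\frac{d\mathbb{P}^{\bf \widetilde W}}{d\mathbb{P}^{\bf \widetilde W^t}}$, which the paper states directly as $\sqrt{1+t}\exp\bigl(-\tfrac{t\,w_0^2}{4(1+t)}\bigr)$ and which your Gaussian-marginal ratio $\phi_2(w_0)/\phi_{2(1+t)}(w_0)$ reproduces exactly.

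One caution on your closing ``bookkeeping'' remark: if you carry out the chain rule with \eqref{abssw} specialised at the unit interval (so that its internal parameter equals $1$), the product is
\[
2\sqrt{1+t}\,\exp\Bigl(\tfrac{w_0^2}{4(1+t)}-\tfrac{(w_0+w_1)^2}{4}\Bigr),
\]
which is precisely what the paper's own two displayed ingredients multiply to. The $(2-t)$ factors printed in \eqref{absstilw} do \emph{not} arise from this computation; they appear to be carried over verbatim from the notation of \eqref{abssw} (where $t$ denoted interval length, not the shift). So do not spend effort trying to ``repackage'' the quadratic form to produce a $\tfrac{1}{4(2-t)}$ coefficient---your derivation and the paper's proof both yield $\tfrac{1}{4}$ there, and the stated formula is valid only at $t=1$ as written.
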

\begin{proof} It suffices to prove the first part of this lemma. By stationarity of the Slepian process, the distribution of $(S_u; t \leq u \leq t+1)$ is the same as that of $(S_u; 0 \leq u \leq 1)$, which is mutually absolutely continuous relative to $(\widetilde{B}_u; 0 \leq u \leq 1)$ with density given by \eqref{abssw}. Now we conclude by noting that the distribution of $(\widetilde{B}_u; t \leq u \leq t+1)$ and that of $(\widetilde{B}_u; 0 \leq u \leq 1)$ are mutually absolutely continuous, with Radon-Nikodym derivative
$$\frac{d \mathbb{P}^{\widetilde{{\bf W}}}}{d \mathbb{P}^{\widetilde{{\bf W^t}}}}(w):=\sqrt{1+t} \exp \left(-\frac{t w_0^2}{4(1+t)}\right).$$
\end{proof}

As a consequence, all local properties of the Slepian zero set mimic closely those of Brownian motion with normally distributed starting point. In particular, with positive probability, the Slepian process visits the origin on the unit interval. And immediately thereafter, it returns to the origin infinitely often, as does Brownian motion. In addition, it is easy to see that the Radon-Nikodym derivative between the distribution of $\{u \in [0,1]; S_u=0\}$ and that of $\{u \in [0,1]; \xi+B_u=0\}$ is given by
\begin{equation}
\label{RNzero}
\mathbb{E}\left[\frac{d \mathbb{P}^{\bf S}}{d \mathbb{P}^{\bf \widetilde{W}}}(w)\Bigg| \mbox{Proj}(w)\right],
\end{equation}
where $\mbox{Proj}(w):=\{u \in [0,1]; w_u=0\}$ is the zero set of $w \in \mathcal{C}[0,1]$. In the next subsection, we will see how this conditional expectation, as the Radon-Nikodym derivative, can be made explicit by some sufficient statistics.
 \subsection{Path decomposition of the Slepian process on $[0,1]$}
 \label{22}
In this subsection, we investigate further the local structure of the Slepian zero set by proving Theorem \ref{main}. 

From the work of Slepian \cite{Slepian}, we know that given the i.i.d. normally distributed sequence $(S_n:=B_{n+1}-B_{n})_{n \in \mathbb{N}}$, for each $n \in \mathbb{N}$, the process $(S_{n+u}/\sqrt{2}; 0 \leq u \leq 1)$ has the same distribution as Brownian bridge from $S_n/\sqrt{2}$ to $S_{n+1}/\sqrt{2}$. For $n \in \mathbb{N}$ and $k \geq 2$, the processes $(S_{n+u}/\sqrt{2}; 0 \leq u \leq 1)$ and $(S_{n+k+u}/\sqrt{2}; 0 \leq u \leq 1)$ are independent. 
However, the consecutive bridges $(S_{n+u}/\sqrt{2}; 0 \leq u \leq 1)$ and $(S_{n+1+u}/\sqrt{2}; 0 \leq u \leq 1)$ for $n \in \mathbb{N}$, are correlated. The correlation is inferred from the following construction of the Slepian process.
\begin{proposition}
\label{consts}
Let $(Z_n)_{n \in \mathbb{N}}$ be a sequence of i.i.d. $\mathcal{N}(0,1)$-distributed random variables, and $(b^n_t;0 \leq t \leq 1)_{n \in \mathbb{N}}$ be a sequence of i.i.d. standard Brownian bridges independent of $(Z_n)_{n \in \mathbb{N}}$. Define a continuous-time process $(Z_t; t \geq 0)$ as
\begin{equation}
\label{cstPT}
Z_t:=b^{n+1}_{t-n}-b^{n}_{t-n}+(n+1-t)Z_n+(t-n)Z_{n+1} \quad \mbox{for}~n \leq t <n+1,~n \in \mathbb{N}.
\end{equation}
Then $(Z_t; t \geq 0)$ has the same distribution as the Slepian process $(S_t;t \geq 0)$.
\end{proposition}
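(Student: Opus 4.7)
The plan is to realize the i.i.d. bridges $b^n$ and the i.i.d. standard Gaussians $Z_n$ of the proposition as natural functionals of the Brownian motion $B$ itself, and then to check that the formula \eqref{cstPT} reduces pathwise to $S_t = B_{t+1} - B_t$.

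Concretely, set $\widehat{Z}_n := B_{n+1} - B_n$ and, on each unit interval, define
$$\widehat{b}^{\,n}_s := B_{n+s} - B_n - s\,(B_{n+1}-B_n), \qquad 0 \le s \le 1.$$
I would first verify three standard facts: (i) each $\widehat{b}^{\,n}$ is a standard Brownian bridge; (ii) $\widehat{b}^{\,n}$ is independent of the endpoint increment $B_{n+1} - B_n$; and (iii) the family $\bigl\{\widehat{b}^{\,n},\,\widehat{Z}_n\bigr\}_{n \in \mathbb{N}}$ is jointly independent, with the $\widehat{Z}_n$'s i.i.d. $\mathcal{N}(0,1)$ and the $\widehat{b}^{\,n}$'s i.i.d.\ standard bridges. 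Since all the ingredients are centered and jointly Gaussian, (iii) reduces to checking that off-diagonal covariances vanish, which is immediate from the independence of Brownian increments on disjoint intervals. This shows that $\bigl((\widehat{b}^{\,n})_{n},(\widehat{Z}_n)_{n}\bigr)$ has the same joint law as $\bigl((b^n)_n,(Z_n)_n\bigr)$ in the proposition.

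The key algebraic step is then to use the trivial identity
$$S_t = (B_{n+1} - B_n) + (B_{t+1} - B_{n+1}) - (B_t - B_n), \qquad n \le t < n+1,$$
and to rewrite the last two increments as $B_t - B_n = \widehat{b}^{\,n}_{t-n} + (t-n)\widehat{Z}_n$ and $B_{t+1} - B_{n+1} = \widehat{b}^{\,n+1}_{t-n} + (t-n)\widehat{Z}_{n+1}$. Substituting and collecting the coefficients of $\widehat{Z}_n$ and $\widehat{Z}_{n+1}$ gives precisely
$$S_t = \widehat{b}^{\,n+1}_{t-n} - \widehat{b}^{\,n}_{t-n} + (n+1-t)\,\widehat{Z}_n + (t-n)\,\widehat{Z}_{n+1},$$
which matches \eqref{cstPT} under the identification $b^n \leftrightarrow \widehat{b}^{\,n}$, $Z_n \leftrightarrow \widehat{Z}_n$. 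Combined with the distributional match in the previous paragraph, this gives equality in law of $(Z_t;t\ge 0)$ and $(S_t;t\ge 0)$.

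There is no serious obstacle; the only point requiring a moment of care is the simultaneous independence in (iii) across all indices, but because the full family is centered Gaussian the check reduces to a short covariance calculation using $\mathbb{E}[B_r B_s] = \min(r,s)$. An equally fast (and purely computational) alternative would be to note that both $(S_t)$ and $(Z_t)$ are continuous centered Gaussian processes and to verify directly that $\mathbb{E}[Z_s Z_t] = \max(1 - |s-t|,0)$; I prefer the pathwise approach above because it exhibits the coupling explicitly and makes the role of the bridges transparent.
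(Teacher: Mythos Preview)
Your proof is correct, but it takes a genuinely different route from the paper's. The paper argues purely at the level of covariances: since both $(Z_t)$ and $(S_t)$ are centered Gaussian processes, it suffices to check that $\mathbb{E}[Z_{t_1}Z_{t_2}]=\max(1-|t_1-t_2|,0)$, which the authors do by splitting into the three cases $t_1,t_2\in[n,n+1)$, $t_1\in[n,n+1)$ with $t_2\in[n+1,n+2)$, and $t_2\ge n+2$. Your argument instead builds an explicit coupling: you realize the bridges and endpoint Gaussians directly from $B$ via $\widehat b^{\,n}_s=B_{n+s}-B_n-s(B_{n+1}-B_n)$ and $\widehat Z_n=B_{n+1}-B_n$, and then verify the pathwise identity $S_t=\widehat b^{\,n+1}_{t-n}-\widehat b^{\,n}_{t-n}+(n+1-t)\widehat Z_n+(t-n)\widehat Z_{n+1}$. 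The covariance approach is shorter to write down and requires no structural input beyond Gaussianity; your coupling approach is more informative, since it exhibits \eqref{cstPT} as a literal decomposition of $S$ rather than merely a distributional one, and it makes transparent why consecutive Slepian bridges share a common $\widehat b^{\,n+1}$ (cf.\ the remark following the proposition). The one point you flag---joint independence of $\{\widehat b^{\,n},\widehat Z_n\}_n$---is indeed routine: the pair $(\widehat b^{\,n},\widehat Z_n)$ is measurable with respect to $(B_{n+s}-B_n)_{0\le s\le 1}$, these blocks are independent across $n$, and within each block the bridge is independent of the endpoint increment.
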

\begin{proof} Note that $(Z_t; t \geq 0)$ and $(S_t; t \geq 0)$ are centered Gaussian processes. It suffices to show that $(Z_t; t \geq 0)$ and $(S_t; t \geq 0)$ have the same covariance function. Let $t_2 \geq t_1 \geq 0$. Recall that $\mathbb{E}[S_{t_1}S_{t_2}]=\max(1-t_2+t_1,0)$. From the construction \eqref{cstPT} of $(Z_t; t \geq 0)$, we know that $Z_{t_1}$ and $Z_{t_2}$ are independent if $t_1 \in [n,n+1)$ and $t_2 \geq n+2$ for some $n \in \mathbb{N}$. In this case, $\mathbb{E}[Z_{t_1}Z_{t_2}]=0$. The other cases are:
\begin{itemize}
\item
$t_1,t_2 \in [n,n+1)$ for some $n \in \mathbb{N}$. Then
\begin{align*}
\mathbb{E}[Z_{t_1}Z_{t_2}] &=\mathbb{E}[b_{t_1-n}^{n+1}b_{t_2-n}^{n+1}] + \mathbb{E}[b_{t_1-n}^nb_{t_2-n}^n] \\
                                                & \quad \quad \quad \quad \quad +(n+1-t_1)(n+1-t_2)\mathbb{E}Z_n^2+(t_1-n)(t_2-n)\mathbb{E}Z_{n+1}^2 \\
                                                &=2(t_1-n)(n+1-t_2) + (n+1-t_1)(n+1-t_2) + (t_1-n)(t_2-n) \\
                                                &=1-t_2+t_1.
\end{align*}
\item
$t_1 \in [n,n+1)$ and $t_2 \in [n+1,n+2)$ for some $n \in \mathbb{N}$. Then
$$\mathbb{E}[Z_{t_1}Z_{t_2}]=-\mathbb{E}[b^{n+1}_{t_1-n}b^{n+1}_{t_2-n-1}]+(t_1-n)(n+2-t_2) \mathbb{E}Z_{n+1}^2.$$
When $t_2-t_1 \geq 1$, we obtain:
$$\mathbb{E}[Z_{t_1}Z_{t_2}]=-(t_1-n)(n+2-t_2)+(t_1-n)(n+2-t_2) =0.$$
When $t_2-t_1 <1$, we obtain:
$$\mathbb{E}[Z_{t_1}Z_{t_2}]=-(n+1-t_1)(t_2-n-1)+(t_1-n)(n+2-t_2) =1-t_2+t_1.$$
\end{itemize}
Putting all pieces together, we have $\mathbb{E}[Z_{t_1}Z_{t_2}]=\max(1-t_2+t_1,0)=\mathbb{E}[S_{t_1}S_{t_2}]$.  
\end{proof}
\begin{remark}
In particular, the proposition shows that given the triple of i.i.d. standard normal variables $(S_n, S_{n+1}, S_{n+2} )$, the two standard Brownian bridges derived from $(S_{n+u}; 0 \leq u \leq 1)$ and $(S_{n+1+u}; 0 \leq u \leq 1)$ by subtracting off the lines between endpoints:
$$\left(\frac{S_{n+u}-(1-u)S_n-uS_{n+1}}{\sqrt{2}}; 0 \leq u \leq 1 \right)~\mbox{and} ~ \left(\frac{S_{n+1+u}-(1-u)S_{n+1}-uS_{n+2}}{\sqrt{2}}; 0 \leq u \leq 1 \right)$$
are not conditionally independent.
\end{remark}

For $w \in \mathcal{C}[0,1]$, let $F:=\inf\{t \geq 0; w_t=0\}$ be the time of first hit to $0$, and $G:=\sup\{t \leq 1; w_t=0\}$ be the time of last exit from $0$ on the unit interval. From Proposition \ref{consts}, the Slepian process on $[0,1]$ can be constructed by first picking independently $S_0,S_1 \sim \mathcal{N}(0,1)$, and then filling in a $\sqrt{2}$-Brownian bridge from $S_0$ to $S_1$. This bridge construction provides a proof of Theorem \ref{main}. 
\begin{proof}[Proof of Theorem \ref{main}] The first part of the statement is quite straightforward from Proposition \ref{consts} and the discussion above. To finish the proof, we compute the $\mathbb{P}^{\bf S}$-joint distribution of the quadruple $(w_0,w_1,F,G)$ on the event $\{0<F<G<1\}$.
\begin{equation}
\label{alt}
\mathbb{P}^{\bf S}(w_0 \in dx,w_1 \in dy,F \in da,G \in db) =\frac{dxdy}{2 \pi} \exp\left(-\frac{x^2+y^2}{2}\right) \mathbb{P}^{\bf \widetilde{W}_{x \rightarrow y}}(F \in da,G \in db),
\end{equation}
where $\mathbb{P}^{\bf \widetilde{W}_{x \rightarrow y}}$ is the distribution of $\widetilde{B}$ defined as in \eqref{Btil}, conditioned to start at $x$ and end at $y$. In addition,
\begin{align}
\label{TGJ}
&~~ \quad \mathbb{P}^{\bf \widetilde{W}_{x \rightarrow y}}(F \in da,G \in db) \notag\\
&=\frac{|x|}{\sqrt{4 \pi(1-a)a^3}} \exp\left(-\frac{y^2}{4(1-a)}-\frac{x^2}{4a}+\frac{(x-y)^2}{4}\right) \cdot \mathbb{P}^{\bf \widetilde{W}_{x \rightarrow y}}(G \in db|F \in da) \notag\\
&=\frac{|x|}{\sqrt{4 \pi(1-a)a^3}} \exp\left(-\frac{y^2}{4(1-a)}-\frac{x^2}{4a}+\frac{(x-y)^2}{4}\right) \notag\\
& \quad \quad \quad \quad \quad \quad \quad \quad \quad  \quad \quad \quad \quad \cdot \frac{|y|\sqrt{1-a}}{\sqrt{4 \pi (b-a)(1-b)^3}} \exp\left(\frac{y^2}{4(1-a)}-\frac{y^2}{4(1-b)}\right) \notag\\
&=\frac{|xy|}{4 \pi \sqrt{(b-a)a^3(1-b)^3}} \exp\left(-\frac{x^2}{4a}-\frac{y^2}{4(1-b)}+\frac{(x-y)^2}{4}\right).
\end{align}
Injecting \eqref{TGJ} into \eqref{alt}, we obtain the formula \eqref{bigdens}. 
\end{proof}

By integrating over $S_0 \in dx$ and $S_1 \in dy$ in the formula \eqref{bigdens}, we get:
\begin{multline}
\label{220}
\mathbb{P}(F \in da, G \in db) \\ =\frac{2}{\pi \sqrt{b-a}}\left[\frac{1}{(2+a-b)\sqrt{a(1-b)}} +\frac{1}{\sqrt{(2+a-b)^3}} \arctan \sqrt{\frac{a(1-b)}{2+a-b}}\right],
\end{multline}
and integrating further \eqref{220} over $G \in db$, we obtain:
$$\mathbb{P}(F \in da) =\frac{1}{\pi}\sqrt{\frac{2-a}{a}}da \quad \mbox{for}~0<a<1,$$
which agrees with the formula \eqref{explicitT} found in Proposition \ref{21}.

In the rest of the subsection, we give yet another proof of Theorem \ref{main}. We start by deriving the formula \eqref{bigdens} from the absolute continuity relation \eqref{abssw}.
\begin{proof}[Proof of \eqref{bigdens} by \eqref{abssw}] We first compute the $\mathbb{P}^{\bf \widetilde{W}}$-joint distribution of $(w_0,w_1,F,G)$, where $\mathbb{P}^{\bf \widetilde{W}}$ is the distribution of $\widetilde{B}$ on $[0,1]$ defined as in \eqref{Btil}. 
\begin{align}
&~~\quad  \mathbb{P}^{\bf \widetilde{W}}(w_0 \in dx,w_1 \in dy,F \in da,G \in db) \notag\\
&=\frac{dx}{\sqrt{4 \pi}} \exp\left(-\frac{x^2}{4}\right) \cdot \mathbb{P}^{\bf \widetilde{W}}(w_1 \in dy, F \in da, G \in db|w_0 \in dx) \notag\\
&=\frac{dx}{\sqrt{4 \pi}} \exp\left(-\frac{x^2}{4}\right) \cdot \frac{|x|da}{\sqrt{4 \pi a^3}}\exp\left(-\frac{x^2}{4a}\right) \cdot \mathbb{P}^{\bf \widetilde{W}}(w_1 \in dy, G \in db|w_0 \in dx, F \in da) \notag\\
&\label{213}=\frac{dx}{\sqrt{4 \pi}} \exp\left(-\frac{x^2}{4}\right) \cdot \frac{|x|da}{\sqrt{4 \pi a^3}}\exp\left(-\frac{x^2}{4a}\right) \cdot \frac{|y|dydb}{4 \pi \sqrt{(b-a)(1-b)^3}}\exp\left(-\frac{y^2}{4(1-b)}\right) \\
&\label{214}=\frac{|xy|}{16 \pi^2 \sqrt{(b-a)a^3(1-b)^3}}\exp\left(-\frac{x^2}{4}-\frac{x^2}{4a}-\frac{y^2}{4(1-b)}\right)\,dx\,dy\,da\,db,
\end{align}
where \eqref{213} can be read from Revuz and Yor \cite[Exercise $3.23$, Chapter III]{RY}. Now \eqref{214} combined with \eqref{abssw} yields the desired result. 
\end{proof}

We need the following elementary result regarding the change of measures.
\begin{lemma}
\label{condcom}
Assume that $\mathbb{P}$ and $\mathbb{Q}$ are two probability measures on $(\Omega,\mathcal{F})$ such that
$$\frac{d \mathbb{Q}}{d \mathbb{P}}(w):=f(Z),$$
where $Z:=Z(w)$ is a random element and $f(Z)$ is the Radon-Nikodym derivative of $\mathbb{Q}$ with respect to $\mathbb{P}$. Futhermore,
\begin{enumerate}
\item
Let $A \in \sigma(Z)$ be an event determined by $Z$, with $\mathbb{P}(A)>0$ and $\mathbb{Q}(A)>0$;
\item
Let $Y$ be another random element such that under $\mathbb{P}$, $Y$ is independent of $Z$ given $A$ (such random element $Y$ need only be defined conditional on $A$).
\end{enumerate}
Then the $\mathbb{Q}$-distribution of $Y$ given $A$ is the same as the $\mathbb{P}$-distribution of $Y$ given $A$. And under $\mathbb{Q}$, $Y$ is independent of $Z$ given $A$.
\end{lemma}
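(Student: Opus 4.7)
The plan is to derive both conclusions by direct computation using the Radon--Nikodym formula, and then converting expectations under $\mathbb{Q}$ into expectations under $\mathbb{P}$ against the weight $f(Z)$. The key observation to exploit is that $A \in \sigma(Z)$ and the weight $f(Z)$ are both $\sigma(Z)$-measurable, so they behave as constants once we condition on $A$ in a way that separates $Y$ from $Z$.

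First, for any bounded measurable test function $g$ applied to $Y$, I would write
\begin{equation*}
\mathbb{E}_{\mathbb{Q}}[g(Y)\,\mathbf{1}_A] \;=\; \mathbb{E}_{\mathbb{P}}[g(Y)\,\mathbf{1}_A\,f(Z)] \;=\; \mathbb{P}(A)\,\mathbb{E}_{\mathbb{P}}[g(Y)\,f(Z)\mid A].
\end{equation*}
Since $Y$ is independent of $Z$ given $A$ under $\mathbb{P}$ and $f(Z)\,\mathbf{1}_A$ is $\sigma(Z)$-measurable, this factors as $\mathbb{P}(A)\,\mathbb{E}_{\mathbb{P}}[g(Y)\mid A]\,\mathbb{E}_{\mathbb{P}}[f(Z)\mid A]$. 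Setting $g\equiv 1$ identifies $\mathbb{Q}(A) = \mathbb{P}(A)\,\mathbb{E}_{\mathbb{P}}[f(Z)\mid A]$, which is automatically strictly positive by assumption. Dividing through yields $\mathbb{E}_{\mathbb{Q}}[g(Y)\mid A] = \mathbb{E}_{\mathbb{P}}[g(Y)\mid A]$, which is the first conclusion.

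For the second conclusion, I would repeat the same manipulation with an additional bounded measurable test function $h$ applied to $Z$:
\begin{equation*}
\mathbb{E}_{\mathbb{Q}}[g(Y)h(Z)\,\mathbf{1}_A] \;=\; \mathbb{P}(A)\,\mathbb{E}_{\mathbb{P}}[g(Y)\mid A]\,\mathbb{E}_{\mathbb{P}}[h(Z)f(Z)\mid A],
\end{equation*}
again by pulling the $\sigma(Z)$-measurable factor out and using the conditional independence under $\mathbb{P}$. Using the relation between $\mathbb{Q}$- and $\mathbb{P}$-conditional expectations established in the previous paragraph (namely $\mathbb{E}_{\mathbb{Q}}[h(Z)\mid A] = \mathbb{E}_{\mathbb{P}}[h(Z)f(Z)\mid A]/\mathbb{E}_{\mathbb{P}}[f(Z)\mid A]$) and the already-derived formula $\mathbb{E}_{\mathbb{Q}}[g(Y)\mid A] = \mathbb{E}_{\mathbb{P}}[g(Y)\mid A]$, this product decomposes as $\mathbb{Q}(A)\,\mathbb{E}_{\mathbb{Q}}[g(Y)\mid A]\,\mathbb{E}_{\mathbb{Q}}[h(Z)\mid A]$, giving the conditional independence of $Y$ and $Z$ given $A$ under $\mathbb{Q}$.

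There is no real obstacle here; the lemma is a bookkeeping statement. The only subtlety worth being careful about is the hypothesis that $Y$ need only be defined on $A$, so conditional expectations $\mathbb{E}_{\mathbb{P}}[\,\cdot\mid A]$ should be treated as probability measures on $A$ from the outset, and monotone-class arguments should be used to extend the identity from indicator functions $g, h$ to all bounded measurable ones.
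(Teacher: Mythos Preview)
Your proposal is correct and follows essentially the same route as the paper: both compute $\mathbb{E}_{\mathbb{Q}}[g(Y)\mid A]$ by passing to $\mathbb{P}$ via the density $f(Z)$, factor using the $\mathbb{P}$-conditional independence of $Y$ and $Z$ given $A$, identify $\mathbb{Q}(A)=\mathbb{P}(A)\,\mathbb{E}_{\mathbb{P}}[f(Z)\mid A]$, and then repeat with an extra factor $h(Z)$ to obtain the conditional independence under $\mathbb{Q}$. The only cosmetic difference is that the paper writes the ratio $\mathbb{P}(A)/\mathbb{Q}(A)$ up front, whereas you work with $\mathbb{E}_{\mathbb{Q}}[g(Y)\mathbf{1}_A]$ and divide at the end.
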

\begin{proof}Take $g,h: (\Omega,\mathcal{F}) \rightarrow (\mathbb{R},\mathcal{B}(\mathbb{R}))$ two bounded measurable functions. First note that
\begin{align}
\mathbb{E}^{\mathbb{Q}}[g(Y)|A] &=\frac{\mathbb{P}(A)}{\mathbb{Q}(A)} \mathbb{E}^{\mathbb{P}}[g(Y)f(Z)|A] \notag\\
                                                          &\label{exp1}= \frac{\mathbb{P}(A)}{\mathbb{Q}(A)}  \mathbb{E}^{\mathbb{P}}[f(Z)|A] \cdot \mathbb{E}^{\mathbb{P}}[g(Y)|A] \\
                                                          &\label{equalpq}=\mathbb{E}^{\mathbb{P}}[g(Y)|A],
\end{align}
where \eqref{exp1} is due to the $\mathbb{P}$-conditional independence of $Y$ and $Z$ given $A$. In addition, 
\begin{align}
\mathbb{E}^{\mathbb{Q}}[g(Y)h(Z)|A] &=\frac{\mathbb{P}(A)}{\mathbb{Q}(A)} \mathbb{E}^{\mathbb{P}}[g(Y)h(Z)f(Z)|A] \notag\\
                                                                &\label{exp2}=\frac{\mathbb{P}(A)}{\mathbb{Q}(A)} \mathbb{E}^{\mathbb{P}}[h(Z)f(Z)|A] \cdot  \mathbb{E}^{\mathbb{P}}[g(Y)|A] \\
                                                                &\label{fin}= \mathbb{E}^{\mathbb{Q}}[h(Z)|A] \cdot \mathbb{E}^{\mathbb{Q}}[g(Y)|A],
\end{align}
where \eqref{exp2} is again due to the $\mathbb{P}$-conditional independence of $Y$ and $Z$ given $A$, and \eqref{fin} follows readily from \eqref{equalpq}.  \end{proof}

\begin{proof}[Alternative proof of Theorem \ref{main}] We borrow the notations from Lemma \ref{condcom} in our setting: $\mathbb{P}:=\mathbb{P}^{\bf \widetilde{W}}$, $\mathbb{Q}:=\mathbb{P}^{\bf S}$, $Z:=(w_0,w_1,F,G)$ and $A:=\{0<F<G<1\}$. Conditional on $A$, define $Y^{(2)}$ to be the scaled bridge on $[F,G]$, that is
$$Y^{(2)}_u:=\frac{w_{F+u(G-F)}}{\sqrt{G-F}} \quad \mbox{for}~0 \leq u \leq 1.$$
It is well-known that under $\mathbb{P}^{\bf \widetilde{W}}$ and on the event $\{0<F<G<1\}$, $(Y^{(2)}_u/\sqrt{2}; 0 \leq u \leq 1)$ is standard Brownian bridge, independent of $(w_0,w_1,F,G)$, see e.g. L\'evy \cite{Levy} or Revuz and Yor \cite[Exercise $3.8$, Chapter XII]{RY}. Then by Lemma \ref{condcom}, under $\mathbb{P}^{\bf S}$ and on the event $\{0<F<G<1\}$, $(Y_u^{(2)}/\sqrt{2};0 \leq u \leq 1)$ is also standard Brownian bridge, independent of $(w_0,w_1,F,G)$. In addition, define $Y^{(1)}:=(w_u;0 \leq u \leq F )$ and $Y^{(3)}:=(w_u; G \leq u \leq 1)$. Similarly, under $\mathbb{P}^{\bf \widetilde{W}}$ and on the event $\{0<F<G<1\}$,
\begin{itemize} 
\item $Y^{(1)}/\sqrt{2}$ is Brownian first passage bridge from $(0,w_0/\sqrt{2})$ to $(F,0)$, see e.g. Bertoin et al \cite{BCP};
\item $|Y^{(3)}|/\sqrt{2}$ is reversed Brownian first passage bridge from $(1,|w_1|/\sqrt{2})$ to $(G,0)$, that is the three-dimensional Bessel bridge from $(G,0)$ to $(1,|w_1|/\sqrt{2})$, see e.g. Biane and Yor \cite{BY}.
\end{itemize}
Moreover, these two processes are conditionally independent given $(w_0,w_1,F,G)$. It suffices to apply again Lemma \ref{condcom} to conclude. 
\end{proof}

In view of the Brownian characteristics,it would be interesting to find a construction of the conditioned Slepian process $(S_t/\sqrt{2}; 0 \leq t \leq 1)$ with $\{0<F<G<1\}$ by some path transformation of standard Brownian motion/bridge. We leave the interpretation open for future investigation.
 \subsection{A Palm-It\^{o} measure related to Slepian zeros}
 \label{23}
To capture the structure of the Slepian zero set, an alternative way is to study the Slepian excursions between consecutive zeros. Let $E$ be the space of excursions defined by
$$E:=\{\epsilon \in \mathcal{C}[0,\infty); \epsilon_0=0~\mbox{and}~\epsilon_t=0~\mbox{for all}~t \geq \zeta(\epsilon) \in ]0,\infty[\},$$
where $\zeta(\epsilon):=\inf\{t > 0; \epsilon_t = 0\}$ is the lifetime of the excursion $\epsilon \in E$. Following Pitman \cite{Pitmanexcursion}, the gaps between zeros of a process $(Z_t;t \geq 0)$ with $\sigma$-finite invariant measure can be described by a {\em Palm-It\^{o} measure} ${\bf n}^Z$, defined on the space of excursions $E$ as
\begin{equation*}
{\bf n}^Z(d\epsilon):=  \mathbb{E}\#\{0<t<1; Z_t=0~\mbox{and}~ e(t)\in d\epsilon\},
\end{equation*}
where $e(t)$ is the excursion starting at time $t>0$ in the process $Z$. The following result of {\em last exit decomposition} for stationary processes is read from Pitman \cite[Theorem $1$(iii)]{Pitmanexcursion}.
\begin{theorem} \cite{Pitmanexcursion} \label{Pitex}
Let $\mathbb{P}^{\bf Z}$ govern a stationary process $(Z_t; t \geq 0)$ with $\sigma$-finite invariant measure. For $w \in \mathcal{C}[0,1]$, let
\begin{equation}
\label{gtt}
G_t:=\sup\{u \leq t; w_u=0\},
\end{equation} 
be the last exit time from $0$ before time $t$, and $e(G_t)$ be the excursion straddling time $t>0$ in the path. Then
\begin{equation}
\label{lasts}
\mathbb{P}^{\bf Z}(t-G_t \in da, e(G_t) \in d\epsilon)=da 1(\zeta(\epsilon)>a) {\bf n}^Z(d\epsilon),
\end{equation}
\end{theorem}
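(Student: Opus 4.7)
The plan is to verify the asserted measure identity by testing against an arbitrary nonnegative measurable $F:[0,\infty)\times E\to[0,\infty)$, combining the stationarity of $Z$ with a Campbell--Palm exchange applied to the marked point process of excursion left endpoints. By stationarity, the joint law of $(t-G_t,e(G_t))$ does not depend on $t$, so averaging over $t\in(0,1)$ is harmless:
\[\mathbb{E}[F(t-G_t,e(G_t))] = \mathbb{E}\int_0^1 F(u-G_u,\,e(G_u))\,du.\]

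Almost surely, the complement of the zero set of $Z$ decomposes as a countable disjoint union of open gaps $(s,\,s+\zeta(e(s)))$, indexed by their left endpoints $s$. For $u$ in such a gap, $G_u=s$ and $e(G_u)=e(s)$; partitioning the integral and substituting $a=u-s$ then yields
\[\int_0^1 F(u-G_u,e(G_u))\,du = \sum_{s\in[0,1)}\int_0^{\zeta(e(s))\wedge(1-s)} F(a,e(s))\,da + R_0,\]
where $R_0$ accounts for the single excursion straddling the origin.

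Now apply the Campbell formula: by the definition of ${\bf n}^Z$ and stationarity, for any nonnegative $h:E\to[0,\infty)$,
\[\mathbb{E}\sum_{s\in(0,1)} h(e(s)) = \int_E h(\epsilon)\,{\bf n}^Z(d\epsilon).\]
Combined with Fubini, and after canceling the boundary contribution $R_0$ by first passing to intervals $(-M,M)$ and letting $M\to\infty$ (which requires the finite-mean-gap relation $\int_E \zeta\,d{\bf n}^Z = 1$), one obtains
\[\mathbb{E}\int_0^1 F(u-G_u,e(G_u))\,du = \int_E {\bf n}^Z(d\epsilon)\int_0^{\zeta(\epsilon)} F(a,\epsilon)\,da,\]
which disintegrates to the identity \eqref{lasts}.

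The main obstacle is making the Campbell exchange rigorous when the zero set of $Z$ is uncountable (as for Brownian-type processes such as the Slepian process): the formal ``counting'' in the definition of ${\bf n}^Z$ must be read as a $\sigma$-finite random measure on excursion left endpoints, canonically parametrized by the local time of $Z$ at $0$ via It\^o's excursion theorem. One must verify $\sigma$-finiteness --- typically by restricting to excursions of lifetime $\geq \varepsilon$, applying Campbell on this thinning, and invoking monotone convergence as $\varepsilon\downarrow 0$ --- before the Fubini exchange is legal.
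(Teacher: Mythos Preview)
The paper does not prove this theorem: it is quoted verbatim from Pitman's \emph{Stationary excursions} \cite[Theorem~1(iii)]{Pitmanexcursion}, so there is no in-paper proof to compare against. Your sketch is essentially the standard Palm/Campbell argument that underlies the original result, and the strategy---average over $t$ by stationarity, decompose Lebesgue measure on $(0,1)$ into excursion intervals, then invoke the very definition of ${\bf n}^Z$ as an intensity measure---is the right one.

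Two points deserve tightening. First, the claim that the law of $(t-G_t,e(G_t))$ is constant in $t$ is only clean in the two-sided stationary setting; on $[0,\infty)$ the excursion straddling time $t$ may begin at $0$, and your boundary term $R_0$ together with the truncation $\zeta(e(s))\wedge(1-s)$ must be shown to cancel exactly, not merely asymptotically. The cleanest route is to work on $\mathbb{R}$ from the start and evaluate at $t=0$, where the excursion straddling the origin is size-biased by its lifetime---this is precisely the content of \eqref{lasts}. Second, the relation $\int_E \zeta\,d{\bf n}^Z=1$ that you invoke presumes the zero set has Lebesgue measure zero almost surely; under a $\sigma$-finite invariant measure this need not hold (and the measure $\mathbb{P}^{\bf Z}$ need not even be a probability), so that normalization should be justified or avoided. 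Your closing paragraph on the uncountable-zero-set case and the $\varepsilon$-thinning is the correct way to handle the $\sigma$-finiteness of the excursion point process.
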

\begin{remark} ~{\em 
\begin{enumerate}
\item
Theorem \ref{Pitex} extends a result of Bismut \cite{Bismut}, where $Z$ is Brownian motion with invariant Lebesgue measure. In this case, the Palm-It\^{o} measure $\textbf{n}^Z$ is just It\^{o}'s excursion law $\textbf{n}$. 
\item
There is an analog of last exit decomposition \eqref{lasts} for standard Brownian motion. Let $\mathbb{P}^{\bf W}$ be Wiener measure on $\mathcal{C}[0,\infty)$, then
\begin{equation*}
\mathbb{P}^{\bf W}(t-G_t \in da, e(G_t) \in d\epsilon)=da \frac{1}{\sqrt{2 \pi(t-a)}} 1(\zeta(\epsilon)>a){\bf n}(d\epsilon),
\end{equation*}
where ${\bf n}$ is It\^{o}'s excursion law. The result is deduced from Getoor and Sharpe \cite{GSde}, who gave a last exit decomposition for general Markov processes.
\end{enumerate}
}
\end{remark}

Now we apply Theorem \ref{Pitex} to the Slepian process $(S_t; t\geq 0)$. Let $\mathbb{P}^{\textbf{S}}$ be the distribution of the Slepian process, we have:
\begin{equation}
\label{215}
\mathbb{P}^{\textbf{S}}(t-G_t \in da, e(G_t)\in d\epsilon)=da 1(\zeta(\epsilon)>a)\textbf{n}^S(d\epsilon),
\end{equation}
where
\begin{equation}
\label{nsit}
\textbf{n}^S(d\epsilon):=\mathbb{E}\#\{0<t<1; S_t=0~\mbox{and}~e(t) \in d\epsilon\}.
\end{equation}
\quad As shown in the following lemma, the last exit time $G_t$ is closely related to the first passage time $F$ defined as in \eqref{Ts}. Here we adopt the convention that $\sup \emptyset:=0$.
\begin{lemma}
\label{gT}
Let $t > 0$. Under $\mathbb{P}^{\textbf{S}}$, $t-G_t$ has the same distribution as $F \cdot 1(F \leq t )+t \cdot 1(F>t)$, where $G_t$ is defined by \eqref{gtt} and $F$ by \eqref{Ts}.
\end{lemma}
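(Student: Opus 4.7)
\textbf{Proof proposal for Lemma \ref{gT}.} The plan is to exploit the time-reversibility of the Slepian process on any interval $[0,t]$. Being a stationary centered Gaussian process on $\mathbb{R}$ with the symmetric covariance function $R^S(h)=\max(1-|h|,0)$, the two-sided Slepian process $(S_u; u \in \mathbb{R})$ has the same finite-dimensional distributions as its time-reversal $(S_{-u}; u \in \mathbb{R})$. Combined with stationarity, this gives for every fixed $t>0$ the path-level equality in distribution
\begin{equation*}
(S_u;\,0\le u\le t)\ \stackrel{d}{=}\ (S_{t-u};\,0\le u\le t).
\end{equation*}

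Next I would apply this to the functional $\Phi(w):=\inf\{u\in[0,t]:w_u=0\}$ with the convention $\inf\emptyset=+\infty$. For the original path, $\Phi(S)=F$ on the event $\{F\le t\}$ and $\Phi(S)=+\infty$ on $\{F>t\}$. For the reversed path $\tilde S_u:=S_{t-u}$, the zero set in $[0,t]$ is the image of the zero set of $S$ in $[0,t]$ under the reflection $u\mapsto t-u$, so
\begin{equation*}
\Phi(\tilde S)=t-\sup\{u\in[0,t]:S_u=0\}=t-G_t\quad\text{on }\{F\le t\},
\end{equation*}
while on $\{F>t\}$ the set $\{u\in[0,t]:S_u=0\}$ is empty for $\tilde S$ as well, so $\Phi(\tilde S)=+\infty$. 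Since reversibility forces $\Phi(S)\stackrel{d}{=}\Phi(\tilde S)$, and since $\{F\le t\}$ is determined by $\Phi(S)$ (equivalently, by $\Phi(\tilde S)$, which equals $t-G_t$ on that event), the joint distributions on the two events match up: $\mathbb{P}^{\bf S}(t-G_t\in\cdot,\,F\le t)=\mathbb{P}^{\bf S}(F\in\cdot,\,F\le t)$ and $\mathbb{P}^{\bf S}(F>t)=\mathbb{P}^{\bf S}(G_t=0)$.

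Finally, on the event $\{F>t\}$ the convention $\sup\emptyset=0$ forces $t-G_t=t$, which matches the second term $t\cdot 1(F>t)$ in the variable $F\cdot 1(F\le t)+t\cdot 1(F>t)$. Assembling the two cases yields the claimed distributional identity. The only real subtlety is the bookkeeping of the empty-set convention for $G_t$, which needs to be matched with the $+\infty$ convention for $\Phi$ so that the ``no zero in $[0,t]$'' event is handled consistently on both sides; aside from this, no computation is required beyond the symmetry of $R^S$.
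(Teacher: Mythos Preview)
Your proposal is correct and follows essentially the same approach as the paper: the paper's proof is a one-line observation that $(S_u;\,0\le u\le t)\stackrel{d}{=}(S_{t-u};\,0\le u\le t)$, which is clear from the covariance function, and you have simply spelled out in detail how this time-reversal identity transports the first-zero functional to the last-zero functional (together with careful handling of the empty-set conventions). There is no substantive difference between the two arguments.
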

\begin{proof} It suffices to observe that $(S_u; 0 \leq u \leq t)$ has the same distribution as $(S_{t-u}; 0 \leq u \leq t)$. This is clear from the covariance function of the Slepian process. 
\end{proof}
\begin{proposition}
\label{nct}
For $a>0$,
\begin{equation}
\label{levy}
{\bf n}^{S}(\zeta>a) = \mathbb{P}(F \in da)/da,
\end{equation}
where ${\bf n}^{S}$ is defined as in \eqref{nsit} and $F$ is defined as in \eqref{Ts}. In particular,
\begin{equation}
\label{explicitl}
{\bf n}^{S}(\zeta>a) = \frac{1}{\pi}\sqrt{\frac{2-a}{a}} \quad \mbox{for}~0<a<1.
\end{equation}
\end{proposition}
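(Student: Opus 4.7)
The plan is to combine the Slepian specialization \eqref{215} of the last exit decomposition with the time-reversal identity in Lemma \ref{gT}, and then to substitute the explicit density of $F$ from Proposition \ref{prop21}.

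First I would fix any $t > 0$ and integrate \eqref{215} over all excursions $\epsilon \in E$. Since $\mathbf{n}^S$ is the mean of a simple counting measure on $[0,1]$, it is $\sigma$-finite, so Fubini applies and yields the marginal
\begin{equation*}
\mathbb{P}^{\mathbf{S}}(t - G_t \in da) \;=\; \mathbf{n}^S(\zeta > a)\, da \qquad (0 < a < t).
\end{equation*}
Thus the tail of $\mathbf{n}^S$ is identified, via a generalized L\'evy-type formula, with the density of the last exit gap $t - G_t$.

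Next I would invoke Lemma \ref{gT}: under $\mathbb{P}^{\mathbf{S}}$, the variable $t - G_t$ has the same law as $F\cdot 1(F \le t) + t \cdot 1(F > t)$. Restricting to the open interval $(0,t)$ removes the atom at $t$, so on $(0,t)$ the distribution of $t - G_t$ coincides with that of $F$. Combining with the previous display gives
\begin{equation*}
\mathbf{n}^S(\zeta > a)\, da \;=\; \mathbb{P}(F \in da) \qquad \text{for } 0 < a < t,
\end{equation*}
which is \eqref{levy}; since $t > 0$ is arbitrary the identity holds for all $a > 0$.

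Finally, the explicit formula \eqref{explicitl} on $0 < a < 1$ is just a substitution: by Proposition \ref{prop21}, $\mathbb{P}(F \in da) = \frac{1}{\pi}\sqrt{(2-a)/a}\, da$, so $\mathbf{n}^S(\zeta > a) = \frac{1}{\pi}\sqrt{(2-a)/a}$ on the unit interval. There is no real obstacle here; the only point deserving care is to check that $\sigma$-finiteness of $\mathbf{n}^S$ justifies the marginalization that collapses the excursion variable in \eqref{215}. Note that the restriction $0 < a < 1$ in \eqref{explicitl} is forced, not by our argument, but by the fact that the explicit density of $F$ is available only on $[0,1]$; the general identity \eqref{levy} remains valid for all $a > 0$.
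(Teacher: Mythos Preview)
Your proof is correct and follows essentially the same approach as the paper: marginalize \eqref{215} over the excursion to get $\mathbb{P}(t-G_t\in da)=\mathbf{n}^S(\zeta>a)\,da$, apply Lemma \ref{gT} to identify this with $\mathbb{P}(F\in da)$ for $a<t$, and substitute \eqref{explicitT}. The paper's version is terser and does not dwell on the Fubini justification, but the logic is identical.
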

\begin{proof} It follows from \eqref{lasts} that
$${\bf n}^{S}(\zeta>a)da =\mathbb{P}(t-G_t \in da).$$ 
According to Lemma \ref{gT},
$$\mathbb{P}(t-G_t \in da)=\mathbb{P}(F \in da) \quad \mbox{for}~t>a.$$
Then \eqref{levy} is a direct consequence of the above two observations. Combining with the formula \eqref{explicitT}, we derive further \eqref{explicitl}. 
\end{proof}

From the {\em Palm-L\'evy measure} \eqref{explicitl}, we can see how the Slepian zero set restricted to $[0,1]$ differs from a plain Brownian zero set, where {\em It\^o's excursion law} is given by
$${\bf n}(\zeta>a)=\sqrt{\frac{2}{\pi a}} \quad \mbox{for}~a>0.$$
As expected, $\textbf{n}^S(\zeta>a)$ and $\textbf{n}(\zeta>a)$ have asymptotically equivalent tails $a^{-\frac{1}{2}}$ when $a \rightarrow 0^{+}$. Observe a constant factor $\sqrt{\pi}$ between them. This is because the invariant Lebesgue measure of reflected Brownian motion is $\sigma$-finite and there is no canonical normalization. We also refer readers to Pitman and Yor \cite[Section $2$]{PYl} for the Palm-L\'{e}vy measure of the gaps between zeros of squared Ornstein-Uhlenbeck processes.
\section{Brownian bridge embedded in Brownian motion}
\label{3}
In this section, we prove Theorem \ref{mainbis}, that is embedding Brownian bridge $(b^0_u; 0 \leq u \leq 1)$ into Brownian motion $(B_t; t \geq 0)$ by a random translation of origin in spacetime. The problem of embedding continuous paths into Brownian motion was broadly discussed in Pitman and Tang \cite{PTpattern}, to which we refer readers for a bird's-eye view.

 Recall that $(X_t; t \geq 0)$ is the moving-window process associated to Brownian motion defined as in \eqref{mmm}. We aim to find a random time $T \geq 0$ such that $X_T$ has the same distribution as $b^0$. A general result of Rost \cite{Rost2} implies that such a randomized stopping time $T \geq 0$ exists (relative to the filtration of the moving window process,
so $T+1$ would be a randomized stopping time in the Brownian filtration) if and only if
\begin{equation}
\label{crfs}
\lim_{\alpha \rightarrow 0} \sup_{1 \geq g \in \mathcal{S}^{\alpha}} \left(\int g d\mathbb{P}^{\bf W^0} -\int g d \mathbb{P}^{\bf W}\right)=0,
\end{equation}
where $\mathbb{P}^{\bf W}$ is Wiener measure on $\mathcal{C}_0[0,1]$ and $\mathbb{P}^{\bf W^0} $ is Wiener measure pinned to $0$ at time $1$, that is the distribution of Brownian bridge $b^0$. For $\alpha>0$, $\mathcal{S}^{\alpha}$ is the set of $\alpha-$excessive functions, see e.g. the book of Sharpe \cite{Sharpebook} for background. However, the criterion \eqref{crfs} is difficult to check since $\mathbb{P}^{\bf W^0} $ is singular with respect to $\mathbb{P}^{\bf W}$.

We work around the problem in another way, which relies heavily on Palm theory of stationary random measures. Such theory has been successfully developed by the Scandinavian probability school in the last few decades. The book of Thorisson \cite{Thbook} records much of this important work. For technical purposes, we introduce a two-sided Brownian motion $(\widehat{B}_t; t \in \mathbb{R})$, and let
\begin{equation}
\label{tsmw}
\widehat{X}_t:=(\widehat{B}_{t+u}-\widehat{B}_t; 0 \leq u \leq 1) \quad \mbox{for}~t \in \mathbb{R},
\end{equation}
be the moving-window process associated to the two-sided Brownian motion $\widehat{B}$. Note that $(\widehat{X}_t; t \in \mathbb{R})$ is a stationary Markov process with state space $(\mathcal{C}_0[0,1], \mathcal{B})$. Alternatively, $\widehat{X}$ can be viewed as a random element in the space $\mathcal{C}_0[0,1]^{\mathbb{R}}$, to which we assign the metric $\rho$ by 
\begin{equation}
\label{metric}
\rho(x, y):=\sum_{n=0}^{\infty} \frac{1}{2^n} \min \left(\sup_{-n \leq t \leq n} ||x_t-y_t||_{\infty},1 \right) \quad \mbox{for}~x,y \in \mathcal{C}_0[0,1]^{\mathbb{R}} 
\end{equation}
where $\mathcal{C}_0[0,1]$ is equipped with the sup-norm $||\cdot||_{\infty}$. 

Below is the plan for this section:

In Subsection \ref{31}, we provide background on Palm theory of stationary random measures. We define a notion of local times of the $\mathcal{C}_0[0,1]-$valued process $\widehat{X}$ by weak approximation. Furthermore, we show that the $0$-marginal of the Palm measure of local times is Brownian bridge.

 In Subsection \ref{32}, we derive from a result of Last and Thorisson \cite{LT2014} that the Palm probability measure of a jointly stationary random measure associated to $\widehat{X}$ can be obtained by a random time-shift of $\widehat{X}$ itself. In particular, there exists a random time $\widehat{T} \in \mathbb{R}$ such that $\widehat{X}_{\widehat{T}}$ has the same distribution as $(b_u^0; 0 \leq u \leq 1)$.

In Subsection \ref{33}, we prove that if some distribution on $\mathcal{C}_0[0,1]$ can be achieved in the moving-window process $\widehat{X}$ associated to two-sided Brownian motion, then we are able to construct a random time $T \geq 0$ such that $\widehat{X}_T$ has that desired distribution. Theorem \ref{mainbis} follows immediately from the above observations.

 In Subsection \ref{34}, after presenting some results of Last et al. \cite{LMT}, we construct a random time $T \geq 0$ such that $\widehat{X}_T$ has the same distribution as $(b^0_u; 0 \leq u \leq 1)$. The construction also makes use of the local times defined in Subsection \ref{31}. The argument is due to Hermann Thorisson.
\subsection{Local times of $\widehat{X}$ and its Palm measure}
\label{31}
In this subsection, we present background on Palm theory of stationary random measures. To begin with, $(\Omega,\mathcal{F},\mathbb{P})$ is a generic probability space on which random elements are defined.

Let $(Z_t; t \in \mathbb{R}) \in E^{\mathbb{R}}$ be a continuous-time process with a measurable state space $(E, \mathcal{E})$. We further assume that the process $Z$ is path-measurable, that is $E^{\mathbb{R}} \times \mathbb{R}\ni (Z,t) \rightarrow Z_t \in E$ is measurable for all $t \in \mathbb{R}$. See e.g. Appendix of Thorisson \cite{Th92} for more on path-measurability. Let $\xi$ be a random $\sigma-$finite measure on $\mathbb{R}$.

Assume that the pair $(Z,\xi)$ is jointly stationary, that is
\begin{equation}
\label{jsdef}
\theta_s(Z,\xi) \stackrel{(d)}{=} (Z,\xi) \quad \mbox{for all}~s \in \mathbb{R},
\end{equation}
where $\theta_sZ:=(Z_{s+t}; t \in \mathbb{R})$ and $\theta_s \xi(\cdot):=\xi(\cdot+s)$ are usual time-shift operations. Then the {\em Palm measure} $\mathbb{P}_{Z, \xi}$ of the jointly stationary pair $(Z, \xi)$ is defined as follows: for $f: E^{\mathbb{R}} \times \mathcal{M}(\mathbb{R}) \rightarrow \mathbb{R}$ bounded measurable,
$$\mathbb{P}_{Z,\xi}f:=\mathbb{E}\int_0^1 f(\theta_t(Z,\xi))\xi(dt).$$
In the rest of the paper, we only need to care about the $E^{\mathbb{R}}$-marginal of $\mathbb{P}_{Z,\xi}$. That is, for $f:E^{\mathbb{R}} \rightarrow \mathbb{R}$ bounded measurable,
\begin{equation}
\label{palm}
\mathbb{P}_{\xi}f:=\mathbb{E} \int_0^1 f(\theta_tZ)\xi(dt).
\end{equation}
By abuse of language, we call $\mathbb{P}_{\xi}$ defined by \eqref{palm} the {\em Palm measure} of the stationary random measure $\xi$. Thus, $\mathbb{P}_{\xi}$ is a $\sigma-$finite measure on $E^{\mathbb{R}}$. If $\mathbb{P}_{\xi}1=\mathbb{E}\xi[0,1)<\infty$, then the normalized measure $\mathbb{P}_{\xi}/\mathbb{P}_{\xi}1$ is called the {\em Palm probability measure} of $\xi$. So far most of the results have been established for $\mathbb{P}_{Z,\xi}$, but they still hold for the marginal $\mathbb{P}_{\xi}$. We refer readers to Kallenberg \cite[Chapter $11$]{Kallenberg}, Thorisson \cite[Chapter $8$]{Thbook}, Last \cite{Last2008, Last2010} and the thesis of Gentner \cite{Gthesis} for further development on Palm versions of stationary random measures.

In the sequel, we adapt  our problem setting to the above abstract framework. We take the state space $E:=\mathcal{C}_0[0,1]$ equipped with its Borel $\sigma-$field $\mathcal{B}$. Recall that $(\widehat{X}_t; t \in \mathbb{R})$, the moving-window process defined as in \eqref{tsmw}, is a random element in the metric space $(\mathcal{C}_0[0,1]^{\mathbb{R}}, \rho)$ with the distance $\rho$ defined by \eqref{metric}.

For a Borel measurable set $C \subset \mathbb{R}$, let
$$\mathcal{BR}^C:=\{w \in \mathcal{C}_0[0,1];w(1) \in C\}$$
be the set of bridge paths with endpoint in $C$. By stationarity of  $(\widehat{X}_t; t \in \mathbb{R})$, for each fixed $t \in \mathbb{R}$, $\{u \in [t,t+1]; \widehat{X}_u \in \mathcal{BR}^0\}-\{t\}$ has the same distribution as $\{u \in [0,1]; \widehat{X}_u \in \mathcal{BR}^0\}$, which is mutually absolutely continuous relative to that of $\{u \in [0,1]; \widetilde{B}_u=0\}$ by Lemma \ref{abszero}. Here $\widetilde{B}$ is the modified Brownian motion as in \eqref{Btil}. Inspired from the notion of Brownian local times, we define a random $\sigma-$finite measure $\Gamma$ on $\mathbb{R}$ as follows: for $n \in \mathbb{N}$ and $C \subset \mathbb{R}$,
\begin{equation}
\label{localtime}
\Gamma([-n,n] \cap C):=\lim_{\epsilon \rightarrow 0} \sqrt{\frac{\pi}{2}}\frac{1}{\epsilon} \int_{[-n,n] \cap C}1(\widehat{X}_u \in \mathcal{BR}^{[-\epsilon,\epsilon]})du.
\end{equation}

Let us justify that the random measure $\Gamma$ as in \eqref{localtime} is well-defined. Write $C=\cup_{k \in \mathbb{Z}}C_k$ where $C_k:=C \cap [k,k+1]$. We want to show that for each $k \in \mathbb{Z}$,
$$\lim_{\epsilon \rightarrow 0} \frac{1}{\epsilon}\int_{C_k} 1(\widehat{X}_u \in \mathcal{BR}^{[-\epsilon,\epsilon]})du \quad \mbox{is well-defined almost surely}.$$
The following lemma is quite straightforward, the proof of which is omitted.
\begin{lemma}
\label{stob}
Assume that two random sequences $(Y_{\epsilon})_{\epsilon>0}$ and $(Y'_{\epsilon})_{\epsilon>0}$ with a measurable state space $(G,\mathcal{G})$ have the same distribution. If $f: G \rightarrow \mathbb{R}$ is a measurable function satisfying that $f(Y_{\epsilon})$ converges almost surely as $\epsilon \rightarrow 0$, then $f(Y'_{\epsilon})$ converges almost surely as $\epsilon \rightarrow 0$
\end{lemma}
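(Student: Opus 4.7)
The plan is a direct measurability-plus-distributional-equality argument, with the only real work being the measurability step. Fix once and for all a countable dense sequence $\epsilon_n \downarrow 0$ (for instance $\epsilon_n = 1/n$). For $y = (y_\epsilon)_{\epsilon>0} \in G^{(0,\infty)}$ define the event
\[
A_{\mathbb{Q}}(y) := \bigcap_{k=1}^{\infty}\bigcup_{N=1}^{\infty}\bigcap_{m,n \geq N}\bigl\{|f(y_{\epsilon_m})-f(y_{\epsilon_n})| < 1/k\bigr\}.
\]
By Cauchy's criterion, $A_{\mathbb{Q}}(y)$ is the event that the sequence $(f(y_{\epsilon_n}))_{n \geq 1}$ has a limit in $\mathbb{R}$. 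Since $f$ is measurable and $A_{\mathbb{Q}}$ is built from countably many set operations applied to the coordinate evaluations at $\epsilon_1,\epsilon_2,\dots$, the event $A_{\mathbb{Q}}$ lies in the product $\sigma$-field $\mathcal{G}^{\otimes \mathbb{N}}$ pulled back along the projection $(y_\epsilon)_{\epsilon>0} \mapsto (y_{\epsilon_n})_{n\geq 1}$.

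Next I use the hypothesis $(Y_\epsilon)_{\epsilon>0} \stackrel{(d)}{=} (Y'_\epsilon)_{\epsilon>0}$. This equality of law, at a minimum, implies agreement of all finite-dimensional marginals, hence
\[
(Y_{\epsilon_n})_{n\geq 1} \stackrel{(d)}{=} (Y'_{\epsilon_n})_{n\geq 1}
\]
as random elements of $(G^{\mathbb{N}},\mathcal{G}^{\otimes \mathbb{N}})$. Because $A_{\mathbb{Q}}$ is measurable for this $\sigma$-field, one concludes
\[
\mathbb{P}\bigl(A_{\mathbb{Q}}(Y)\bigr) = \mathbb{P}\bigl(A_{\mathbb{Q}}(Y')\bigr).
\]
By hypothesis the left-hand side equals $1$, so $f(Y'_{\epsilon_n})$ is almost surely Cauchy and hence almost surely convergent in $\mathbb{R}$ along the sequence $\epsilon_n \downarrow 0$.

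The only subtle point, and the reason I would not call the argument fully routine, is the passage from convergence along a countable sequence $\epsilon_n \downarrow 0$ to the continuous-parameter statement ``$f(Y'_\epsilon)$ converges a.s.\ as $\epsilon \to 0^+$.'' In general a random net indexed by a continuous parameter need not have a measurable convergence event. In the authors' setting this is harmless for either of two reasons, and I would simply invoke whichever is convenient: (i) the $Y_\epsilon$ are themselves built from a continuous averaging scheme over shrinking windows, so $\epsilon \mapsto Y_\epsilon$ is almost surely continuous and convergence along any dense sequence is equivalent to convergence as $\epsilon \to 0^+$; or (ii) one interprets the phrase ``converges a.s.\ as $\epsilon \to 0$'' from the outset as meaning convergence along a chosen countable sequence (the standard convention when constructing local times via approximation). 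In either case, since the choice of $(\epsilon_n)$ was arbitrary, the conclusion transfers from $Y$ to $Y'$ and the lemma is proved.
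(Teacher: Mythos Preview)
The paper omits the proof entirely, calling the lemma ``quite straightforward.'' Your argument is correct and in fact more careful than anything the authors supply: you spell out the Cauchy-criterion event along a countable sequence, observe its measurability in the countable product $\sigma$-field, and transfer probability one via equality of finite-dimensional distributions. You also correctly flag the only genuine subtlety---that convergence along an uncountable parameter family is not automatically a measurable event---and your resolution (i), namely that in the intended application $\epsilon \mapsto Y_\epsilon$ is a.s.\ continuous (indeed, $\epsilon \mapsto \frac{1}{\epsilon}\int 1(|S_u|\le \epsilon)\,du$ is continuous in $\epsilon$ for any fixed continuous path), is the right one here and makes the sequential and continuous-parameter statements equivalent. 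There is nothing to correct.
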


Observe that for each fixed $k \in \mathbb{Z}$, $\{u \in [k,k+1]; \widehat{X}_u \in \mathcal{BR}^{[-\epsilon,\epsilon]}\}$ has the same distribution as $\{u \in [0,1]; S_u \in [-\epsilon,\epsilon]\}+\{k\}$ where $(S_u; u \geq 0)$ is the Slepian process. By Lemma \ref{stob}, it suffices to prove that for each Borel measurable set $C' \subset [0,1]$,
$$\lim_{\epsilon \rightarrow 0} \frac{1}{\epsilon}\int_{C'} 1(S_u \in [-\epsilon,\epsilon])du \quad \mbox{is well-defined almost surely}.$$
And this is quite clear from the path decomposition of the Slepian process on $[0,1]$, Theorem \ref{main}. We refer readers to L\'{e}vy \cite{Levybook}, and  Revuz and Yor \cite[Chapter VI]{RY} for the existence of Brownian local times by approximation. Now for $n \in \mathbb{N}$,
$$\frac{1}{\epsilon} \int_{[-n,n] \cap C}1(\widehat{X}_s \in \mathcal{BR}^{[-\epsilon,\epsilon]})ds = \sum_{k=-n}^{n-1}\frac{1}{\epsilon} \int_{C_k}1(\widehat{X}_s \in \mathcal{BR}^{[-\epsilon,\epsilon]})ds$$
converges almost surely as $\epsilon \rightarrow 0$. 

The random measure $\Gamma$ defined by \eqref{localtime} can be interpreted as the local times of the moving-window process $\widehat{X}$ at the level $\mathcal{BR}^0$. Note that the pair $(\widehat{X}, \Gamma)$ is jointly stationary in the sense of \eqref{jsdef}. Next, we compute explicitly the $0$-marginal of the Palm measure of the local times $\Gamma$:   
\begin{proposition}
\label{Revuz}
 Let $\Pi_0: \mathcal{C}_0[0,1]^{\mathbb{R}} \ni w \rightarrow w_0 \in \mathcal{C}_0[0,1]$ be the $0-$marginal projection. Then the image by $\Pi_0$ of the {\em Palm probability measure} of $\Gamma$ as in \eqref{localtime} is 
$$\mathbb{P}_{\Gamma} \circ \Pi_0^{-1}=\mathbb{P}^{\bf W^0},$$
where $\mathbb{P}^{\bf W^0}$ is Wiener measure pinned to $0$ at time $1$, that is the distribution of standard Brownian bridge.
\end{proposition}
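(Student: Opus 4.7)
The plan is to unfold the definition of the Palm measure on cylinder functions of the $0$-coordinate projection, interchange the smoothing limit with the expectation, and identify what remains as Brownian bridge. The normalizing constant $\sqrt{\pi/2}$ built into \eqref{localtime} is precisely the factor forcing the Palm measure to have total mass one and to land on $\mathbb{P}^{{\bf W}^0}$.

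Concretely, the key steps are as follows. Apply \eqref{palm} with $f\circ\Pi_0$ in place of $f$, substitute the defining approximation \eqref{localtime} for $\Gamma$, and (pending the interchange of $\epsilon$-limit with integration) use stationarity of $\widehat{X}$ to strip the $dt$-integral:
$$
\mathbb{P}_{\Gamma}(f\circ\Pi_0)
=\sqrt{\pi/2}\lim_{\epsilon\to 0}\frac{1}{\epsilon}\,\mathbb{E}\bigl[f(\widehat{X}_0)\,1(|\widehat{B}_1|\leq\epsilon)\bigr].
$$
Then condition on the Gaussian endpoint $\widehat{B}_1\sim\mathcal{N}(0,1)$ with density $\phi$ to rewrite the right-hand side as
$$
\sqrt{\pi/2}\lim_{\epsilon\to 0}\frac{1}{\epsilon}\int_{-\epsilon}^{\epsilon}\phi(x)\,\mathbb{E}\bigl[f(\widehat{X}_0)\,\bigm|\,\widehat{B}_1=x\bigr]\,dx.
$$
Since $2\sqrt{\pi/2}\,\phi(0)=1$, and $x\mapsto\mathbb{E}[f(\widehat{X}_0)\mid\widehat{B}_1=x]$ is continuous at $x=0$ (by weak continuity of Brownian bridges in the endpoint), the right-hand side collapses to $\mathbb{E}[f(\widehat{X}_0)\mid\widehat{B}_1=0]=\int f\,d\mathbb{P}^{{\bf W}^0}$. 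Taking $f\equiv 1$ shows that $\mathbb{P}_{\Gamma}$ is already a probability measure, so $\mathbb{P}_{\Gamma}\circ\Pi_0^{-1}=\mathbb{P}^{{\bf W}^0}$, as claimed.

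The main obstacle is rigorously justifying the exchange of the limit $\epsilon\to 0$ with the expectation and $dt$-integral. I would handle this by a uniform integrability argument: existence of Brownian local time ensures that $\epsilon^{-1}\int_0^1 1(|\widetilde{B}_u|\leq\epsilon)\,du$ converges in $L^p$ for every $p<\infty$ to a multiple of the Brownian local time at $0$, hence is uniformly integrable; combined with the absolute continuity of Lemma \ref{abszero} (whose Radon-Nikodym derivative \eqref{absstilw} is bounded in $L^q$ for every $q<\infty$) and Fubini, this transfers uniform integrability to the corresponding Slepian-type occupation density and licenses the swap. Alternatively one may first establish the identity for the pre-limit Palm measures $\mathbb{P}_{\Gamma^\epsilon}$, which is a direct stationarity computation, and then transfer to $\mathbb{P}_\Gamma$ via vague convergence $\Gamma^\epsilon\Rightarrow\Gamma$; this is the classical Revuz route to the Palm measure of a local time.
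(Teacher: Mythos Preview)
Your proposal is correct and follows essentially the same route as the paper: substitute the approximation \eqref{localtime} into the Palm formula \eqref{palm}, use stationarity of $\widehat{X}$ to reduce the $dt$-integral to a single expectation at $t=0$, and then recognize the resulting limit $\lim_{\epsilon\to 0}\mathbb{E}^{\bf W}[f(w)\mid w_1\in[-\epsilon,\epsilon]]$ as $\mathbb{P}^{{\bf W}^0}f$ via weak convergence of conditioned Brownian motion to the bridge. The paper simply writes the limit outside the expectation without comment, whereas you correctly flag the interchange as the one nontrivial step and supply a uniform-integrability justification; in that respect your version is more complete than the paper's own proof.
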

\begin{proof} Take $f: \mathcal{C}_0[0,1] \rightarrow \mathbb{R}$ bounded continuous. By injecting \eqref{localtime} into \eqref{palm}, we obtain:
\begin{align}
\mathbb{P}_{\Gamma} \circ \Pi_0^{-1}f&= \lim_{\epsilon \rightarrow 0}\sqrt{\frac{\pi}{2}}\frac{1}{\epsilon} \int_0^1 \mathbb{E}[f(\widehat{X}_t) 1(\widehat{X}_t \in \mathcal{BR}^{[-\epsilon,\epsilon]})]dt \notag\\
                                                 &\label{sta}=\lim_{\epsilon \rightarrow 0}\sqrt{\frac{\pi}{2}}\frac{1}{\epsilon} \mathbb{E}[f(\widehat{X}_0) 1(\widehat{X}_0 \in \mathcal{BR}^{[-\epsilon,\epsilon]})] \\
                                                 &=\lim_{\epsilon \rightarrow 0}\sqrt{\frac{\pi}{2}}\frac{1}{\epsilon} \mathbb{E}^{\bf W}[f(w) 1(w_1 \in [-\epsilon,\epsilon])] \notag\\
                                                 &= \lim_{\epsilon \rightarrow 0} \mathbb{E}^{\bf W}[f(w)|w_1 \in [-\epsilon,\epsilon]] \notag\\
                                                 &\label{approx}= \mathbb{P}^{\bf W^0}f,
\end{align}
where $\mathbb{P}^{\bf W}$ is Wiener measure on $\mathcal{C}_0[0,1]$. The equality \eqref{sta} is due to stationarity of the moving-window process $\widehat{X}$, and the equality \eqref{approx} follows from the weak convergence to Brownian bridge of Brownian motion, see e.g. Billingsley \cite[Section $11$]{Bill}.  
\end{proof}
\begin{remark}
The measure $P_{\Gamma} \circ \Pi_0^{-1}$ defined in Proposition \ref{Revuz} is closely related to the notion of {\em Revuz measure} of Markov additive functionals. Note that for $s\in \mathbb{R}$ and $t \geq 0$, $\Gamma[s,s+t]=\Gamma[0,t] \circ \theta_s$, i.e. $\Gamma$ induces a continuous additive functional of the moving-window process $\widehat{X}$.\footnote{This generalizes the definition of continuous additive functionals of one-sided Markov processes, see e.g. the book of Sharpe \cite[Chapter IV]{Sharpebook} and the survey paper of Getoor \cite{Getoors} for background.} Since $(\widehat{X}_t;t \in \mathbb{R})$ is stationary with respect to $\mathbb{P}^{\bf W}$,
$$\mathbb{P}_{\Gamma} \circ \Pi_0^{-1}f:=\mathbb{E}\int_0^1f(\widehat{X}_t)\Gamma(dt)\quad \mbox{for}~f:\mathcal{C}[0,1] \rightarrow \mathbb{R}~\mbox{bounded measurable},$$
can be viewed as Revuz measure of $\Gamma$ in the two-sided setting. For further discussions on Revuz measure of additive functionals, we refer readers to Revuz \cite{Revuz}, Fukushima \cite{Fukurevuz}, and Fitzsimmons and Getoor \cite{FGrevuz} among others.
\end{remark}
\subsection{Brownian bridge in two-sided Brownian motion}
\label{32}
In this paragraph, we show that there exists a random time $\widehat{T} \in \mathbb{R}$ such that $(\widehat{B}_{\widehat{T}+u}-\widehat{B}_{\widehat{T}}; 0 \leq u \leq 1)$ has the same distribution as Brownian bridge $b^0$. In terms of the moving-window process $\widehat{X}$, we show that
\begin{proposition}
\label{tsc}
There exists a random time $\widehat{T} \in \mathbb{R}$ such that $\widehat{X}_{\widehat{T}}$ has the same distribution as $(b^0_u; 0 \leq u \leq 1)$
\end{proposition}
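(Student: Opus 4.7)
The plan is to combine Proposition \ref{Revuz} with an abstract shift-coupling result of Last and Thorisson \cite{LT2014} which realizes the Palm probability measure of a jointly stationary random measure as the law of the underlying process under a random time-shift. The Palm measure machinery set up in Subsection \ref{31} was designed precisely so that the $0$-marginal of the Palm probability measure of $\Gamma$ is the law of standard Brownian bridge; once we can shift to a Palm-distributed time, the proposition follows by projecting to the $0$-coordinate.

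Concretely, I would proceed in three steps. First, I verify that the pair $(\widehat{X}, \Gamma)$ is jointly stationary in the sense of \eqref{jsdef}: stationarity of two-sided Brownian motion gives stationarity of $\widehat{X}$, and $\Gamma$ is manufactured from $\widehat{X}$ in a translation-equivariant way via the approximation \eqref{localtime}, so $\theta_s(\widehat{X}, \Gamma) \stackrel{(d)}{=} (\widehat{X}, \Gamma)$ for every $s \in \mathbb{R}$. Second, I check that $\mathbb{E}\,\Gamma[0,1) < \infty$, so that the Palm measure $\mathbb{P}_{\Gamma}$ is in fact a finite (indeed, after normalization, probability) measure; this follows from the computation in the proof of Proposition \ref{Revuz} specialized to $f \equiv 1$, where the limit evaluates to the density of $w_1$ under Wiener measure at $0$ times $\sqrt{\pi/2}$, a finite constant. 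Third, I invoke the Last--Thorisson construction, which asserts that whenever $(Z,\xi)$ is jointly stationary with $\mathbb{E}\xi[0,1)\in(0,\infty)$ and $\xi$ diffuse, there exists on a possibly enriched probability space a random time $\widehat{T}\in\mathbb{R}$ such that $\theta_{\widehat{T}} Z$ has the Palm probability distribution $\mathbb{P}_{\xi}/\mathbb{P}_{\xi}1$. Diffuseness of $\Gamma$ is clear since the local time on $\mathcal{BR}^0$ is built by the same sort of Brownian occupation-measure approximation that yields a continuous (hence atomless) measure, as guaranteed by the path decomposition of the Slepian process on $[0,1]$ from Theorem \ref{main}.

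Applying this to $(Z,\xi) := (\widehat{X}, \Gamma)$ produces a random time $\widehat{T} \in \mathbb{R}$ for which $\theta_{\widehat{T}}\widehat{X}$ is distributed according to the Palm probability measure of $\Gamma$. Applying the $0$-coordinate projection $\Pi_0$ and using Proposition \ref{Revuz}, we conclude that $\widehat{X}_{\widehat{T}} = \Pi_0(\theta_{\widehat{T}}\widehat{X})$ has the same distribution as $(b^0_u; 0 \le u \le 1)$, which is exactly the assertion of Proposition \ref{tsc}.

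The main obstacle is the appeal to the Last--Thorisson theorem in this infinite-dimensional path-valued setting: one must check that their framework accommodates state space $\mathcal{C}_0[0,1]^{\mathbb{R}}$ with the Polish metric \eqref{metric}, and that the measurability and $\sigma$-finiteness hypotheses are in force for $\Gamma$. Once this abstract theorem is granted, the proof is essentially a bookkeeping exercise, the substantive probabilistic content being already contained in Proposition \ref{Revuz} and in the existence of the local times $\Gamma$, both of which rely on the Slepian-process absolute continuity of Lemma \ref{abszero}.
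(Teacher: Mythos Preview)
Your overall plan matches the paper's: use the Last--Thorisson machinery to pass from the stationary pair $(\widehat{X},\Gamma)$ to the Palm distribution, then project via Proposition~\ref{Revuz}. However, there is a real gap in the third step. The Last--Thorisson result quoted in the paper as Theorem~\ref{thmLT} does \emph{not} assert that some $\theta_{\widehat T}\widehat X$ has the Palm law under the original measure $\mathbb{P}$ (even after enriching the space). What it gives is a shift $T-S$ together with a \emph{reweighted} probability $\mathbb{P}^0=\dfrac{\theta_{-S}\Gamma[0,1)}{\#D\cdot\mathbb{E}\Gamma[0,1)}\,\mathbb{P}$ under which $\theta_{T-S}\widehat X$ is Palm-distributed. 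Since $\mathbb{P}^0\ll\mathbb{P}$, this yields only Corollary~\ref{absex}: the Palm probability is absolutely continuous with respect to the law of some random shift of $\widehat X$. To upgrade absolute continuity to an honest random time under $\mathbb{P}$, the paper supplies an additional ingredient you omit: ergodicity of $\widehat X$ under $(\theta_t)_{t\in\mathbb{R}}$, proved via Kolmogorov's zero--one law for the tail $\sigma$-field of $(\widehat X_n)_{n\in\mathbb{N}}$, and then Thorisson's shift-coupling criterion (Theorem~\ref{thts}), packaged as Theorem~\ref{generalerg}. Without this ergodicity step, ``enriching the probability space'' does not help, because under $\mathbb{P}^0$ the process $\widehat X$ no longer has its original law.

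Two smaller points: you replace hypothesis~(3) of Theorem~\ref{thmLT}, namely $\conv\supp\Gamma=\mathbb{R}$ a.s., by a diffuseness assumption that is not what is required; the paper notes that the hypotheses of Theorem~\ref{thmLT} are ``straightforward'' here, but one should at least observe that the Slepian zero set is a.s.\ unbounded in both directions. And the finiteness $\mathbb{E}\Gamma[0,1)<\infty$ is indeed read off from the computation in Proposition~\ref{Revuz} with $f\equiv 1$, giving the value $1$.
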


As mentioned in the introduction, our proof relies on a recent result of Last and Thorisson \cite{LT2014}, which establishes a dual relation between stationary random measures and mass-stationary ones in the Euclidian space. We refer readers to Last and Thorisson \cite{LT2009,LT2011} for the notion of mass-stationarity, which is an analog to point-stationarity of random point processes. 

Before proceeding further, we need the following notations. Recall that $(Z_t; t \in \mathbb{R})$ is a path-measurable process with a state space $(E,\mathcal{E})$ such that $(E^{\mathbb{R}}, \mathcal{E}^{\mathbb{R}})$ is time-invariant, and $\xi$ is a random $\sigma-$finite measure on $\mathbb{R}$. Let $N_{\xi}$ be a simple point process on $\mathbb{Z}$ such that 
$$i \in N_{\xi} \Longleftrightarrow \xi(i+[0,1))>0.$$
Next we associate each $j \in \mathbb{Z}$ to the point of $N_{\xi}$ that is closest to $j$, choosing the smaller one when there are two such points. Then we obtain a countable number of sets, each of which contains exactly one point of $N_{\xi}$. Let $D$ be the set that contains $0$, and $S$ be the vector from $N_{\xi}-$point in the set $D$ to $0$. 
\begin{figure}[h]
\begin{center}
\includegraphics[width=0.8 \textwidth]{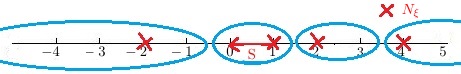}
\end{center}
\caption{Decomposition of $\mathbb{Z}$ induced by the simple point process $N_{\xi}$.}
\end{figure}

The next result is read from Last and Thorisson \cite[Theorem $2$]{LT2014}:
\begin{theorem} \cite{LT2014}
\label{thmLT}
Assume that $(1).$ the pair $(Z,\xi)$ is stationary, $(2).$ $\mathbb{E}\xi[0,1)<\infty$ and $(3).$ $\conv \supp \xi = \mathbb{R}$ a.s. where $\conv \supp \xi$ is the convex hull of the support of $\xi$.  Define
$$Z^0:=\theta_{T-S}Z,$$
where the conditional distribution of $T \in [0,1)$ given $(Z,\xi)$ is $$\theta_{-S}\xi(\cdot|[0,1)):=\frac{\theta_{-S}\xi(\cdot \cap [0,1))}{\theta_{-S}\xi([0,1))}.$$ Define
$$d\mathbb{P}^0:=\frac{\theta_{-S}\xi[0,1)}{\#D \cdot \mathbb{E}\xi[0,1)}d\mathbb{P},$$
where $\#D$ is the cardinality of the set $D$. Then $Z^0$ under $\mathbb{P}^0$ is the {\em Palm probability measure} of $\xi$.
\end{theorem}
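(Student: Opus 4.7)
The plan is to verify the Palm inversion identity
\[
\mathbb{E}^0[f(Z^0)] = \frac{1}{\mathbb{E}\xi[0,1)}\, \mathbb{E}\int_0^1 f(\theta_t Z)\,\xi(dt) \qquad (\ast)
\]
for every bounded measurable $f:E^{\mathbb{R}}\to\mathbb{R}$; this identity characterises the $E^{\mathbb{R}}$-marginal of the Palm probability measure defined in \eqref{palm}, so establishing $(\ast)$ proves the theorem. The first step is pure bookkeeping: substitute the density $d\mathbb{P}^0/d\mathbb{P}$ into $\mathbb{E}^0[f(Z^0)]$, integrate $T$ against its conditional law $\theta_{-S}\xi(\cdot|[0,1))$ (which cancels the factor $\theta_{-S}\xi[0,1)$ in the density), and apply the substitution $s=t-S$, using $\theta_{-S}\xi(A)=\xi(A-S)$, to obtain
\[
\mathbb{E}^0[f(Z^0)] = \frac{1}{\mathbb{E}\xi[0,1)}\, \mathbb{E}\!\left[\frac{1}{\#D}\int_{[-S,\,1-S)} f(\theta_s Z)\,\xi(ds)\right].
\]
The whole theorem therefore reduces to the mass-transport identity
\[
\mathbb{E}\!\left[\frac{1}{\#D}\int_{[-S,\,1-S)} f(\theta_s Z)\,\xi(ds)\right] = \mathbb{E}\!\int_{[0,1)} f(\theta_t Z)\,\xi(dt). \qquad (\ast\ast)
\]

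To prove $(\ast\ast)$ I would invoke the $\mathbb{Z}$-mass-transport principle for the jointly $\mathbb{Z}$-stationary triple $(Z,\xi,N_\xi)$. Abbreviate $g(k,\omega):=\int_{[k,k+1)} f(\theta_s Z)\,\xi(ds)$; by definition of $N_\xi$, $g(k,\omega)=0$ unless $k\in N_\xi$. Write $p_0:=-S$ for the $N_\xi$-point of the cell $D_0:=D$ containing $0$, so the left side of $(\ast\ast)$ equals $\mathbb{E}[g(p_0)/\#D_0]$. Now define a transport
\[
T(j,k,\omega):=\frac{g(k,\omega)}{\#D_0(\omega)}\,\mathbf{1}\bigl(j\in D_0(\omega),\,k=p_0(\omega)\bigr),
\]
so that each integer $j$ sends mass $g(p_0)/\#D_0$ to the $N_\xi$-point of its own Voronoi cell. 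The Voronoi partition is equivariant under integer shifts, thanks to the deterministic tie-breaking convention, and the hypothesis $\conv \supp \xi=\mathbb{R}$ a.s.\ guarantees $\#D<\infty$ a.s., so $T$ is well-defined and (using $\mathbb{E}\xi[0,1)<\infty$) integrable. The mass-transport identity then yields
\[
\mathbb{E}\sum_k T(0,k,\cdot) = \mathbb{E}\sum_j T(j,0,\cdot).
\]
The left-hand side is exactly $\mathbb{E}[g(p_0)/\#D_0]$. On the right, $T(j,0,\omega)\neq 0$ forces $0=p_0(\omega)$ (equivalently $0\in N_\xi$ and $0\in D_0$), so $\sum_j T(j,0,\omega)=\#D_0\cdot g(0,\omega)/\#D_0 = g(0,\omega)$, which gives $\mathbb{E}\sum_j T(j,0,\cdot)=\mathbb{E}[g(0,\cdot)]=\mathbb{E}\int_{[0,1)} f(\theta_s Z)\,\xi(ds)$. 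This proves $(\ast\ast)$, and choosing $f\equiv 1$ in $(\ast)$ simultaneously confirms that $\mathbb{P}^0$ is a probability measure.

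The main obstacle is the mass-transport step. One has to install the exchange identity $\mathbb{E}\sum_k T(0,k,\cdot)=\mathbb{E}\sum_j T(j,0,\cdot)$ in the joint-stationarity framework, which requires checking measurability and integer-shift equivariance of $D_0$, $\#D_0$, and $p_0$ (routine once the deterministic tie-breaking rule is fixed), plus integrability, which is supplied by $\mathbb{E}\xi[0,1)<\infty$. The two hypotheses of the theorem are precisely tailored for this: $\mathbb{E}\xi[0,1)<\infty$ controls the total mass per unit volume, while $\conv \supp \xi=\mathbb{R}$ ensures that every integer lies between two consecutive $N_\xi$-points, so the cells are a.s.\ finite and the factor $1/\#D$ makes sense. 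Once these verifications are in place, the remainder of the argument is the mechanical unfolding above.
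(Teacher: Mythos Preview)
The paper does not prove Theorem~\ref{thmLT} at all: it is quoted verbatim from Last and Thorisson \cite{LT2014} and used as a black box to derive Corollary~\ref{absex}. So there is no ``paper's own proof'' to compare against.

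Your argument is essentially the Last--Thorisson proof, and the overall strategy (reduce to the mass-transport identity $(\ast\ast)$, then realise it via the Voronoi allocation on $\mathbb{Z}$) is correct. One notational slip is worth flagging: the transport kernel you display,
\[
T(j,k,\omega)=\frac{g(k,\omega)}{\#D_0(\omega)}\,\mathbf{1}\bigl(j\in D_0(\omega),\,k=p_0(\omega)\bigr),
\]
is \emph{not} diagonally invariant under integer shifts, because it is tied to the cell $D_0$ of the origin rather than to the cell $D_j$ of the source site $j$. Your prose (``each integer $j$ sends mass \dots\ to the $N_\xi$-point of its own Voronoi cell'') describes the right object, namely
\[
T(j,k,\omega)=\frac{g(k,\omega)}{\#D_j(\omega)}\,\mathbf{1}\bigl(k=p_j(\omega)\bigr),
\]
and it is this kernel for which $T(j+m,k+m,\omega)=T(j,k,\theta_m\omega)$ holds and the exchange identity applies. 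With that correction the computations of $\sum_k T(0,k)$ and $\sum_j T(j,0)$ go through exactly as you wrote, and the rest of the argument is sound.
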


Thorisson \cite{Th1995,Th1999} presented a duality between stationary point processes and point-stationary ones in the Euclidian space. In particular, the stationary point process and its (modified) Palm version are the same with some random time-shift. Thus Theorem \ref{thmLT} is regarded as a generalization of those results in the random diffuse measure setting.

Now we apply Theorem \ref{thmLT} to $Z:=\widehat{X}$ the moving-window process as in \eqref{tsmw}, and $\xi:=\Gamma$ the local times as in \eqref{localtime}. It is straightforward that all assumptions in Theorem \ref{thmLT} are satisfied. This leads to:
\begin{corollary}
\label{absex}
There exists a random time $T \in \mathbb{R}$ such that the {\em Palm probability measure} of $\Gamma$, i.e. $\mathbb{P}_{\Gamma}/\mathbb{P}_{\Gamma}1$ is absolutely continuous with respect to the distribution of $\theta_{T}\widehat{X}$.
\end{corollary}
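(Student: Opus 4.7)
The plan is to apply Theorem \ref{thmLT} with $Z := \widehat{X}$ and $\xi := \Gamma$; the corollary then follows from the absolute continuity of the reweighted probability measure $\mathbb{P}^0$ with respect to $\mathbb{P}$.

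First I would verify the three hypotheses of Theorem \ref{thmLT}. Joint stationarity of $(\widehat{X}, \Gamma)$ is inherited from stationarity of the two-sided Brownian motion $\widehat{B}$, combined with the shift-equivariance of the defining limit \eqref{localtime}. Finiteness of the intensity is immediate: by stationarity and \eqref{localtime},
\begin{equation*}
\mathbb{E}\Gamma[0,1) \;=\; \lim_{\epsilon \to 0} \sqrt{\tfrac{\pi}{2}}\,\tfrac{1}{\epsilon}\, \mathbb{P}\bigl(\widehat{B}_1 \in [-\epsilon, \epsilon]\bigr) \;=\; 1,
\end{equation*}
using $\widehat{B}_1 \sim \mathcal{N}(0,1)$; the standard Brownian local-time formalism (L\'evy's theorem, or the calculation underpinning Proposition \ref{Revuz}) justifies exchanging expectation with the defining limit. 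Finally, for $\conv\supp\Gamma = \mathbb{R}$ a.s., I would invoke Birkhoff's ergodic theorem applied to the shift on two-sided Brownian motion, which is ergodic: since $\Gamma$ is stationary with finite positive intensity, $\Gamma[0,T]/T \to 1$ a.s., so $\Gamma[0,\infty) = \Gamma(-\infty, 0] = \infty$ a.s., forcing $\supp\Gamma$ to be a.s. unbounded in both directions.

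With the hypotheses in hand, Theorem \ref{thmLT} produces a random variable $T_0 \in [0,1)$ and a random integer $S$, both measurable with respect to $(\widehat{X}, \Gamma)$, together with a probability measure
\begin{equation*}
d\mathbb{P}^0 \;=\; \frac{\theta_{-S}\Gamma[0,1)}{\#D\cdot \mathbb{E}\Gamma[0,1)}\, d\mathbb{P},
\end{equation*}
such that the law of $\theta_{T_0 - S}\widehat{X}$ under $\mathbb{P}^0$ is exactly the Palm probability measure $\mathbb{P}_\Gamma/\mathbb{P}_\Gamma 1$. Setting $T := T_0 - S \in \mathbb{R}$, we have $\mathbb{P}^0 \ll \mathbb{P}$ by construction; hence for every measurable $A \subset \mathcal{C}_0[0,1]^{\mathbb{R}}$, the implication
\begin{equation*}
\mathbb{P}(\theta_T \widehat{X} \in A) = 0 \;\Longrightarrow\; \mathbb{P}^0(\theta_T \widehat{X} \in A) = 0,\ \text{i.e.},\ \tfrac{\mathbb{P}_\Gamma}{\mathbb{P}_\Gamma 1}(A) = 0,
\end{equation*}
gives the desired absolute continuity.

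The one step that requires genuine input is the verification of $\conv\supp\Gamma = \mathbb{R}$ a.s.; everything else is bookkeeping once the Last--Thorisson duality is invoked. I expect that the proof will give no constructive handle on $T$, which is consistent with the remark in the introduction that the argument ``gives little clue on the construction of the random time $T$.''
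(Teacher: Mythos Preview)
Your proposal is correct and follows exactly the route the paper takes: the corollary is presented immediately after Theorem~\ref{thmLT} with the remark that one applies it to $Z:=\widehat{X}$ and $\xi:=\Gamma$, the hypotheses being ``straightforward.'' You have simply supplied the details the paper omits---joint stationarity (already noted before Proposition~\ref{Revuz}), $\mathbb{E}\Gamma[0,1)=1$ (the computation behind Proposition~\ref{Revuz}), and unboundedness of $\supp\Gamma$ via ergodicity---and then read off the absolute continuity from $\mathbb{P}^0\ll\mathbb{P}$.
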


By Proposition \ref{Revuz}, the $0$-marginal of the Palm probability measure of $\Gamma$ is Brownian bridge. If we can show that the Palm probability measure $\mathbb{P}_{\Gamma}/\mathbb{P}_{\Gamma}1$ is achieved by $\theta_{\widehat{T}}\widehat{X}$ for a random time $\widehat{T} \in \mathbb{R}$, then Proposition \ref{tsc} follows as a consequence. To this end, we state a general result, the proof of which is deferred.
\begin{theorem}
\label{generalerg}
Let $(Z_t; t \in \mathbb{R})$ be a path-measurable process with a state space $(E,\mathcal{E})$. Assume that  
\begin{enumerate}
\item
$Z$ is ergodic under the time-shift group $(\theta_t; t \in \mathbb{R})$, that is
$$\mathbb{P}(Z \in H)=0~\mbox{or}~1 \quad \mbox{for all}~H \in \mathcal{I}:=\{H' \in \mathcal{E}^{\mathbb{R}}; \theta_t H'=H'~\mbox{for all}~t \in \mathbb{R}\};$$
\item
$\mu$ is a probability measure on $(E^{\mathbb{R}},\mathcal{E}^{\mathbb{R}})$ absolutely continuous with respect to the distribution of $\theta_T Z$ for a random time $T \in \mathbb{R}$.
\end{enumerate}
 Then there exists a random time $\widehat{T} \in \mathbb{R}$ such that $\theta_{\widehat{T}}Z$ is distributed as $\mu$. 
\end{theorem}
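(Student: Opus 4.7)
The plan is to deduce Theorem \ref{generalerg} directly from Thorisson's shift-coupling theorem (Theorem \ref{thts}), which asserts that two probability distributions on $(E^{\mathbb{R}}, \mathcal{E}^{\mathbb{R}})$ can be shift-coupled -- i.e.\ one can be obtained from the other by a randomized time-shift -- if and only if they agree on the shift-invariant $\sigma$-field $\mathcal{I}$. Thus the whole task reduces to checking that the target distribution $\mu$ and the distribution $\mathrm{Law}(Z)$ of $Z$ coincide on $\mathcal{I}$.

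First I would observe that time-shifts act trivially on invariant events: for any $H \in \mathcal{I}$ and any (possibly random) $T \in \mathbb{R}$, the identity $\theta_t^{-1} H = H$ for all $t$ yields $\{\theta_T Z \in H\} = \{Z \in H\}$. In particular, $\mathrm{Law}(\theta_T Z)\big|_{\mathcal{I}} = \mathrm{Law}(Z)\big|_{\mathcal{I}}$, and by the ergodicity assumption the latter is a $\{0,1\}$-valued measure on $\mathcal{I}$.

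Next I would exploit the absolute continuity hypothesis $\mu \ll \mathrm{Law}(\theta_T Z)$. For $H \in \mathcal{I}$, if $\mathrm{Law}(Z)(H) = 0$ then $\mathrm{Law}(\theta_T Z)(H) = 0$ and hence $\mu(H) = 0$; if $\mathrm{Law}(Z)(H) = 1$, the same reasoning applied to $H^c \in \mathcal{I}$ gives $\mu(H^c) = 0$, so $\mu(H) = 1$. Consequently $\mu\big|_{\mathcal{I}} = \mathrm{Law}(Z)\big|_{\mathcal{I}}$, and Theorem \ref{thts} then produces a random time $\widehat{T} \in \mathbb{R}$ with $\theta_{\widehat{T}} Z$ distributed as $\mu$, as desired.

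The main obstacle is not visible at the level of this plan: it is packaged entirely inside Theorem \ref{thts}, whose nontrivial direction -- from invariant-$\sigma$-field agreement to an explicit shift-coupling -- is the deep content of Thorisson \cite{Th2}. The role of ergodicity here is precisely to collapse the rather weak hypothesis ``$\mu \ll \mathrm{Law}(\theta_T Z)$ for some random $T$'' into the abstract criterion needed to invoke that theorem; without ergodicity, one would have to verify agreement on $\mathcal{I}$ by a more delicate argument tailored to the specific $\mu$ in question.
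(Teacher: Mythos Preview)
Your proposal is correct and follows essentially the same approach as the paper: both invoke Theorem~\ref{thts} and verify that $\mu$ and $\mathrm{Law}(Z)$ agree on $\mathcal{I}$ by using the invariance $\{\theta_T Z \in H\} = \{Z \in H\}$ for $H \in \mathcal{I}$, the ergodicity assumption to reduce to the $\{0,1\}$-valued case, and the absolute continuity $\mu \ll \mathrm{Law}(\theta_T Z)$ together with the complement trick. The paper's proof is terser but the logic is identical.
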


\begin{proof}[Proof of Proposition \ref{tsc}] We apply Theorem \ref{generalerg} to $Z:=\widehat{X}$ the moving-window process and $\mu:=\mathbb{P}_{\Gamma}/\mathbb{P}_{\Gamma}1$ the Palm probability measure of $\Gamma$. Observe that the invariant $\sigma$-field $\mathcal{I} \subset \cap_{n \in \mathbb{N}} \theta_n^{-1}\mathcal{E}^{\mathbb{R}}$, the tail $\sigma$-field of $(\widehat{X}_n; n \in \mathbb{N})$ which are i.i.d. copies of Brownian motion on $[0,1]$. By Kolmogorov's zero-one law, $\mathcal{I}$ is trivial under the distribution of $\widehat{X}$ and the assumption $(1)$ is satisfied. The assumption $(2)$ follows from Corollary \ref{absex}. Combining Theorem \ref{generalerg} and Proposition \ref{Revuz}, we obtain the desired result.  
\end{proof}
\begin{remark}
\label{smixing}
 In ergodic theory, the process $Z$ is said to be $\theta$-{\em mixing} if 
$$\sup\{\mathbb{P}(Z \in A \cap B)-\mathbb{P}(Z \in A) \mathbb{P}(Z \in B); t \in \mathbb{R}, A \in \mathcal{F}_t, B \in \mathcal{F}^{t+s}\} \rightarrow 0 \quad \mbox{as}~s \rightarrow \infty,$$
where $\mathcal{F}_t:=\sigma(Z_u; u \leq t)$ and $\mathcal{F}^{t+s}:=\sigma(Z_u; u \geq t+s)$. See e.g. Bradley \cite{BR} for a survey on strong mixing conditions. It is quite straightforward that the moving-window process $\widehat{X}$ is $\theta$-mixing, since $\widehat{X}_{t+l}$ and $\widehat{X}_t$ are independent for all $t \geq 0$ and $l \geq 1$. Consequently, $\widehat{X}$ is ergodic under time-shift $(\theta_t; t \in \mathbb{R})$. In Section \ref{33}, this notion of $\theta$-mixing plays an important role in one-sided embedding out of two-sided processes.
\end{remark}
In the rest of this part, we aim to prove Theorem \ref{generalerg}. We need the following result of Thorisson \cite{Th2}, which provides a necessary and sufficient condition for two continuous-time processes being transformed from one to the other by a random time-shift.
\begin{theorem} \cite{Th2}
\label{thts}
Let $(Z_t; t \in \mathbb{R})$ and $(Z'_t; t \in \mathbb{R})$ be two path-measurable processes with a state space $(E,\mathcal{E})$. Then there exists a random time $\widehat{T} \in \mathbb{R}$ such that $\theta_{\widehat{T}}Z $ has the same distribution as $Z'$ if and only if the distributions of $Z$ and $Z'$ agree on the invariant $\sigma-$field $\mathcal{I}$.
\end{theorem}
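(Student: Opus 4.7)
The necessity direction is immediate: if $\theta_{\widehat{T}} Z \stackrel{d}{=} Z'$, then for any $H \in \mathcal{I}$ the $\theta$-invariance $\theta_s^{-1} H = H$ for every $s \in \mathbb{R}$ gives $\mathbb{P}(Z \in H) = \mathbb{P}(\theta_{\widehat{T}} Z \in H) = \mathbb{P}(Z' \in H)$, so the laws of $Z$ and $Z'$ agree on $\mathcal{I}$. The substance is in the sufficiency direction, which I would split into a reduction to the ergodic case and an orbit-coupling step.

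For the reduction, set $\mu = \mathrm{Law}(Z)$ and $\mu' = \mathrm{Law}(Z')$ on $(E^{\mathbb{R}}, \mathcal{E}^{\mathbb{R}})$. Working in a standard Borel setting (which one may assume by a measurable isomorphism), I would disintegrate both $\mu$ and $\mu'$ with respect to $\mathcal{I}$. The hypothesis $\mu|_{\mathcal{I}} = \mu'|_{\mathcal{I}}$ means the two disintegrations are driven by a common base measure on $\mathcal{I}$, so it is enough to construct the shift within each ergodic fibre and paste the constructions together measurably. Thus I may assume that $\mathcal{I}$ is trivial under both $\mu$ and $\mu'$, i.e., the shift flow is ergodic on each.

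In the ergodic case, the key claim is the existence of a joint coupling $\mathbb{Q}$ of $\mu$ and $\mu'$ supported on the orbit relation
\begin{equation*}
R \;=\; \bigl\{(x,y) \in E^{\mathbb{R}} \times E^{\mathbb{R}} : y = \theta_t x \text{ for some } t \in \mathbb{R}\bigr\}.
\end{equation*}
Once $\mathbb{Q}$ is available, $\widehat{T}$ is recovered by a measurable selection of the shift time along $R$, enlarging the probability space by an auxiliary uniform random variable if necessary to resolve non-uniqueness. To produce $\mathbb{Q}$, my plan is to discretize the shift group along $2^{-n}\mathbb{Z}$ and invoke the discrete-time shift-coupling theorem for each dyadic approximation; ergodicity of the continuous flow together with a Birkhoff-type frequency-matching argument (the time averages $\tfrac{1}{2T}\int_{-T}^T f(\theta_t Z)\,dt$ stabilize to $\mu f = \mu' f$ on the common ergodic component, guaranteeing abundant ``hits'' into any prescribed target event) ensures that these dyadic couplings are tight, and a diagonal extraction yields the continuous-time coupling $\mathbb{Q}$ in the limit. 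Alternatively, one can construct $\mathbb{Q}$ directly by building transport rules along orbits via the stationary/mass-stationary duality of Last and Thorisson that underlies Theorem \ref{thmLT}.

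The main obstacle is precisely the orbit-coupling step in the ergodic case: producing a jointly measurable selection of shift times when the $\theta$-orbits are merely Borel subsets of $E^{\mathbb{R}}$ and $E$ is not assumed to carry any topology. This is where the standard Borel hypothesis and the auxiliary randomization are essentially used; in particular, the resulting $\widehat{T}$ is in general a \emph{randomized} time rather than a deterministic functional of $Z$ alone, which is consistent with the way Theorem \ref{thts} is applied to the moving-window process $\widehat{X}$ in Subsection \ref{32}.
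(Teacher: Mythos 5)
The paper does not give a proof of Theorem \ref{thts}; it is cited from Thorisson \cite{Th2} and used as a black box, so there is no in-paper proof to compare against. Evaluating your proposal on its own terms: the necessity direction is correct and routine, and the reduction to the ergodic case via disintegration over $\mathcal{I}$ is a natural opening move.

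However, the sufficiency direction has a genuine gap at exactly the step you flag as the ``key claim'': you do not actually establish the orbit-coupling in the ergodic case. The dyadic discretization plus tightness scheme does not work as sketched. If $\widehat{T}_n$ is a shift achieving the coupling for the discretized flow at mesh $2^{-n}$, nothing forces the family $(\widehat{T}_n)_n$ to be tight in $\mathbb{R}$: for mutually singular ergodic laws agreeing on $\mathcal{I}$ (the only interesting case, since $\mathcal{I}$ is trivial on an ergodic fiber), the discrete shift times can escape to infinity as the mesh shrinks, and a diagonal extraction need not produce a finite limiting shift. One also cannot simply invoke the discrete-time shift-coupling theorem as a known ingredient without circularity, since it is essentially the same statement you are trying to prove. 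The alternative route you mention, via the stationary/mass-stationary duality underlying Theorem \ref{thmLT}, is likewise unavailable at this level of generality: that machinery presupposes a jointly stationary random measure, not an arbitrary target law $\mu'$ agreeing with $\mu$ on $\mathcal{I}$, and in the paper Theorem \ref{thmLT} is applied \emph{after} Theorem \ref{thts} (through Corollary \ref{absex}), not used to prove it. The missing ingredient is a direct, non-limiting construction of the randomized shift on each ergodic fiber; Thorisson's argument in \cite{Th2} supplies this via a disintegration/transfer construction rather than any approximation scheme, and your proposal does not fill that hole.
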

\begin{proof}[Proof of Theorem \ref{generalerg}] We apply Theorem \ref{thts} to $Z'$ distributed as $\mu$. Since $(\theta_t; t \in \mathbb{R})$ is ergodic under the distribution of $Z$, 
$$\mathbb{P}(Z\in H)=0~\mbox{or}~1 \quad \mbox{for all}~H \in \mathcal{I}.$$
If $\mathbb{P}(Z \in H)=0$ for $H \in \mathcal{I}$, then $\mathbb{P}(\theta_T Z \in H)=\mathbb{P}(Z \in \theta_{-T}H)=\mathbb{P}(Z\in H)=0$. By the absolute continuity between the distribution $\mu$ and that of $\theta_TZ$, we have $\mu(H)=0$. By applying the same argument to the complement of $H$, $\mathbb{P}(Z \in H)=1$ for $H \in \mathcal{I}$ implies $\mu(H)=1$. Thus, the probability distribution $\mu$ and that of $Z$ agree on the invariant $\sigma-$field $\mathcal{I}$. Theorem \ref{thts} permits to conclude. 
\end{proof}
\subsection{From two-sided embedding to one-sided embedding}
\label{33}
We explain here how to achieve a certain distribution on $\mathcal{C}_0[0,1]$ in Brownian motion by a random spacetime shift, once this has been done in two-sided Brownian motion. We aim to prove that:
\begin{proposition}
\label{tto}
Assume that $\mu$ is a probability measure on $(\mathcal{C}_0[0,1],\mathcal{B})$. If $\widehat{X}_{\widehat{T}}$ is distributed as $\mu$ for some random time $\widehat{T} \in \mathbb{R}$, then there exists a random time $T \geq 0$ such that $\widehat{X}_T$ is distributed as $\mu$.
\end{proposition}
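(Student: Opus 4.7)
The plan leans on the strong independence structure of $\widehat{X}$ recorded in Remark \ref{smixing}: since $\widehat{X}_t$ depends only on Brownian increments over $[t,t+1]$, the two tails $(\widehat{X}_s;s\leq t-1)$ and $(\widehat{X}_s;s\geq t)$ are independent for every $t\in\mathbb{R}$. In particular $\widehat{X}$ is ergodic under the one-sided shift, and its invariant $\sigma$-field is trivial.

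Writing $\widehat{T}=\phi(\widehat{X})$ for a measurable $\phi:\mathcal{C}_0[0,1]^{\mathbb{R}}\to\mathbb{R}$, stationarity of $\widehat{X}$ gives, for each integer $n\geq 0$,
\[
\widehat{T}_n := \phi(\theta_n\widehat{X}) \stackrel{d}{=} \widehat{T}, \qquad \widehat{X}_{n+\widehat{T}_n} = (\theta_n\widehat{X})_{\widehat{T}_n} \stackrel{d}{=} \widehat{X}_{\widehat{T}} \sim \mu.
\]
Setting $T_n := n+\widehat{T}_n$ yields a family of random times, each with $\widehat{X}_{T_n}\sim\mu$, satisfying $\mathbb{P}(T_n\geq 0) = \mathbb{P}(\widehat{T}\geq -n) \to 1$ as $n\to\infty$. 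The task is to convert this ``almost non-negative'' family into a single random time $T\geq 0$ with the same marginal law $\mu$.

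For this I would approximate $\phi$ by functionals $\phi^{(K)}$ depending only on $\widehat{X}|_{[-K,K]}$ (via conditional expectations against a countable generating family of test functions), arranging that $\widehat{T}^{(K)} := \phi^{(K)}(\widehat{X})$ satisfies $\widehat{X}_{\widehat{T}^{(K)}} \Rightarrow \mu$ weakly as $K\to\infty$. With shifts $n_k := k(2K+1)$ the candidate times $T^{(K)}_{n_k} := n_k + \phi^{(K)}(\theta_{n_k}\widehat{X})$ are based on pairwise disjoint Brownian-increment intervals and hence \emph{exactly} independent; a Borel--Cantelli argument then selects the first candidate with $T^{(K)}_{n_k}\geq 0$, producing $T^{(K)}\geq 0$ a.s. Passing to a subsequential limit as $K\to\infty$ (while $n_k\to\infty$ simultaneously, to wash out the conditioning bias of the first-hit selection) recovers a non-negative random time $T$ with $\widehat{X}_T\sim\mu$ exactly.

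The main technical obstacle is the conditioning bias introduced by the first-hit selection: naively conditioning on $T^{(K)}_{n_k}\geq 0$ distorts the law of $\widehat{X}_{T^{(K)}_{n_k}}$ by restricting to $\widehat{T}^{(K)}\geq -n_k$. Controlling this distortion in the double limit $K\to\infty$, $n_k\to\infty$ so as to recover the exact law $\mu$ is the delicate step, and is where the \emph{exact} independence across disjoint Brownian-increment windows---rather than mere asymptotic mixing---is essential.
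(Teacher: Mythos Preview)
Your proposal has the right opening move --- shifting the two-sided construction to push $\widehat{T}$ to the right --- but the argument stalls at the step you yourself flag as ``delicate''. Two concrete gaps:

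\textbf{(i) The limiting step does not yield an exact law.} For each $K$ your first-hit selection produces a time $T^{(K)}\geq 0$ with $\widehat{X}_{T^{(K)}}$ distributed as $\mu^{(K)}$ \emph{conditioned} on the selection event. Even granting $\mu^{(K)}\Rightarrow\mu$ weakly and that the selection bias vanishes asymptotically, you only obtain a sequence of non-negative random times whose $\widehat{X}$-marginals converge weakly to $\mu$. There is no mechanism here to extract a single random time $T\geq 0$ with $\widehat{X}_T\sim\mu$ \emph{exactly}: subsequential limits of the $T^{(K)}$ need not exist as random variables, and even if they did, continuity of $t\mapsto\widehat{X}_t$ does not convert weak convergence of marginals into an exact hit. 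This is not a technicality one can patch; it is the entire content of the proposition.

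\textbf{(ii) The finite-window approximation is unjustified.} You assert that $\phi^{(K)}$ can be chosen with $\widehat{X}_{\phi^{(K)}(\widehat{X})}\Rightarrow\mu$, but approximating $\phi$ in, say, $L^1$ or in probability gives no control on the law of $\widehat{X}$ evaluated at the approximate time. (A side issue: writing $\widehat{T}=\phi(\widehat{X})$ already assumes no external randomization, which is not given.)

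The paper's proof avoids limits altogether via a \emph{filling scheme}. One first shifts so that $\mathbb{P}(T_1\geq 0)\geq 1/2$; on $\{T_1\geq 0\}$ one has captured an exact sub-probability $\tfrac12\mu$. The deficit $\mathcal{L}(\widehat{X}_{T_1}\mid T_1<0)$ is absolutely continuous with respect to $\mu$ with density bounded by $2$, so von Neumann rejection sampling applies. The required approximately-i.i.d.\ $\mu$-samples at non-negative times are supplied by Corollary~\ref{impc} (whose proof uses Blackwell--Dubins merging and the $\theta$-mixing of $\widehat{X}$, not a finite-window truncation). Each pass recovers an exact half of the remaining deficit, and the geometric series terminates the construction with $\widehat{X}_T\sim\mu$ exactly. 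The key conceptual difference is that the paper works in total variation and assembles $\mu$ as a countable sum of exact pieces, rather than chasing a weak limit.
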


It is not hard to see that Theorem \ref{mainbis} follows readily from Corollary \ref{tsc} and Proposition \ref{tto}. In the sequel, we assume path-measurability for any continuous-time processes that are involved. Let $\mathcal{L}(\mathcal{X})$ be the distribution of any random element $\mathcal{X}$. To prove Proposition \ref{tto}, we begin with a general statement.
\begin{proposition}
\label{lemtto}
Let $(Z_t; t \in \mathbb{R})$ be a stationary process and $\theta$-mixing as in Remark \ref{smixing}. Assume that $Z_{\widehat{T}}$ is distributed as $\mu$ for some random time $\widehat{T} \in \mathbb{R}$. Given $\epsilon>0$ and $N \in \mathbb{N}$, there exist random times $0 \leq T_1< \cdots <T_N$ on some event $E_N$ of probability larger than $1-\epsilon$ such that 
$$||\mathcal{L}(Z_{T_1}, \cdots, Z_{T_N}|E_N)- \mu^{\otimes N}||_{TV} \leq \epsilon,$$
where $||\cdot||_{TV}$ is the total variation norm of a measure.
\end{proposition}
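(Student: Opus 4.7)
The plan is to use stationarity to produce $N$ shifted copies of $\widehat{T}$, and then exploit $\theta$-mixing to make the resulting samples approximately independent; the key preparation is to replace $\widehat{T}$ by a random time that depends on $Z$ only through its restriction to a compact interval.

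First I would truncate: since $\widehat{T}$ is a.s.\ finite, choose $K$ large enough that $\mathbb{P}(|\widehat{T}|>K)<\epsilon/(6N)$, and set $\widehat{T}_K := \widehat{T} \cdot 1(|\widehat{T}|\le K)$, so that $Z_{\widehat{T}_K}=Z_{\widehat{T}}$ on $\{|\widehat{T}|\le K\}$. Next I would localize: the sub-$\sigma$-fields $\mathcal{G}_M := \sigma(Z_t; t \in [-M,M])$ increase to $\sigma(Z)$, so starting from a finite discretization of the bounded variable $\widehat{T}_K$ and approximating each of its level sets by a $\mathcal{G}_M$-measurable set, I can build a $\mathcal{G}_M$-measurable $\widetilde{T}$ with values in $[-K,K]$ satisfying $\mathbb{P}(\widetilde{T}\neq \widehat{T}_K)<\epsilon/(6N)$. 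Combining the two estimates gives $||\mathcal{L}(Z_{\widetilde{T}})-\mu||_{TV} \le 2\epsilon/(3N)$, and $\widetilde{T}$ is now a fixed measurable functional of $Z|_{[-M,M]}$.

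Now I would spread out the copies. Setting $M' := M \vee K$, choose the spacing $L>2M'+1$ large enough that the $\theta$-mixing coefficient at lag $L-2M'$ is bounded by $\epsilon/(3N^2)$. Put $s_i := K+M'+(i-1)L$ and define
\[
T_i := s_i + \widetilde{T}(\theta_{s_i} Z), \qquad i=1,\ldots,N.
\]
By stationarity each $T_i$ lies in $[s_i-K, s_i+K]$, which ensures $0 \le T_1 < \cdots < T_N$ and also that the pair $(T_i, Z_{T_i})$ is a measurable functional of $(\theta_{s_i} Z)|_{[-M',M']} = Z|_{[s_i-M', s_i+M']}$; by the choice of $L$ these windows are pairwise disjoint.

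Finally I would assemble the bound. By stationarity, each $\mathcal{L}(Z_{T_i})$ equals $\mathcal{L}(Z_{\widetilde{T}})$, hence is within $2\epsilon/(3N)$ of $\mu$ in total variation; iterating the $\theta$-mixing estimate across the $N$ disjoint windows controls the joint law of $(Z_{T_1},\ldots,Z_{T_N})$ in total variation from its product of marginals up to $(N-1)\epsilon/(3N^2)$, and the triangle inequality then yields a total-variation approximation to $\mu^{\otimes N}$ of order $\epsilon$. Taking $E_N$ to be the intersection of the good events $\{\widetilde{T}(\theta_{s_i} Z) = \widehat{T}_K(\theta_{s_i} Z),\, |\widehat{T}(\theta_{s_i} Z)|\le K\}$ over $i=1,\ldots,N$ and applying a union bound gives $\mathbb{P}(E_N) > 1 - \epsilon/3$, after which a short conditional total-variation computation on $E_N$ delivers the required estimate. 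The main obstacle is the localization step: reducing $\widehat{T}$ to a functional of a compact window of $Z$ with only a small loss in the distribution of $Z_{\widetilde{T}}$, and propagating that loss controllably through the $N$ shifted copies so that the conditional joint law on $E_N$ still sits within $\epsilon$ of $\mu^{\otimes N}$.
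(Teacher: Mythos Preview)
Your approach is valid but genuinely different from the paper's. The paper proceeds by induction on $N$: it first shifts $\widehat{T}$ so that $T_1 \ge 0$ with high probability, and then, having built $T_1 < \cdots < T_N$ on an event $E_N$, it invokes the Blackwell--Dubins merging-of-opinions lemma to show that, far enough to the right, the conditional law of the shifted process given $E_N$ is close in total variation to the unconditional law, which lets it plant $T_{N+1}$ beyond $T_N$. Your route instead front-loads the work into a localization: you replace $\widehat{T}$ by a functional $\widetilde{T}$ of a compact window of $Z$, and then lay down $N$ disjoint shifted windows directly, so no induction and no Blackwell--Dubins are needed. Your construction even yields $0 \le T_1 < \cdots < T_N$ surely, so the event $E_N$ is largely cosmetic in your argument. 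The trade-off is that your proof hinges on the localization step (approximating $\widehat{T}$ by a functional of $Z|_{[-M,M]}$ plus independent auxiliary randomness), which is standard but needs a word if $\widehat{T}$ involves external randomization, whereas the paper avoids this by working with the original $\widehat{T}$ throughout.

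One shared caveat: the step where you pass from the $\theta$-mixing coefficient on rectangles to a total-variation bound between the joint law of the $Z_{T_i}$ and the product of marginals really uses $\beta$-mixing rather than the $\alpha$-type condition stated in Remark~\ref{smixing}. The paper's proof has the same implicit strengthening (in the handling of the first and third terms of \eqref{5122}); in the intended application to the moving-window process this is harmless, since there one has exact independence after lag $1$.
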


Before proceeding the proof, we need the following lemma known as Blackwell-Dubins' merging of opinions \cite{BD}. In that paper, they only proved the result for discrete chains. But the argument can be easily adapted to the continuous setting. We rewrite Blackwell-Dubins' theorem for our own purpose, and leave full details to careful readers.
\begin{lemma} \cite{BD} \label{BDmo}
Let $(Z_t; t \in \mathbb{R})$ and $(Z'_t; t \in \mathbb{R})$ be two path-measurable processes with a state space $(E,\mathcal{E})$. If the distribution of $Z'$ is absolutely continuous with respect to $Z$, then
$$||\mathcal{L}(Z'_{t+s};s \geq 0|\mathcal{F}'_t)-\mathcal{L}(Z_{t+s};s \geq 0|\mathcal{F}_t)||_{TV} \rightarrow 0 \quad \mbox{as}~t \rightarrow \infty,$$
where $\mathcal{F}_t:=\sigma(Z_u; u \leq t)$ and $\mathcal{F}'_t:=\sigma(Z'_u; u \leq t)$.
\end{lemma}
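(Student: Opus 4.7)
The plan is to follow the original argument of Blackwell and Dubins \cite{BD}, which reduces the merging statement to the $L^1$-convergence of a Radon--Nikodym derivative martingale. First, I would transfer everything to the canonical space: let $P := \mathcal{L}(Z)$ and $P' := \mathcal{L}(Z')$, regarded as probability measures on $(E^{\mathbb{R}}, \mathcal{E}^{\mathbb{R}})$, and let $\mathcal{G}_t$ denote the natural filtration of the coordinate process up to time $t$, with $\mathcal{G}_\infty := \sigma(\bigcup_t \mathcal{G}_t)$. Under this canonical identification, $\mathcal{F}_t$ and $\mathcal{F}'_t$ both become $\mathcal{G}_t$, and the hypothesis reads $P' \ll P$ on $\mathcal{G}_\infty$. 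The path-measurability assumption ensures that regular conditional distributions exist and can be chosen to be jointly measurable, so the statement of the lemma makes sense.

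Next, I would set $M_t := dP'/dP|_{\mathcal{G}_t}$. Then $(M_t, \mathcal{G}_t)$ is a nonnegative uniformly integrable $P$-martingale, so by Doob's theorem $M_t \to M_\infty := dP'/dP$ both $P$-almost surely and in $L^1(P)$; a standard application of the martingale property shows $M_t > 0$ for all $t$, $P'$-almost surely. For any bounded $\varphi$ measurable with respect to the future $\sigma$-field $\sigma(\pi_u : u \geq t)$, Bayes' formula gives
$$E_{P'}[\varphi \mid \mathcal{G}_t] - E_P[\varphi \mid \mathcal{G}_t] \; = \; \frac{E_P\bigl[\varphi\,(M_\infty - M_t) \mid \mathcal{G}_t\bigr]}{M_t}.$$
Specializing to $\varphi = 1_A$ and taking the supremum over future events $A$ yields the pointwise bound
$$\bigl\|\mathcal{L}(Z'_{t+s};s\geq 0 \mid \mathcal{G}_t) - \mathcal{L}(Z_{t+s};s\geq 0 \mid \mathcal{G}_t)\bigr\|_{TV} \; \leq \; \frac{1}{2}\, \frac{E_P\bigl[|M_\infty - M_t| \mid \mathcal{G}_t\bigr]}{M_t}.$$

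It remains to show that the right-hand side vanishes as $t \to \infty$. The $L^1(P')$-convergence is immediate: integrating against $dP' = M_\infty\,dP$ and using the tower property collapses the expression to $E_P[|M_\infty - M_t|] \to 0$. The main obstacle is upgrading this to $P'$-almost-sure convergence, which is the strong form used implicitly by the application in Subsection \ref{33}. For this I would invoke Hunt's lemma applied to the sequence $|M_\infty - M_t|$: since this sequence tends to $0$ both $P$-almost surely and in $L^1(P)$, Hunt's lemma gives $E_P[|M_\infty - M_t| \mid \mathcal{G}_t] \to 0$ $P$-almost surely, hence also $P'$-almost surely. Finally, dividing by $M_t$ is harmless $P'$-a.s.\ because $M_t \to M_\infty > 0$ along $P'$-almost every path, so the quotient tends to $0$, completing the argument. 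The continuous-time adaptation of the original discrete-time proof \cite{BD} consists only in this replacement of sequential martingale convergence by its continuous-time analogue; no new conceptual ingredient is required.
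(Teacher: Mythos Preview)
The paper does not actually prove this lemma: immediately before the statement it says that Blackwell and Dubins proved the discrete case, that ``the argument can be easily adapted to the continuous setting,'' and that ``full details [are left] to careful readers.'' Your proposal is precisely those omitted details, following the classical Blackwell--Dubins route via the density martingale and Bayes' formula, so the approach is exactly what the paper intends.

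There is one technical slip worth flagging. You invoke Hunt's lemma for $|M_\infty - M_t|$ on the grounds that this sequence tends to $0$ both $P$-a.s.\ and in $L^1(P)$. The standard form of Hunt's lemma requires domination by a \emph{fixed} integrable function, and convergence a.s.\ together with convergence in $L^1$ (equivalently, uniform integrability) is known not to suffice for the a.s.\ conclusion in general. The repair is immediate and is in fact how Blackwell and Dubins argue: since $E_P[M_\infty - M_t \mid \mathcal{G}_t] = 0$, one has
\[
E_P\bigl[\,|M_\infty - M_t|\,\bigm|\,\mathcal{G}_t\bigr] \;=\; 2\,E_P\bigl[(M_\infty - M_t)^{+}\bigm|\,\mathcal{G}_t\bigr],
\]
and $(M_\infty - M_t)^{+} \le M_\infty \in L^1(P)$ furnishes the required dominant. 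Hunt's lemma then gives $E_P[(M_\infty - M_t)^{+}\mid\mathcal{G}_t]\to 0$ $P$-a.s., hence $P'$-a.s., and the rest of your argument goes through unchanged.
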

\begin{proof}[Proof of Proposition \ref{lemtto}] We proceed by induction over $N \in \mathbb{N}$. By stationarity of $Z$, for each $s \in \mathbb{R}$, $\theta_sZ$ has the same distribution as $Z$. Let $\widehat{T}_{\theta_s}$ be the random time constructed from $\theta_sZ$ just as $\widehat{T}$ is constructed from $Z$. Therefore, $(\theta_sZ)_{\widehat{T}_{\theta_s}}=Z_{\widehat{T}_{\theta_s}+s}$ is distributed as $\mu$. Let $t \in \mathbb{R}$ be the $\frac{\epsilon}{2}$-quantile of $\widehat{T}$, that is $\mathbb{P}(\widehat{T} \geq t) \geq 1-\frac{\epsilon}{2}$. Define $T_1:=\widehat{T}_{\theta_{-t}}-t$. Observe that for $A \in \mathcal{E}$,
$$\mathbb{P}(Z_{T_1} \in A)-\frac{\epsilon}{2} \leq \mathbb{P}(Z_{T_1} \in A~\mbox{and}~T_1 \geq 0) \leq \mathbb{P}(Z_{T_1} \in A).$$
As a consequence, on the event $E_1:=\{T_1 \geq 0\}$ of probability larger than $1-\frac{\epsilon}{2}>1-\epsilon$,  
$$||\mathcal{L}(Z_{T_1}|E_1) -\mu||_{TV} \leq \frac{\epsilon/2}{1-\epsilon/2}<\epsilon.$$
Suppose that there exist $0 \leq T_1< \cdots <T_N$ on some event $E_N$ of probability larger than $1-\frac{\epsilon}{4}$ such that 
\begin{equation}
\label{fm1}
||\mathcal{L}(Z_{T_1}, \cdots, Z_{T_N}|E_N)-\mu^{\otimes N}||_{TV} \leq \frac{\epsilon}{4}.
\end{equation}
Note that the distribution of the conditioned moving-window process $(Z_s; s \in  \mathbb{R}|E_N)$ is absolutely continuous with respect to that of the original one $(Z_s; s \in \mathbb{R})$. By Lemma \ref{BDmo}, 
\begin{equation}
\label{59}
||\mathcal{L}(Z_{t+s}; s \geq 0|E_N, \mathcal{F}_t)-\mathcal{L}(Z_{t+s};s \geq 0|\mathcal{F}_t)||_{TV} \rightarrow 0 \quad \mbox{as}~t \rightarrow \infty.
\end{equation}
By triangle inequality, we have for $t',t'' \geq 0$,
\begin{align}
\label{5122}
&||\mathcal{L}(Z_{t'+t''+s};s \geq 0|E_N) - \mathcal{L}(Z_s; s \geq 0)||_{TV} \notag\\
&\quad \quad  \leq||\mathcal{L}(Z_{t'+t''+s};s \geq 0|E_N) - \mathcal{L}(Z_{t'+t''+s};s \geq 0|E_N, \mathcal{F}_{t'})||_{TV} \notag\\
&\quad \quad \quad  \quad \quad+ ||\mathcal{L}(Z_{t'+t''+s};s \geq 0|E_N, \mathcal{F}_{t'}) - \mathcal{L}(Z_{t'+t''+s};s \geq 0|\mathcal{F}_{t'})||_{TV} \notag\\
&\quad \quad \quad  \quad \quad \quad \quad \quad \quad \quad \quad \quad \quad+||\mathcal{L}(Z_{t'+t''+s};s \geq 0|\mathcal{F}_{t'}) - \mathcal{L}(Z_{s};s \geq 0)||_{TV}.
\end{align}
By $\theta$-mixing property, the first and the third term of \eqref{5122} goes to $0$ as $t'' \rightarrow \infty$. By \eqref{59}, the second term of \eqref{5122} goes to $0$ as $t' \rightarrow \infty$. Therefore,
\begin{equation}
\lim_{t \rightarrow \infty} ||\mathcal{L}(Z_{t+s};s \geq 0|E_N) - \mathcal{L}(Z_s; s \geq 0)||_{TV} =0 \quad \mbox{as}~t \rightarrow \infty.
\end{equation}
Pick $t_N \geq 0$ such that $\mathbb{P}(T_N \geq t_N) \leq \frac{\epsilon}{8}$ and
$$||\mathcal{L}(Z_{t_N+s}; s \geq 0|E_N)-\mathcal{L}(Z_s; s \geq 0)||_{TV} \leq \frac{\epsilon}{8}.$$
By a similar argument as in the case of $N=1$, there exists a random time $T_{N+1} \in \mathbb{R}$ such that $\mathbb{P}(T_{N+1} \geq t_N) \geq 1-\frac{\epsilon}{8}$ and 
\begin{equation}
\label{fm2}
||\mathcal{L}(Z_{T_{N+1}}|E_N \cap \{T_{N+1} \geq t_N\})-\mu||_{TV} \leq \frac{\epsilon}{8}+\frac{\epsilon}{8}=\frac{\epsilon}{4}.
\end{equation}
Let $E_{N+1}:=E_N \cap \{T_{N+1} >T_N\}$. Since $$\mathbb{P}(T_{N+1}>T_N) \geq \mathbb{P}(T_{N+1} \geq t_N)-\mathbb{P}(T_N>t_N) \geq 1-\frac{\epsilon}{4},$$ 
we get $\mathbb{P}(E_{N+1}) \geq 1-\frac{\epsilon}{2}>1-\epsilon$. By \eqref{fm1} and \eqref{fm2},
$$||\mathcal{L}(Z_{T_1}, \cdots, Z_{T_N}|E_{N+1})-\mu^{\otimes N}||_{TV} \leq \frac{\epsilon}{2} \quad \mbox{and} \quad ||\mathcal{L}(Z_{T_{N+1}}|E_{N+1})-\mu||_{TV} \leq \frac{\epsilon}{2}.$$
We obtain immediately that $||\mathcal{L}(Z_{T_1}, \cdots, Z_{T_{N+1}}|E_{N+1})-\mu^{\otimes N+1}||_{TV} \leq \frac{\epsilon}{2}+\frac{\epsilon}{2}=\epsilon$.  
\end{proof}

By applying Lemma \ref{BDmo} in the first step, we deduce from Proposition \ref{lemtto} that:
\begin{corollary}
\label{impc}
Let $(Z_t; t \in \mathbb{R})$ be a stationary process and $\theta$-mixing as in Remark \ref{smixing}. Assume that $Z_{\widehat{T}}$ is distributed as $\mu$ for some random time $\widehat{T} \in \mathbb{R}$. Given $\epsilon>0$, $N \in \mathbb{N}$ and $E_0$ an event of positive probability, there exist random times $0 \leq T_1 < \cdots <T_N$ on some event $E_N$ of probability larger than $1-\epsilon$ such that
$$||\mathcal{L}(Z_{T_1}, \cdots, Z_{T_N}|E_0, E_N)- \mu^{\otimes N}||_{TV} \leq \epsilon.$$
\end{corollary}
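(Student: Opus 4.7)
The plan is to upgrade Proposition \ref{lemtto} to the case of an extra conditioning event $E_0$ by first time-shifting the trajectory far enough that the dependence on $E_0$ washes out in total variation, via Blackwell--Dubins merging (Lemma \ref{BDmo}). Since $\mathbb{P}(E_0)>0$, the conditional law $\mathbb{P}(\cdot\mid E_0)$ is absolutely continuous with respect to $\mathbb{P}$, so applying Lemma \ref{BDmo} to $Z$ under these two measures (and integrating out the initial $\sigma$-field on which the conditional merging takes place) yields
\[
||\mathcal{L}(Z_{t+s};\,s\ge 0\mid E_0)-\mathcal{L}(Z_s;\,s\ge 0)||_{TV}\longrightarrow 0\quad\text{as }t\to\infty.
\]
Given $\epsilon>0$, I would fix a small $\delta\ll\epsilon$ (say $\delta=\epsilon/8$) and pick $t_0\ge 0$ so large that the left hand side is at most $\delta$.

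By stationarity, $\mathcal{L}(Z_s;s\ge 0)$ is simply the unconditional law of $Z$ on $[0,\infty)$, so Proposition \ref{lemtto} applies at tolerance $\epsilon/2$: it supplies measurable functionals $\Phi_1,\dots,\Phi_N$ and an indicator $\Psi$ of the trajectory such that, when evaluated on $(Z_s;s\ge 0)$, one obtains $0\le \Phi_1<\cdots<\Phi_N$, $\mathbb{P}(\Psi=1)\ge 1-\epsilon/2$ and $||\mathcal{L}(Z_{\Phi_1},\dots,Z_{\Phi_N}\mid\Psi=1)-\mu^{\otimes N}||_{TV}\le \epsilon/2$. I then set $T_i:=t_0+\Phi_i\bigl((Z_{t_0+s};s\ge 0)\bigr)$ and $E_N:=\{\Psi((Z_{t_0+s};s\ge 0))=1\}$, so automatically $0\le t_0\le T_1<\cdots<T_N$.

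It remains to transfer the two bounds from the unconditional path to the shifted path under $\mathbb{P}(\cdot\mid E_0)$. Applied to the measurable set defining $E_N$, the total variation bound $\le\delta$ from the first paragraph gives $\mathbb{P}(E_N\mid E_0)\ge 1-\epsilon/2-\delta>1-\epsilon$. For the conditional law of $(Z_{T_1},\dots,Z_{T_N})$ given $\{E_0,E_N\}$, I would invoke the elementary fact that if two probability measures are within $\delta$ in total variation and both put mass $\ge 1-\delta$ on an event $A$, then their restrictions to $A$ are within $O(\delta)$ in total variation. Combining this with the bound from Proposition \ref{lemtto} via the triangle inequality yields
\[
||\mathcal{L}(Z_{T_1},\dots,Z_{T_N}\mid E_0,E_N)-\mu^{\otimes N}||_{TV}\le \epsilon/2+O(\delta)\le\epsilon.
\]

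The only real obstacle is quantitative bookkeeping: choosing $\delta$ and the tolerance inside Proposition \ref{lemtto} so that the three sources of error (the approximation inside Proposition \ref{lemtto}, the Blackwell--Dubins merging of $\mathbb{P}(\cdot\mid E_0)$ with $\mathbb{P}$, and the mild distortion from restricting to the conditional event $E_N$) compose to at most $\epsilon$. Conceptually the step is routine once Lemma \ref{BDmo} and Proposition \ref{lemtto} are in hand.
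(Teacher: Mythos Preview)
Your approach is essentially the same as the paper's, which simply states ``By applying Lemma \ref{BDmo} in the first step, we deduce from Proposition \ref{lemtto}'' and gives no further detail; you have correctly fleshed out what this one-line hint means. One small point: your first displayed convergence
\[
||\mathcal{L}(Z_{t+s};\,s\ge 0\mid E_0)-\mathcal{L}(Z_s;\,s\ge 0)||_{TV}\longrightarrow 0
\]
does not follow from Blackwell--Dubins and ``integrating out'' alone, since the two conditional kernels $\mathcal{L}(Z_{t+s};s\ge 0\mid E_0,\mathcal{F}_t)$ and $\mathcal{L}(Z_{t+s};s\ge 0\mid \mathcal{F}_t)$ are averaged under different measures on $\mathcal{F}_t$; you need the $\theta$-mixing hypothesis as well, exactly as in the triangle-inequality decomposition \eqref{5122} inside the proof of Proposition \ref{lemtto}. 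Once that is acknowledged, the rest of your bookkeeping is fine.
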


Now let us recall some elements of {\em von Neumann's acceptance-rejection algorithm} \cite{vN}. Assume that $\mu$ and $\nu$ are two probability measures such that the Radon-Nikodym derivative $f:=\frac{d\nu}{d\mu}$ is essentially bounded under $\mu$. Let $(Z_n)_{n \in \mathbb{N}} \sim \mu^{\otimes \mathbb{N}}$ be a sequence of i.i.d. random variables distributed as $\mu$. Then
$$
Z_T \sim \nu \quad \mbox{with}~T:=\inf\left\{i \in \mathbb{N}; U_i  \leq \frac{f(Z_i)}{\ess \sup f}\right\},
$$
where $(U_n)_{n \in \mathbb{N}}$ is a sequence of i.i.d. uniform-$[0,1]$ random variables independent of $(Z_n)_{n \in \mathbb{N}}$. It is well-known that the total variation between the $N^{th}$ updated distribution and the target one is of geometric decay, i.e.
$$||\mathcal{L}(Z_{T \wedge N})-\nu||_{TV} \leq 2 \left(1-\frac{1}{\ess \sup f}\right)^N.$$
If the sample size $N$ is large enough, a good portion of the target distribution $\nu$ can be sampled from $(Z_1,\cdots, Z_N) \sim \mu^{\otimes N}$ \`{a} la von Neumann. The following lemma is a slight extension of the above result to the quasi-i.i.d. case. The proof is quite standard, and thus is omitted.
\begin{lemma}
\label{quasiiid}
Assume that $||\mathcal{L}(Z_1 \cdots Z_N)-\mu^{\otimes N}||_{TV} \leq \epsilon$ for some $\epsilon >0$ and $N \in \mathbb{N}$.Then
$$||\mathcal{L}(Z_{T_N})-\nu||_{TV} \leq \epsilon +2 \left(1-\frac{1}{\ess \sup f}\right)^N,$$
where $T_N:=\inf\{i \leq N; U_i   \leq f(Z_i)/\ess \sup f\} \wedge N$.
\end{lemma}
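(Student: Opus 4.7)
The plan is to combine two standard tools: the coupling characterization of total variation distance, and the classical von Neumann rejection-sampling bound. The idea is to reduce the quasi-i.i.d.\ case to the i.i.d.\ case by coupling, and then to invoke the known i.i.d.\ rate.

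First I would set up the maximal coupling. By the coupling characterization of total variation, the hypothesis $\|\mathcal{L}(Z_1,\dots,Z_N)-\mu^{\otimes N}\|_{TV}\le \epsilon$ gives a coupling of $(Z_1,\dots,Z_N)$ with an i.i.d.\ sequence $(Z_1^\ast,\dots,Z_N^\ast)\sim \mu^{\otimes N}$ such that
\[
\mathbb{P}\bigl((Z_1,\dots,Z_N)\ne(Z_1^\ast,\dots,Z_N^\ast)\bigr)\le \epsilon.
\]
Using the same uniform variables $(U_i)_{i\le N}$ (independent of everything else), define $T_N^\ast:=\inf\{i\le N: U_i\le f(Z_i^\ast)/\ess\sup f\}\wedge N$, so that on the coincidence event $E:=\{(Z_i)_{i\le N}=(Z_i^\ast)_{i\le N}\}$ we have $T_N=T_N^\ast$ and $Z_{T_N}=Z^\ast_{T_N^\ast}$.

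Next I would record the standard rejection-sampling error for the i.i.d.\ copy. In the i.i.d.\ setting, von Neumann's construction yields $Z^\ast_T\sim\nu$ where $T:=\inf\{i\in\mathbb{N}: U_i\le f(Z_i^\ast)/\ess\sup f\}$, and
\[
\mathbb{P}(T>N)=\bigl(1-1/\ess\sup f\bigr)^N.
\]
On $\{T\le N\}$ we have $Z^\ast_{T_N^\ast}=Z^\ast_T\sim\nu$, while on $\{T>N\}$ the two laws can differ, so a direct comparison of the indicator events $\{Z^\ast_{T_N^\ast}\in A\}$ and $\{Z^\ast_T\in A\}$ for arbitrary Borel $A$ gives
\[
\|\mathcal{L}(Z^\ast_{T_N^\ast})-\nu\|_{TV}\le 2\bigl(1-1/\ess\sup f\bigr)^N.
\]

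Finally, combine by the triangle inequality. Since $Z_{T_N}=Z^\ast_{T_N^\ast}$ on $E$ and $\mathbb{P}(E^c)\le\epsilon$, the coupling bound gives $\|\mathcal{L}(Z_{T_N})-\mathcal{L}(Z^\ast_{T_N^\ast})\|_{TV}\le \epsilon$, whence
\[
\|\mathcal{L}(Z_{T_N})-\nu\|_{TV}\le \|\mathcal{L}(Z_{T_N})-\mathcal{L}(Z^\ast_{T_N^\ast})\|_{TV}+\|\mathcal{L}(Z^\ast_{T_N^\ast})-\nu\|_{TV}\le \epsilon+2\bigl(1-1/\ess\sup f\bigr)^N,
\]
which is the desired conclusion. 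There is no real obstacle here; the only mild care needed is to make sure the uniform variables $U_i$ can be taken independent of the coupling of $(Z_i)$ with $(Z_i^\ast)$, which is standard by enlarging the probability space.
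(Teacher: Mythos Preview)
Your argument is correct and is precisely the standard route one expects: maximal coupling to reduce to the i.i.d.\ case, the classical von Neumann bound $\mathbb{P}(T>N)=(1-1/\ess\sup f)^N$, and the triangle inequality. The paper itself omits the proof entirely, stating only that it ``is quite standard, and thus is omitted,'' so there is nothing to compare against beyond confirming that your argument is indeed the intended standard one.
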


\begin{proof}[Proof of Proposition \ref{tto}] We use the same notation as in the proof of Proposition \ref{lemtto}. Let $t \in \mathbb{R}$ be the $\frac{1}{2}-$quantile of $\widehat{T}$, and define $T_1:=\widehat{T}_{\theta_{-t}}$. By taking $T_1 \geq 0$ as the stopping rule, we obtain a $\frac{1}{2}$ portion of $\mu$. The idea now is to get the remaining $\frac{1}{2}$ portion of $\mu$ by filling-type argument. Note that the target distribution $\mathcal{L}(\widehat{X}_{T_1}|T_1<0)$ is absolutely continuous with respect to $\mu$ with the Radon-Nikodym density
$$f_1:=\frac{d \mathcal{L}(\widehat{X}_{T_1}|T_1<0)}{d \mu},$$
which is bounded by $2$. As indicated in Remark \ref{smixing}, the moving-window process $\widehat{X}$ is stationary and $\theta$-{\em mixing}. We apply Corollary \ref{impc} to $(\widehat{X}_t; t \in \mathbb{R}|T_1<0)$: for any fixed $N \in \mathbb{N}$, there exist random times $0 \leq T_1< \cdots <T_N$ on some event $E_N$ of probability larger than $\frac{3}{4}$ such that
$$||\mathcal{L}(\widehat{X}_{T_1}, \cdots, \widehat{X}_{T_N}|T_1<0, E_N)-\mu^{\otimes N}||_{TV} \leq \frac{1}{4}.$$
By Lemma \ref{quasiiid}, there is a random integer $M  \leq N$ such that 
$$||\mathcal{L}(\widehat{X}_{T_M}|T_1<0, E_N)-\mathcal{L}(\widehat{X}_{T_1}|T_1<0)||_{TV} \leq \frac{1}{4}+2 \left(1-\frac{1}{\ess \sup f_1}\right)^N .$$
By taking $N \in \mathbb{N}$ such that $(1-1/\ess \sup f_1)^N \leq \frac{1}{8}$, we retrieve a $\frac{1}{2}$ portion of the targeted $\mathcal{L}(\widehat{X}_{T_1}|T_1<0)$. Restricted to the sub-probability space $\{T_1<0\}$, we obtain a remaining $\frac{1}{4}$ portion of $\mu$. Repeat the algorithm, and we finally achieve the desired ($\frac{1}{2}+\frac{1}{4}+\frac{1}{8}+\cdots$) distribution $\mu$.  
\end{proof}
\subsection{An explicit embedding of Brownian bridge into Brownian motion}
\label{34}
In this subsection, we present a constructive proof of Theorem \ref{mainbis} due to Hermann Thorisson. Recall that $\widehat{X}$ is the moving-window process associated to a two-sided Brownian motion.
\begin{theorem} \cite{HTP}
\label{HHM}
Let $\Gamma$ be the random $\sigma$-finite measure defined as in \eqref{localtime}. Define
\begin{equation}
\label{finally}
T:=\inf\{t >0; \Gamma[0,t]=t\}.
\end{equation}
Then $T<\infty$ almost surely, and $\widehat{X}_T$ has the same distribution as standard Brownian bridge $(b^0_u; 0 \leq u \leq 1)$.
\end{theorem}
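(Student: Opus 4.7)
The plan is to split the proof into two parts: (i) verifying that $T < \infty$ almost surely, and (ii) identifying the distribution of $\widehat{X}_T$ as Brownian bridge. Both parts rest on viewing $T$ as a balancing random time between the diffuse stationary random measure $\Gamma$ and Lebesgue measure, both of which will be shown to have unit intensity on $\mathbb{R}$.

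For finiteness, I would first observe that $\mathbb{E}\Gamma[0,1] = 1$: by Proposition \ref{Revuz} the Palm \emph{probability} measure of $\Gamma$ is already normalized, so the normalization constant $\mathbb{E}\Gamma[0,1]$ appearing in \eqref{palm} must equal $1$. Combining this with the ergodicity of $\widehat{X}$ under time shifts (Remark \ref{smixing}), Birkhoff's theorem applied to the stationary random measure $\Gamma$ yields $\Gamma[0,t]/t \to 1$ almost surely as $t \to \infty$. Hence $D_{t} := \Gamma[0,t] - t$ is a continuous path with $D_{0}=0$ and $D_{t} = o(t)$ at infinity; since a.s. the initial window $\widehat{X}_{0}$ is not a bridge, $D_{t} < 0$ for small $t > 0$, and recurrence of the zero-mean stationary-increment process $D$ (a standard consequence of Birkhoff applied to suitable functionals of $\widehat{X}$) forces $D$ back up to the level $0$, giving $T < \infty$ a.s.

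The heart of the argument is part (ii). The identity $\Gamma[0,T] = T$ says that on the interval $[0,T]$ the measures $\Gamma$ and Lebesgue have accumulated exactly the same total mass, so $T$ can be interpreted as a balancing allocation transporting Lebesgue onto $\Gamma$. I would invoke the mass-transport / balancing-allocation theory of Last, M{\o}rters and Thorisson \cite{LMT}: for a jointly stationary diffuse random measure $\eta$ of unit intensity, the first-passage rule $T = \inf\{t > 0 : \eta[0,t] = t\}$ defines an unbiased time shift under which $\theta_{T}$ transports $\mathbb{P}$ to the Palm probability measure $\mathbb{P}_{\eta}$. Applied to $\eta = \Gamma$ this gives $\theta_{T} \widehat{X} \stackrel{d}{=} \mathbb{P}_{\Gamma}$, and projecting onto the $0$-marginal yields, via Proposition \ref{Revuz},
$$
\widehat{X}_{T} \stackrel{d}{=} (b^{0}_{u}; 0 \leq u \leq 1).
$$

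The main obstacle will be invoking the LMT machinery rigorously in this setting. One must verify that $\Gamma$ is jointly stationary with $\widehat{X}$ (immediate from \eqref{localtime} together with the stationarity of $\widehat{X}$), that its intensity is $1$ (shown above), and that it is a.s. diffuse with unbounded support on both half-lines; the diffuseness and the absence of isolated points follow from the fact that the Slepian zero set locally resembles a Brownian zero set by Lemma \ref{abszero}, while the unit intensity together with ergodicity gives unbounded support on both sides. A secondary delicate point is to confirm that the particular first-passage allocation \eqref{finally} is precisely the one for which LMT's transport conclusion applies, rather than a more symmetric stable-matching variant, which is what makes this construction explicit rather than merely existential as in Subsections \ref{32}--\ref{33}.
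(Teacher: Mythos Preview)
Your approach is essentially the same as the paper's: invoke the Last--M\"orters--Thorisson balancing-allocation result (the paper's Theorem~\ref{ppl}) to see that $T(s)=\inf\{t>s:\Gamma[s,t]=t-s\}$ balances Lebesgue measure and $\Gamma$, then use the Palm characterization of balancing allocations (the paper's Theorem~\ref{lstt}) to conclude that $\theta_T\widehat{X}$ has law $\mathbb{P}_\Gamma$, and finish with Proposition~\ref{Revuz}. Two small points: the paper checks the \emph{orthogonality} hypothesis of Theorem~\ref{ppl} (that $\Gamma$ is supported on a Lebesgue-null set), which you omit; and the paper does not argue finiteness separately via Birkhoff and recurrence of $D_t=\Gamma[0,t]-t$, but simply obtains $T<\infty$ as part of the allocation theorem---your recurrence sketch is plausible but would need more work to stand on its own, whereas citing Theorem~\ref{ppl} bypasses it entirely.
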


To proceed further, we need the following notions regarding transports of random measures, initiated by Holroyd and Peres \cite{HP2005}, and Last and Thorisson \cite{LT2009}.
\begin{definition}
\cite{HP2005,LT2009}
Let $(\Omega,\mathcal{F}, \mathbb{P})$ be a generic probability space, equipped with a flow $(\theta_t; t \in \mathbb{R})$.
\begin{enumerate}
\item
A measurable mapping $\tau: \Omega \times \mathbb{R} \rightarrow \mathbb{R}$ is called an allocation rule if
$$\tau(\theta_t w, s-t)=\tau(w,s)-t \quad \mbox{for}~s,t \in \mathbb{R} \quad \mathbb{P}~a.s.$$
\item
An allocation rule $\tau$ is said to balance two random measures $\xi$ and $\eta$ if
$$\int_{\mathbb{R}}1(\tau(s) \in \cdot) \xi(ds)=\eta \quad \mathbb{P}~a.s.$$
\end{enumerate}
\end{definition}

Triggered by the work of Liggett \cite{Liggett}, and Holroyd and Liggett \cite{HL} on transporting counting measures on $\mathbb{Z}^d$ to the Bernoulli random measure, allocation rules of counting measures on $\mathbb{Z}^d$ to an ergodic point process have received much attention, see e.g. Holroyd and Peres \cite{HP2005}, Hoffman et al. \cite{HHP}, Chatterjee et al. \cite{CPPR}, and Last and Thorisson \cite{LT2009} among others. The following result of Last et al. \cite[Theorem $5.1$]{LMT} gives a balancing allocation rule for diffuse random measures on the line.
\begin{theorem} \cite{LMT}
\label{ppl}
Let $\xi$ and $\eta$ be invariant orthogonal diffuse random measures on $\mathbb{R}$ with finite intensities. Assume that
\begin{equation}
\label{cddd}
\mathbb{E}[\xi[0,1]| \mathcal{I}] =\mathbb{E}[\eta[0,1]|\mathcal{I}] \quad a.s.,
\end{equation}
where $\mathcal{I}$ is the invariant $\sigma$-field. Then the mapping 
$$\tau(s):=\inf\{t>s; \xi[s,t]=\eta[s,t]\} \quad \mbox{for all}~ s \in \mathbb{R}$$
is an allocation rule balancing $\xi$ and $\eta$.
\end{theorem}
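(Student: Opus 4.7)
The plan is to invoke Theorem~\ref{ppl} with $\xi=\mathrm{Leb}$ (Lebesgue measure on $\mathbb{R}$) and $\eta=\Gamma$ (the local times of $\widehat{X}$ at level $\mathcal{BR}^0$ defined in \eqref{localtime}). This will simultaneously yield that $T=\tau(0)<\infty$ a.s.\ and that the map
\[
\tau(s):=\inf\{t>s:\Gamma[s,t]=t-s\}
\]
is an allocation rule balancing $\mathrm{Leb}$ and $\Gamma$. I then upgrade this balancing statement to the distributional identity $\theta_{T}\widehat{X}\stackrel{d}{=}\mathbb{P}_{\Gamma}$ (the Palm probability measure of $\Gamma$), and conclude by reading off the $0$-marginal via Proposition~\ref{Revuz}.

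First I verify the hypotheses of Theorem~\ref{ppl}. Both $\mathrm{Leb}$ and $\Gamma$ are stationary and diffuse (the diffuseness of $\Gamma$ follows from the Brownian character of the Slepian zero set, Lemma~\ref{abszero}). They are mutually singular since $\Gamma$ is carried by the Slepian zero set $\{u:\widehat{X}_u\in\mathcal{BR}^0\}$, which has Lebesgue measure zero almost surely. The intensities are equal and finite: $\mathbb{E}[\mathrm{Leb}[0,1)]=1$, while the computation inside the proof of Proposition~\ref{Revuz} with $f\equiv 1$ gives
\[
\mathbb{E}[\Gamma[0,1)]=\mathbb{P}_{\Gamma}\mathbf{1}=\lim_{\veps\to0}\sqrt{\pi/2}\cdot\veps^{-1}\mathbb{P}(|w_{1}|\le\veps)=\sqrt{\pi/2}\cdot 2\phi(0)=1,
\]
so the normalization $\sqrt{\pi/2}$ in \eqref{localtime} is exactly calibrated so that $\mathbb{P}_{\Gamma}$ is itself a probability measure. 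Finally, by Remark~\ref{smixing} the moving-window process $\widehat{X}$ is $\theta$-mixing and therefore ergodic, so the invariant $\sigma$-field $\mathcal{I}$ is $\mathbb{P}$-trivial, and \eqref{cddd} reduces to the intensity equality $1=1$. Theorem~\ref{ppl} then applies and delivers both $T<\infty$ a.s.\ and the balancing identity
\[
\int_{\mathbb{R}} \mathbf{1}(\tau(s)\in A)\,ds=\Gamma(A)\quad\text{a.s.,}\qquad A\in\mathcal{B}(\mathbb{R}).
\]

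The main step is to convert the balancing property into the distributional statement $\theta_{T}\widehat{X}\stackrel{d}{=}\mathbb{P}_{\Gamma}$. Fix a bounded measurable $\Psi:\mathcal{C}_0[0,1]^{\mathbb{R}}\to\mathbb{R}$ and apply the balancing identity to $h(t)=\Psi(\theta_{t}\widehat{X})\mathbf{1}_{[0,1)}(t)$ to obtain
\[
\int_0^1 \Psi(\theta_{t}\widehat{X})\,\Gamma(dt)=\int_{\mathbb{R}}\Psi(\theta_{\tau(s)}\widehat{X})\mathbf{1}_{[0,1)}(\tau(s))\,ds\quad\text{a.s.}
\]
Taking expectations, the left-hand side is $\mathbb{P}_{\Gamma}\Psi$ by definition~\eqref{palm}. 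For the right-hand side I use the flow equivariance $\tau(s)-s=\tau_{\theta_{s}\widehat{X}}(0)$ together with the stationarity of $\widehat{X}$: under $\mathbb{P}$, the pair $(\theta_{\tau(s)}\widehat{X},\tau(s)-s)$ has the same law as $(\theta_{T}\widehat{X},T)$ for every $s\in\mathbb{R}$. A Fubini interchange then gives
\[
\mathbb{P}_{\Gamma}\Psi=\mathbb{E}\!\left[\Psi(\theta_{T}\widehat{X})\int_{\mathbb{R}}\mathbf{1}(s\in[-T,1-T))\,ds\right]=\mathbb{E}[\Psi(\theta_{T}\widehat{X})].
\]
Since $\mathbb{P}_{\Gamma}\mathbf{1}=1$, this is the desired identification of laws. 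Specializing to $\Psi(w)=f(w_0)$ and invoking Proposition~\ref{Revuz} to identify $\mathbb{P}_{\Gamma}\circ\Pi_{0}^{-1}=\mathbb{P}^{\mathbf{W}^0}$, I conclude $\widehat{X}_{T}\stackrel{d}{=}(b_u^0;0\le u\le 1)$.

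The main obstacle is the last step: the Fubini manipulation linking an allocation-rule transport to the Palm probability measure. One must handle the fact that $T$ is itself a functional of $\widehat{X}$ (through $\Gamma$ and $\tau$), so the stationarity argument that identifies the law of $\theta_{\tau(s)}\widehat{X}$ with that of $\theta_{T}\widehat{X}$ must be applied jointly with the indicator $\mathbf{1}(\tau(s)\in[0,1))$. Subsidiary but routine matters are checking that $\Gamma$ is indeed diffuse and mutually singular with Lebesgue measure (both inherited from the known structure of Brownian zeros via Lemma~\ref{abszero}) and confirming the normalization $\mathbb{E}\Gamma[0,1)=1$.
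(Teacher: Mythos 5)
Your proposal does not prove the statement in question. Theorem~\ref{ppl} is the assertion, quoted from Last, M\"orters and Thorisson \cite{LMT}, that under the hypotheses on $\xi,\eta$ the map $\tau(s)=\inf\{t>s:\xi[s,t]=\eta[s,t]\}$ is an allocation rule balancing $\xi$ and $\eta$. Your argument opens with ``the plan is to invoke Theorem~\ref{ppl} with $\xi=\mathrm{Leb}$ and $\eta=\Gamma$'' and then uses that balancing conclusion as a given. That is circular as a proof of Theorem~\ref{ppl}: you are assuming the very statement you are asked to establish. The paper itself supplies no proof of Theorem~\ref{ppl} either; it is an imported black box, and the actual proof (in \cite{LMT}) is an exchange/transport argument for diffuse stationary measures that never appears in your text.

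What you have actually written is a proof of Theorem~\ref{HHM} (Thorisson's explicit embedding $T=\inf\{t>0:\Gamma[0,t]=t\}$), and as such it is essentially sound and in fact takes a slightly different route than the paper. The paper converts ``$\tau$ balances $\mathrm{Leb}$ and $\Gamma$'' into the Palm identity $\mathbb{P}_{\mathcal{L}^1}(\theta_T\widehat{X}\in\cdot)=\mathbb{P}_{\Gamma}$ by citing the general characterization Theorem~\ref{lstt} from Last and Thorisson \cite{LT2009}; you instead prove that implication directly by a Fubini exchange, using the flow covariance $\tau(s)-s=\tau_{\theta_s\widehat{X}}(0)$, stationarity of $\widehat{X}$, and the fact that $\int_{\mathbb{R}}\mathbf{1}_{[-T,1-T)}(s)\,ds=1$. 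Your hypothesis-checking (diffuseness, mutual singularity, $\mathbb{E}\Gamma[0,1)=1$, triviality of $\mathcal{I}$) matches the paper's Corollary. But none of this addresses why $\tau$ balances the two measures in the first place, which is the content of Theorem~\ref{ppl}; that step remains unproved in your write-up.
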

\begin{corollary}
Let $\Gamma$ be the random $\sigma$-finite measure defined as in \eqref{localtime}. Define
\begin{equation}
T(s):=\inf \{t>s; \Gamma[s,t]=t-s\} \quad \mbox{for all}~s \in \mathbb{R}.
\end{equation}
Then $(T(s); s \in \mathbb{R})$ is an allocation rule balancing the Lebesgue measure $\mathcal{L}^1$ on $\mathbb{R}$ and $\Gamma$.
\end{corollary}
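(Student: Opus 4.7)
The plan is to apply Theorem \ref{ppl} directly with $\xi:=\Gamma$ and $\eta:=\mathcal{L}^1$. Under these identifications the mapping displayed in Theorem \ref{ppl} reads
$$
\tau(s)=\inf\{t>s;\ \Gamma[s,t]=\mathcal{L}^1[s,t]\}=\inf\{t>s;\ \Gamma[s,t]=t-s\}=T(s),
$$
so once the hypotheses of Theorem \ref{ppl} are verified, the conclusion is immediate. Accordingly, the work reduces to checking that $\Gamma$ and $\mathcal{L}^1$ are (i) stationary, (ii) diffuse, (iii) orthogonal, (iv) of finite intensity, and (v) satisfy the conditional intensity matching \eqref{cddd}.

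Properties (i)--(iii) are essentially inherited from the construction of $\Gamma$. Stationarity of $\mathcal{L}^1$ is trivial, and stationarity of $\Gamma$ follows from the stationarity of $\widehat{X}$ (indeed, the pair $(\widehat{X},\Gamma)$ was shown to be jointly stationary in Subsection \ref{31}). Diffuseness of $\mathcal{L}^1$ is clear; for $\Gamma$, the local absolute continuity provided by Lemma \ref{abszero} reduces the question to the well-known diffuseness of Brownian local time at zero. Orthogonality amounts to showing that $\supp \Gamma$ has zero Lebesgue measure almost surely: but $\supp \Gamma$ is contained in the Slepian zero set $\{t\in\mathbb R:\widehat B_{t+1}=\widehat B_t\}$, and the latter has zero Lebesgue measure by Lemma \ref{abszero} together with the fact that the Brownian zero set is Lebesgue-null.

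For (iv) and (v), first note that by Remark \ref{smixing} the moving-window process $\widehat{X}$ is $\theta$-mixing, hence the shift-invariant $\sigma$-field $\mathcal I$ is trivial under $\mathbb P$. Consequently \eqref{cddd} reduces to the unconditional identity $\mathbb E\,\Gamma[0,1]=\mathbb E\,\mathcal{L}^1[0,1]=1$. The normalizing constant $\sqrt{\pi/2}$ in \eqref{localtime} is precisely what makes this hold: by Fubini and stationarity,
$$
\mathbb E\,\Gamma[0,1]
=\lim_{\epsilon\to 0}\sqrt{\tfrac{\pi}{2}}\,\frac{1}{\epsilon}
\int_0^1\mathbb P\bigl(\widehat X_u\in\mathcal{BR}^{[-\epsilon,\epsilon]}\bigr)\,du
=\lim_{\epsilon\to 0}\sqrt{\tfrac{\pi}{2}}\,\frac{1}{\epsilon}\,
\mathbb P\bigl(B_1\in[-\epsilon,\epsilon]\bigr)
=\sqrt{\tfrac{\pi}{2}}\cdot 2\phi(0)=1,
$$
so both $\Gamma$ and $\mathcal{L}^1$ have intensity one.

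With all the hypotheses in hand, Theorem \ref{ppl} yields that $(T(s);s\in\mathbb R)$ is an allocation rule balancing $\Gamma$ and $\mathcal L^1$. The only delicate point in the plan is the orthogonality claim, i.e.\ verifying that $\Gamma$ is genuinely carried by a Lebesgue-null random set; this is what forces us to invoke the absolute continuity relation of Lemma \ref{abszero} rather than arguing abstractly from the defining limit \eqref{localtime}. Everything else is bookkeeping and a one-line normalization computation.
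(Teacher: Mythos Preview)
Your approach is essentially the paper's: apply Theorem \ref{ppl} and verify invariance, diffuseness, orthogonality, finite intensity, and the triviality of $\mathcal I$ (hence \eqref{cddd}). Your checks of these hypotheses match the paper's, and your computation of $\mathbb{E}\,\Gamma[0,1]=1$ is a slightly more explicit version of what the paper obtains by reference to Proposition~\ref{Revuz}.

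There is, however, one genuine slip. You apply Theorem~\ref{ppl} with $\xi:=\Gamma$ and $\eta:=\mathcal{L}^1$, and correctly conclude that $T$ ``balances $\Gamma$ and $\mathcal{L}^1$''. But the Corollary asserts that $T$ balances $\mathcal{L}^1$ and $\Gamma$, and by the definition just above Theorem~\ref{ppl}, ``$\tau$ balances $\xi$ and $\eta$'' means $\int 1(\tau(s)\in\cdot)\,\xi(ds)=\eta$, which is \emph{not} symmetric in $\xi$ and $\eta$. It is precisely the order $\xi=\mathcal{L}^1$, $\eta=\Gamma$ that is used downstream in Theorem~\ref{HHM} via Theorem~\ref{lstt} to obtain $\mathbb{P}_{\mathcal{L}^1}(\theta_T\widehat X\in\cdot)=\mathbb{P}_{\Gamma}$; your order would instead yield $\mathbb{P}_{\Gamma}(\theta_T\widehat X\in\cdot)=\mathbb{P}_{\mathcal{L}^1}$, which is the wrong direction. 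The fix is trivial---simply set $\xi:=\mathcal{L}^1$ and $\eta:=\Gamma$ as the paper does; the defining equation $\xi[s,t]=\eta[s,t]$ is symmetric, so the mapping $\tau$ is unchanged, and all your hypothesis verifications carry over verbatim.
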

\begin{proof} We want to apply Theorem \ref{ppl} to $\xi:=\mathcal{L}^1$ and $\eta:=\Gamma$. We need to check the conditions. First it is obvious that $\mathcal{L}^1$ and $\Gamma$ are invariant diffuse measures on the real line. Note that the measure $\Gamma$ is supported on the set $\{t \in \mathbb{R}; \widehat{X}_t \in \mathcal{BR}^0\}$ almost surely. The distribution of $\{t \in \mathbb{R}; \widehat{X}_t \in \mathcal{BR}^0\}$ is the same as that of $\{t \in \mathbb{R}; S_t=0\}$, which has null Lebesgue measure almost surely. Therefore, the measures $\mathcal{L}^1$ and $\Gamma$ are orthogonal. In the proof of Proposition \ref{tsc}, we know that $\mathcal{I}$ is trivial under the distribution of $\widehat{X}$. In addition, $\mathbb{E}\Gamma[0,1]=1$ by the computation as in Proposition \ref{Revuz}. Thus, we have the condition \eqref{cddd} in the case of $\xi:=\mathcal{L}^1$ and $\eta:=\Gamma$. 
\end{proof}

In terms of Palm measures, Last and Thorisson \cite[Theorem $4.1$]{LT2009} gave a necessary and sufficient condition for an allocation rule to balance two random measures. See also Last et al. \cite[Theorem $2.1$]{LMT}.
\begin{theorem} \cite{LT2009}
\label{lstt}
Consider two random measures $\xi$ and $\eta$ on $\mathbb{R}$ and an allocation rule $\tau$. Then $\tau$ balances $\xi$ and $\eta$ if and only if
$$\mathbb{P}_{\xi}(\theta_{\tau(0)}w \in \cdot)=\mathbb{P}_{\eta},$$
where $\mathbb{P}_{\xi}$ (resp. $\mathbb{P}_{\eta}$) is the Palm measure of $\xi$ (resp. $\eta$).
\end{theorem}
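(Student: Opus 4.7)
The plan is to deduce the equivalence from the refined Campbell--Mecke formula together with the covariance property of the allocation rule. Recall that for any jointly stationary random measure $\xi$ with Palm measure $\mathbb{P}_\xi$, the defining identity $\mathbb{P}_\xi(f) = \mathbb{E}\int_0^1 f(\theta_t w) \xi(dt)$ extends, via Fubini and stationarity, to the Campbell--Mecke formula
\begin{equation*}
\mathbb{E}\int_{\mathbb{R}} g(w, s) \xi(ds) = \int_{\mathbb{R}} \int g(\theta_{-s} w, s) \, d\mathbb{P}_\xi(w) \, ds
\end{equation*}
for any nonnegative measurable $g$. The covariance of the allocation rule, $\tau(\theta_t w, s-t) = \tau(w, s) - t$, specialized at $s=0$ and $t = -s'$, rewrites as $\tau(\theta_{-s'} w, s') = \tau(w, 0) + s'$, and correspondingly $\theta_{\tau(\theta_{-s} w, s)} \theta_{-s} w = \theta_{\tau(w, 0)} w$. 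This algebraic identity will be the engine of the argument.

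For the forward direction, assume $\tau$ balances $\xi$ and $\eta$ and let $f$ be bounded measurable. Using the balancing identity inside the Palm integral gives
\begin{equation*}
\mathbb{P}_\eta(f) = \mathbb{E}\int_0^1 f(\theta_t w)\, \eta(dt) = \mathbb{E}\int_{\mathbb{R}} 1(\tau(s) \in [0,1])\, f(\theta_{\tau(s)} w)\, \xi(ds).
\end{equation*}
Apply Campbell--Mecke with $g(w, s) := 1(\tau(w, s) \in [0,1]) f(\theta_{\tau(w, s)} w)$ and then substitute the covariance identity. The indicator becomes $1(\tau(w, 0) + s \in [0,1])$ and the argument of $f$ collapses to $\theta_{\tau(w,0)} w$, which is $s$-free; the Lebesgue integral over $s$ is then just the length of $[0,1] - \tau(w,0)$, namely $1$. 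This yields $\mathbb{P}_\eta(f) = \int f(\theta_{\tau(0)} w) \, d\mathbb{P}_\xi(w)$, the required push-forward identity.

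For the converse, define the auxiliary random measure $\eta' := \int 1(\tau(s) \in \cdot) \xi(ds)$. The pair $(w, \eta')$ is jointly stationary by the covariance of $\tau$ and the stationarity of $(w, \xi)$, and by construction $\tau$ tautologically balances $\xi$ and $\eta'$. Applying the forward direction to $(\xi, \eta', \tau)$ gives $\mathbb{P}_{\eta'} = \mathbb{P}_\xi(\theta_{\tau(0)} \in \cdot)$, which equals $\mathbb{P}_\eta$ by hypothesis. Conclude by invoking the standard uniqueness theorem for Palm measures of jointly stationary random measures on a common stationary background: two such measures with equal Palm measures must agree, which delivers $\eta = \eta'$ and hence the balancing property.

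The main obstacle I expect is the converse direction: going from an equality of $\sigma$-finite Palm measures back to a pathwise (or at least joint-distributional) identification of $\eta$ with $\tau_* \xi$ requires the Palm-uniqueness principle in precisely the form suited to jointly stationary pairs, rather than the simpler version for point processes alone. The forward direction, by contrast, is essentially a mechanical manipulation once Campbell--Mecke and the covariance of $\tau$ are combined; the only care needed is handling the integrability of $f \circ \theta_{\tau(0)}$ against the $\sigma$-finite measure $\mathbb{P}_\xi$, which is controlled by the hypothesis that $\xi$ has finite intensity.
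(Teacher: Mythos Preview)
The paper does not prove this theorem at all: it is quoted verbatim from Last and Thorisson \cite[Theorem~4.1]{LT2009} (with a pointer also to \cite[Theorem~2.1]{LMT}) and used as a black box in the proof of Theorem~\ref{HHM}. There is therefore no ``paper's own proof'' to compare against.

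That said, your argument is essentially the standard one given in \cite{LT2009}: the forward direction is exactly the Campbell--Mecke computation combined with the flow-covariance of $\tau$, and the converse is the push-forward trick $\eta':=\tau_*\xi$ followed by Palm uniqueness. One point deserves care. In the paper's notation, $\mathbb{P}_\xi$ is explicitly declared to be only the $E^{\mathbb{R}}$-marginal of the full Palm measure $\mathbb{P}_{Z,\xi}$ (see the sentence after \eqref{palm}). Uniqueness of the Palm measure in the form you invoke --- recovering the random measure from its Palm version --- holds for the \emph{full} Palm measure on the pair $(Z,\xi)$, not a priori for the marginal on paths of $Z$ alone. In the canonical setting of \cite{LT2009} the sample point carries both the process and the measure, so this distinction disappears; but if you are working literally with the paper's marginal $\mathbb{P}_\xi$, you should either (i) note that $\eta$ and $\eta'$ are both measurable functionals of the same underlying $\omega$ and appeal to the full Palm measure on $\Omega$, or (ii) state the theorem with $\mathbb{P}_{Z,\xi}$ in place of $\mathbb{P}_\xi$. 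You already flagged this as the main obstacle, and it is indeed the only place where the argument needs tightening.
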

\begin{proof}[Proof of Theorem \ref{HHM}] Applying Theorem \ref{lstt} to $\xi:=\mathcal{L}^1$, $\eta:=\Gamma$ and $\tau:=T$, we get:
$$\mathbb{P}_{\mathcal{L}^1}(\theta_T \widehat{X} \in \cdot)=\mathbb{P}_{\Gamma}.$$
where $T$ is defined by \eqref{finally}. Note that for a stationary process, the Palm version of the Lebesgue measure is the stationary process itself. In particular, $\mathbb{P}_{\mathcal{L}^1}$ is the distribution of the moving-window process $\widehat{X}$. By Proposition \ref{Revuz}, the $0$-marginal of $\mathbb{P}_{\Gamma}$ is standard Brownian bridge. Theorem \ref{HHM} follows readily from these facts.
\end{proof}

Finally, let us derive a simple consequence of Theorem \ref{HHM}. Let $(b^{ps}_u; 0 \leq u \leq 1)$ be the {\em pseudo Brownian bridge} defined by
$$b^{ps}_u:=\frac{B_{u\tau_1}}{\sqrt{\tau_1}} \quad \mbox{for all}~0 \leq u \leq 1,$$
where $\tau_1:=\inf\{t \geq 0; L_t>1\}$ is the inverse local times of Brownian motion. Biane et al. \cite{BLY} proved that the distribution of the pseudo Brownian bridge is mutually absolutely continuous relative to that of standard Brownian bridge. That is, for $f: \mathcal{C}_{0}[0,1] \rightarrow \mathbb{R}$ a bounded measurable function,
$$\mathbb{E}[f(b^{ps}_u; 0 \leq u \leq 1)]=\mathbb{E}\left[\sqrt{\frac{2}{\pi}}L_1^{br}f(b^0_u; 0 \leq u \leq 1)\right],$$
where $L^{br}_1$ is the local time of Brownian bridge at level $0$ up to time $1$.

From Theorem \ref{HHM}, we are able to find a sequence of i.i.d. Brownian bridges by iteration of the construction \eqref{finally}. According to Pitman and Tang \cite[Section $3.5$]{PTpattern}, we can apply Rost's filling scheme \cite{CO,Rost1} to sample distributions which are absolutely continuous relative to that of Brownian bridge. In particular,
\begin{corollary}
There exists a random time $T \geq 0$ such that $(B_{T+u}-B_T; 0 \leq u \leq 1)$ has the same distribution as $(b^{ps}_u; 0 \leq u \leq 1)$.
\end{corollary}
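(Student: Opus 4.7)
The plan is to combine Theorem \ref{HHM} with Rost's filling scheme, exactly as outlined in the sentence preceding the corollary. First I would iterate the construction of \eqref{finally} to produce an i.i.d.\ sequence of standard Brownian bridges embedded in Brownian motion. Concretely, let $T_1 := T$ be the random time given by Theorem \ref{HHM}. Because the moving-window process $\widehat{X}_s$ and $\widehat{X}_{s+r}$ are independent for $r \ge 1$, the post-$(T_1+1)$ portion of Brownian motion is itself a Brownian motion independent of $(B_u; 0 \le u \le T_1+1)$; applying Theorem \ref{HHM} to this fresh Brownian motion yields $T_2 > T_1+1$ with $\widehat{X}_{T_2}$ a standard Brownian bridge independent of $\widehat{X}_{T_1}$. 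Iterating produces random times $0 \le T_1 < T_2 < \cdots$ such that $(\widehat{X}_{T_n})_{n \ge 1}$ is an i.i.d.\ sequence with common law $\mathbb{P}^{\bf W^0}$, all realized as spacetime shifts of the original Brownian motion.

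Second, I would invoke the Biane--Le Gall--Yor absolute continuity relation quoted just above the corollary: the law of the pseudo Brownian bridge $b^{ps}$ is absolutely continuous with respect to that of the standard Brownian bridge $b^0$, with Radon--Nikodym derivative
\begin{equation*}
\frac{d \mathcal{L}(b^{ps})}{d \mathcal{L}(b^0)}(w) \;=\; \sqrt{\frac{2}{\pi}}\, L_1^{br}(w),
\end{equation*}
where $L_1^{br}$ is the local time of the path $w$ at level $0$ up to time $1$. In particular $\mathcal{L}(b^{ps}) \ll \mathcal{L}(b^0)$.

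Third, I would feed the i.i.d.\ sequence $(\widehat{X}_{T_n})_{n \ge 1}$ into Rost's filling scheme \cite{CO,Rost1} as implemented in Pitman and Tang \cite[Section 3.5]{PTpattern}. This produces an almost surely finite random index $N \ge 1$, measurable with respect to the sequence together with auxiliary randomization, such that $\widehat{X}_{T_N}$ has exactly the law $\mathcal{L}(b^{ps})$. Setting $T := T_N$ then yields a random time $T \ge 0$ for which $(B_{T+u} - B_T; 0 \le u \le 1)$ is distributed as $(b^{ps}_u; 0 \le u \le 1)$.

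The step that carries the real content is the use of Rost's filling scheme rather than a direct von Neumann rejection: the density $\sqrt{2/\pi}\, L_1^{br}$ is unbounded on $\mathcal{C}_0[0,1]$, so the rejection algorithm from Subsection \ref{33} (which required $\esssup f < \infty$) does not apply. Filling schemes are precisely designed to handle arbitrary absolutely continuous targets over i.i.d.\ source samples, and the general validity of this construction for bridge-absolutely-continuous laws is already established in \cite[Section 3.5]{PTpattern}. The rest of the proof reduces to the routine verifications that the iterates $T_n$ are genuine spacetime shifts of the ambient Brownian motion and that the independence needed to iterate Theorem \ref{HHM} follows from the $1$-dependence of the moving-window process.
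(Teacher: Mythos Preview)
Your proposal is correct and follows exactly the route the paper indicates in the paragraph preceding the corollary: iterate Theorem \ref{HHM} to obtain an i.i.d.\ sequence of embedded Brownian bridges, then apply Rost's filling scheme from \cite[Section 3.5]{PTpattern} to the absolutely continuous target $\mathcal{L}(b^{ps})$. The paper gives no further argument beyond that sketch, and your observation that $T_1+1$ is a stopping time in the Brownian filtration (so that the strong Markov property supplies the independent fresh Brownian motion needed for the iteration), together with your remark that the unbounded density $\sqrt{2/\pi}\,L_1^{br}$ rules out plain von Neumann rejection, are both accurate and make the sketch precise.
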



\begin{thebibliography}{99}

\bibitem{Abr}
Julia Abrahams.
\newblock A survey of recent progress on level-crossing problems for random
  processes.
\newblock In {\em Communications and Networks}, pages 6--25. Springer, 1986.

\bibitem{AT} David~J. Aldous and Hermann Thorisson. \newblock Shift-coupling. \newblock {\em Stochastic Process. Appl.}, 44(1):1--14, 1993. \MR{1198659}

\bibitem{AW} Jean-Marc Aza{\"{\i}}s and Mario Wschebor. \newblock {\em Level sets and extrema of random processes and fields}. \newblock John Wiley \& Sons, Inc., Hoboken, NJ, 2009. \MR{2478201}

\bibitem{Barlow} M.~T. Barlow. \newblock Study of a filtration expanded to include an honest time. \newblock {\em Z. Wahrsch. Verw. Gebiete}, 44(4):307--323, 1978. \MR{0509204}

\bibitem{Berman} Simeon~M. Berman. \newblock Local times and sample function properties of stationary {G}aussian  processes. \newblock {\em Trans. Amer. Math. Soc.}, 137:277--299, 1969. \MR{0239652}

\bibitem{BertoinLeJan} J.~Bertoin and Y.~Le~Jan. \newblock Representation of measures by balayage from a regular recurrent  point. \newblock {\em Ann. Probab.}, 20(1):538--548, 1992. \MR{1143434}

\bibitem{BCP} Jean Bertoin, Lo{\"{\i}}c Chaumont, and Jim Pitman. \newblock Path transformations of first passage bridges. \newblock {\em Electron. Comm. Probab.}, 8:155--166 (electronic), 2003. \MR{2042754}

\bibitem{BLY} Ph. Biane, J.-F. Le~Gall, and M.~Yor. \newblock Un processus qui ressemble au pont brownien. \newblock In {\em S\'eminaire de {P}robabilit\'es, {XXI}}, volume 1247 of {\em  Lecture Notes in Math.}, pages 270--275. Springer, Berlin, 1987. \MR{0941990}

\bibitem{BY} Ph. Biane and M.~Yor. \newblock Quelques pr\'ecisions sur le m\'eandre brownien. \newblock {\em Bull. Sci. Math. (2)}, 112(1):101--109, 1988. \MR{0942801}

\bibitem{Bill} Patrick Billingsley. \newblock {\em Convergence of probability measures}. \newblock John Wiley \& Sons, Inc., New York-London-Sydney, 1968. \MR{0233396}

\bibitem{Bismut} Jean-Michel Bismut. \newblock Last exit decompositions and regularity at the boundary of transition  probabilities. \newblock {\em Z. Wahrsch. Verw. Gebiete}, 69(1):65--98, 1985. \MR{0775853}

\bibitem{BD} David Blackwell and Lester Dubins. \newblock Merging of opinions with increasing information. \newblock {\em Ann. Math. Statist.}, 33:882--886, 1962. \MR{0149577}

\bibitem{BL} Ian~F. Blake and William~C. Lindsey. \newblock Level-crossing problems for random processes. \newblock {\em IEEE Trans. Information Theory}, IT-19:295--315, 1973. \MR{0370729}

\bibitem{Blumenthal} R.~M. Blumenthal. \newblock An extended {M}arkov property. \newblock {\em Trans. Amer. Math. Soc.}, 85:52--72, 1957. \MR{0088102}

\bibitem{BR} Richard~C. Bradley. \newblock Basic properties of strong mixing conditions. {A} survey and some  open questions. \newblock {\em Probab. Surv.}, 2:107--144, 2005. \newblock Update of, and a supplement to, the 1986 original. \MR{2178042}

\bibitem{Burdzy} Krzysztof Burdzy and Michael Scheutzow. \newblock Forward {B}rownian motion. \newblock {\em Probab. Theory Related Fields}, 160(1-2):95--126, 2014. \MR{3256810}

\bibitem{CO} R.~V. Chacon and D.~S. Ornstein. \newblock A general ergodic theorem. \newblock {\em Illinois J. Math.}, 4:153--160, 1960. \MR{0110954}

\bibitem{CPPR} Sourav Chatterjee, Ron Peled, Yuval Peres, and Dan Romik. \newblock Phase transitions in gravitational allocation. \newblock {\em Geom. Funct. Anal.}, 20(4):870--917, 2010. \MR{2729280}

\bibitem{CL} Harald Cram{\'e}r and M.~R. Leadbetter. \newblock {\em Stationary and related stochastic processes. {S}ample function  properties and their applications}. \newblock John Wiley \& Sons, Inc., New York-London-Sydney, 1967. \MR{0217860}

\bibitem{Doob} J.~L. Doob. \newblock The {B}rownian movement and stochastic equations. \newblock {\em Ann. of Math. (2)}, 43:351--369, 1942. \MR{0006634}

\bibitem{DJ} E.~Dynkin and A.~Jushkevich. \newblock Strong {M}arkov processes. \newblock {\em Teor. Veroyatnost. i Primenen.}, 1:149--155, 1956. \MR{0088103}

\bibitem{FGrevuz} P.~J. Fitzsimmons and R.~K. Getoor. \newblock Smooth measures and continuous additive functionals of right {M}arkov  processes. \newblock In {\em It\^o's stochastic calculus and probability theory}, pages  31--49. Springer, Tokyo, 1996. \MR{1439516}

\bibitem{FPY} Pat Fitzsimmons, Jim Pitman, and Marc Yor. \newblock Markovian bridges: construction, {P}alm interpretation, and splicing. \newblock In {\em Seminar on {S}tochastic {P}rocesses, 1992 ({S}eattle, {WA},  1992)}, volume~33 of {\em Progr. Probab.}, pages 101--134. Birkh\"auser  Boston, Boston, MA, 1993. \MR{1278079}

\bibitem{Fukurevuz} Masatoshi Fukushima. \newblock On additive functionals admitting exceptional sets. \newblock {\em J. Math. Kyoto Univ.}, 19(2):191--202, 1979. \MR{0545703}

\bibitem{GH} Donald Geman and Joseph Horowitz. \newblock Occupation densities. \newblock {\em Ann. Probab.}, 8(1):1--67, 1980. \MR{0556414}

\bibitem{Gthesis}
Daniel~Sebastian Gentner.
\newblock {\em Palm Theory, Mass Transports and Ergodic Theory for
  Group-stationary Processes}.
\newblock KIT Scientific Publishing, 2011.

\bibitem{Getoors} R.~K. Getoor. \newblock Additive functionals and excessive functions. \newblock {\em Ann. Math. Statist.}, 36:409--422, 1965. \MR{0172335}

\bibitem{Getoorsplit} R.~K. Getoor. \newblock Splitting times and shift functionals. \newblock {\em Z. Wahrsch. Verw. Gebiete}, 47(1):69--81, 1979. \MR{0521533}

\bibitem{GetSh} R.~K. Getoor and M.~J. Sharpe. \newblock Last exit times and additive functionals. \newblock {\em Ann. Probability}, 1:550--569, 1973. \MR{0353468}

\bibitem{GetShtri} R.~K. Getoor and M.~J. Sharpe. \newblock Last exit decompositions and distributions. \newblock {\em Indiana Univ. Math. J.}, 23:377--404, 1973/74. \MR{0334335}

\bibitem{GetShbis} R.~K. Getoor and M.~J. Sharpe. \newblock The {M}arkov property at co-optional times. \newblock {\em Z. Wahrsch. Verw. Gebiete}, 48(2):201--211, 1979. \MR{0534845}

\bibitem{GSde} R.~K. Getoor and M.~J. Sharpe. \newblock Excursions of dual processes. \newblock {\em Adv. in Math.}, 45(3):259--309, 1982. \MR{0673804}

\bibitem{HPS}
Alan Hammond, G\'{a}bor Pete, and Oded Schramm.
\newblock Local time on the exceptional set of dynamical percolation, and the
  {I}ncipient {I}nfinite {C}luster.
\newblock 2012.
\newblock arXiv: 1208.3826.

\bibitem{HHP} Christopher Hoffman, Alexander~E. Holroyd, and Yuval Peres. \newblock A stable marriage of {P}oisson and {L}ebesgue. \newblock {\em Ann. Probab.}, 34(4):1241--1272, 2006. \MR{2257646}

\bibitem{HL} Alexander~E. Holroyd and Thomas~M. Liggett. \newblock How to find an extra head: optimal random shifts of {B}ernoulli and  {P}oisson random fields. \newblock {\em Ann. Probab.}, 29(4):1405--1425, 2001. \MR{1880225}

\bibitem{HP2005} Alexander~E. Holroyd and Yuval Peres. \newblock Extra heads and invariant allocations. \newblock {\em Ann. Probab.}, 33(1):31--52, 2005. \MR{2118858}

\bibitem{Hunt} G.~A. Hunt. \newblock Some theorems concerning {B}rownian motion. \newblock {\em Trans. Amer. Math. Soc.}, 81:294--319, 1956. \MR{0079377}

\bibitem{Itoex}
Kyosi It{\^o}.
\newblock Poisson point processes attached to markov processes.
\newblock In {\em Proc. 6th Berk. Symp. Math. Stat. Prob}, volume~3, pages
  225--240, 1971.

\bibitem{Jacob} Martin Jacobsen. \newblock Splitting times for {M}arkov processes and a generalised {M}arkov  property for diffusions. \newblock {\em Z. Wahrscheinlichkeitstheorie und Verw. Gebiete}, 30:27--43,  1974. \MR{0375477}

\bibitem{JeulinYor} T.~Jeulin and M.~Yor. \newblock Grossissement d'une filtration et semi-martingales: formules  explicites. \newblock In {\em S\'eminaire de {P}robabilit\'es, {XII} ({U}niv. {S}trasbourg,  {S}trasbourg, 1976/1977)}, volume 649 of {\em Lecture Notes in Math.}, pages  78--97. Springer, Berlin, 1978. \MR{0519998}

\bibitem{Jeulin} Thierry Jeulin. \newblock {\em Semi-martingales et grossissement d'une filtration}, volume 833  of {\em Lecture Notes in Mathematics}. \newblock Springer, Berlin, 1980. \MR{0604176}

\bibitem{Kallenberg} Olav Kallenberg. \newblock {\em Foundations of modern probability}. \newblock Probability and its Applications (New York). Springer-Verlag, New  York, second edition, 2002. \MR{1876169}

\bibitem{Kestenperco} Harry Kesten. \newblock The incipient infinite cluster in two-dimensional percolation. \newblock {\em Probab. Theory Related Fields}, 73(3):369--394, 1986. \MR{0859839}

\bibitem{Knightapprox1} F.~B. Knight. \newblock On the random walk and {B}rownian motion. \newblock {\em Trans. Amer. Math. Soc.}, 103:218--228, 1962. \MR{0139211}

\bibitem{Knightapprox2} F.~B. Knight. \newblock Random walks and a sojourn density process of {B}rownian motion. \newblock {\em Trans. Amer. Math. Soc.}, 109:56--86, 1963. \MR{0154337}

\bibitem{Kolm}
Andrey~N Kolmogorov.
\newblock Sulla determinazione empirica di una legge di distribuzione.
\newblock {\em Giornale dell’Istituto Italiano degli Attuari}, 4(1):83--91,
  1933.

\bibitem{Kratz} Marie~F. Kratz. \newblock Level crossings and other level functionals of stationary {G}aussian  processes. \newblock {\em Probab. Surv.}, 3:230--288, 2006. \MR{2264709}

\bibitem{LT2014}
G.~Last and H.~Thorisson.
\newblock Construction and characterization of stationary and mass-staionary
  random measures on $\mathbb{R}^d$.
\newblock 2014.
\newblock arXiv:1405.7566.

\bibitem{Last2008} G{\"u}nter Last. \newblock Modern random measures: {P}alm theory and related models. \newblock In {\em New perspectives in stochastic geometry}, pages 77--110.  Oxford Univ. Press, Oxford, 2010. \MR{2654676}

\bibitem{Last2010} G{\"u}nter Last. \newblock Stationary random measures on homogeneous spaces. \newblock {\em J. Theoret. Probab.}, 23(2):478--497, 2010. \MR{2644871}

\bibitem{LMT} G{\"u}nter Last, Peter M{\"o}rters, and Hermann Thorisson. \newblock Unbiased shifts of {B}rownian motion. \newblock {\em Ann. Probab.}, 42(2):431--463, 2014. \MR{3178463}

\bibitem{LT2009} G{\"u}nter Last and Hermann Thorisson. \newblock Invariant transports of stationary random measures and  mass-stationarity. \newblock {\em Ann. Probab.}, 37(2):790--813, 2009. \MR{2510024}

\bibitem{LT2011} G{\"u}nter Last and Hermann Thorisson. \newblock What is typical? \newblock {\em J. Appl. Probab.}, 48A(New frontiers in applied probability: a  Festschrift for Soren Asmussen):379--389, 2011. \MR{2865959}

\bibitem{Levy}
Paul L{\'e}vy.
\newblock Sur certains processus stochastiques homog{\`e}nes.
\newblock {\em Compositio mathematica}, 7:283--339, 1940.

\bibitem{Levybook} Paul L{\'e}vy. \newblock {\em Processus {S}tochastiques et {M}ouvement {B}rownien. {S}uivi  d'une note de {M}. {L}o\`eve}. \newblock Gauthier-Villars, Paris, 1948. \MR{0029120}

\bibitem{Liggett} Thomas~M. Liggett. \newblock Tagged particle distributions or how to choose a head at random. \newblock In {\em In and out of equilibrium ({M}ambucaba, 2000)}, volume~51 of  {\em Progr. Probab.}, pages 133--162. Birkh\"auser Boston, Boston, MA, 2002. \MR{1901951}

\bibitem{Maison} Bernard Maisonneuve. \newblock Exit systems. \newblock {\em Ann. Probability}, 3(3):399--411, 1975. \MR{0400417}

\bibitem{MY} Rogers Mansuy and Marc Yor. \newblock {\em Random times and enlargements of filtrations in a {B}rownian  setting}, volume 1873 of {\em Lecture Notes in Mathematics}. \newblock Springer-Verlag, Berlin, 2006. \MR{2200733}

\bibitem{MSW} P.~A. Meyer, R.~T. Smythe, and J.~B. Walsh. \newblock Birth and death of {M}arkov processes. \newblock In {\em Proceedings of the {S}ixth {B}erkeley {S}ymposium on  {M}athematical {S}tatistics and {P}robability ({U}niv. {C}alifornia,  {B}erkeley, {C}alif., 1970/1971), {V}ol. {III}: {P}robability theory}, pages  295--305. Univ. California Press, Berkeley, Calif., 1972. \MR{0405600}

\bibitem{Millarbis} P.~W. Millar. \newblock Zero-one laws and the minimum of a {M}arkov process. \newblock {\em Trans. Amer. Math. Soc.}, 226:365--391, 1977. \MR{0433606}

\bibitem{Millar} P.~W. Millar. \newblock A path decomposition for {M}arkov processes. \newblock {\em Ann. Probability}, 6(2):345--348, 1978. \MR{0461678}

\bibitem{Millarsurvey} P.~Warwick Millar. \newblock Random times and decomposition theorems. \newblock In {\em Probability ({P}roc. {S}ympos. {P}ure {M}ath., {V}ol. {XXXI},  {U}niv. {I}llinois, {U}rbana, {I}ll., 1976)}, pages 91--103. Amer. Math.  Soc., Providence, R. I., 1977. \MR{0443109}

\bibitem{Pitmantech}
J.~Pitman.
\newblock Path decomposition for conditional {B}rownian motion.
\newblock Technical Report~11, Inst. Math. Stat., Univ. of Copenhagen, 1974.

\bibitem{PitmanLevy} J.~W. Pitman. \newblock L\'evy systems and path decompositions. \newblock In {\em Seminar on {S}tochastic {P}rocesses, 1981 ({E}vanston,  {I}ll., 1981)}, volume~1 of {\em Progr. Prob. Statist.}, pages 79--110.  Birkh\"auser, Boston, Mass., 1981. \MR{0647782}

\bibitem{Pitmanexcursion} Jim Pitman. \newblock Stationary excursions. \newblock In {\em S\'eminaire de {P}robabilit\'es, {XXI}}, volume 1247 of {\em  Lecture Notes in Math.}, pages 289--302. Springer, Berlin, 1987. \MR{0941992}

\bibitem{PTpattern}
Jim Pitman and Wenpin Tang.
\newblock Patterns in random walks and {B}rownian motion.
\newblock 2014.
\newblock arXiv: 1411.0041.

\bibitem{PYl} Jim Pitman and Marc Yor. \newblock On the lengths of excursions of some {M}arkov processes. \newblock In {\em S\'eminaire de {P}robabilit\'es, {XXXI}}, volume 1655 of {\em  Lecture Notes in Math.}, pages 272--286. Springer, Berlin, 1997. \MR{1478737}

\bibitem{PSbis} A.~O. Pittenger and C.~T. Shih. \newblock Coterminal families and the strong {M}arkov property. \newblock {\em Bull. Amer. Math. Soc.}, 78:439--443, 1972. \MR{0297019}

\bibitem{PS} A.~O. Pittenger and C.~T. Shih. \newblock Coterminal families and the strong {M}arkov property. \newblock {\em Trans. Amer. Math. Soc.}, 182:1--42, 1973. \MR{0336827}

\bibitem{Revuz} D.~Revuz. \newblock Mesures associ\'ees aux fonctionnelles additives de {M}arkov. {I}. \newblock {\em Trans. Amer. Math. Soc.}, 148:501--531, 1970. \MR{0279890}

\bibitem{RY} Daniel Revuz and Marc Yor. \newblock {\em Continuous martingales and {B}rownian motion}, volume 293 of  {\em Grundlehren der Mathematischen Wissenschaften}. \newblock Springer-Verlag, Berlin, third edition, 1999. \MR{1725357}

\bibitem{Rost1} Hermann Rost. \newblock Markoff-{K}etten bei sich f\"ullenden {L}\"ochern im {Z}ustandsraum. \newblock {\em Ann. Inst. Fourier (Grenoble)}, 21(1):253--270, 1971. \MR{0299755}

\bibitem{Rost2} Hermann Rost. \newblock The stopping distributions of a {M}arkov {P}rocess. \newblock {\em Invent. Math.}, 14:1--16, 1971. \MR{0346920}

\bibitem{Sharpebook} Michael Sharpe. \newblock {\em General theory of {M}arkov processes}, volume 133 of {\em Pure  and Applied Mathematics}. \newblock Academic Press, Inc., Boston, MA, 1988. \MR{0958914}

\bibitem{SheppGaussian} L.~A. Shepp. \newblock Radon-{N}ikod\'ym derivatives of {G}aussian measures. \newblock {\em Ann. Math. Statist.}, 37:321--354, 1966. \MR{0190999}

\bibitem{Shepp} L.~A. Shepp. \newblock First passage time for a particular {G}aussian process. \newblock {\em Ann. Math. Statist.}, 42:946--951, 1971. \MR{0278375}

\bibitem{Slepian} D~Slepian. \newblock First passage time for a particular gaussian process. \newblock {\em The Annals of Mathematical Statistics}, 32(2):610--612, 1961. \MR{0125619}

\bibitem{Smirnov} N.~Smirnov. \newblock Table for estimating the goodness of fit of empirical distributions. \newblock {\em Ann. Math. Statistics}, 19:279--281, 1948. \MR{0025109}

\bibitem{HTP}
Hermann Thorisson.
\newblock Personal communications.

\bibitem{Th92} Hermann Thorisson. \newblock Construction of a stationary regenerative process. \newblock {\em Stochastic Process. Appl.}, 42(2):237--253, 1992. \MR{1176499}

\bibitem{Th1} Hermann Thorisson. \newblock Shift-coupling in continuous time. \newblock {\em Probab. Theory Related Fields}, 99(4):477--483, 1994. \MR{1288066}

\bibitem{Th1995} Hermann Thorisson. \newblock On time- and cycle-stationarity. \newblock {\em Stochastic Process. Appl.}, 55(2):183--209, 1995. \MR{1313019}

\bibitem{Th2} Hermann Thorisson. \newblock Transforming random elements and shifting random fields. \newblock {\em Ann. Probab.}, 24(4):2057--2064, 1996. \MR{1415240}

\bibitem{Th1999} Hermann Thorisson. \newblock Point-stationarity in {$d$} dimensions and {P}alm theory. \newblock {\em Bernoulli}, 5(5):797--831, 1999. \MR{1715440}

\bibitem{Thbook} Hermann Thorisson. \newblock {\em Coupling, stationarity, and regeneration}. \newblock Probability and its Applications (New York). Springer-Verlag, New  York, 2000. \MR{1741181}

\bibitem{vN} John Von~Neumann. \newblock Various techniques used in connection with random digits. \newblock {\em Applied Math Series}, 12(36-38):1, 1951. \MR{0045446}

\bibitem{Williams2} David Williams. \newblock Decomposing the {B}rownian path. \newblock {\em Bull. Amer. Math. Soc.}, 76:871--873, 1970. \MR{0258130}

\bibitem{Yor} Marc Yor. \newblock Grossissement d'une filtration et semi-martingales: th\'eor\`emes  g\'en\'eraux. \newblock In {\em S\'eminaire de {P}robabilit\'es, {XII} ({U}niv. {S}trasbourg,  {S}trasbourg, 1976/1977)}, volume 649 of {\em Lecture Notes in Math.}, pages  61--69. Springer, Berlin, 1978. \MR{0519996}

\end{thebibliography}



\ACKNO{We express our gratitude to Hermann Thorisson for helpful discussions, especially the constructive proof of Theorem \ref{mainbis}. We thank Alan Hammond for informing us the work \cite{HPS}, and Krzysztof Burdzy for pointing out the relevance of the recent work of Last and Thorisson \cite{LT2009}, and Last et al. \cite{LMT} to Question \ref{q1}. We also thank two anonymous referees for their careful reading and valuable suggestions.}


\end{document}